\newcommand{\R}{\mathbb R}
\newcommand{\C}{\mathbb C}
\newcommand{\Q}{\mathbb Q}
\newcommand{\Z}{\mathbb Z}
\newcommand{\N}{\mathbb N}
\newcommand{\hooklongrightarrow}{\lhook\joinrel\longrightarrow}
\def\H{{\mathbb H}}
\def\L{{\mathbb L}}
\def\I{{\mathbb I}}
\newtheorem{theorem}{Theorem}[section]
\newtheorem{lemma}[theorem]{Lemma}
\newtheorem{definition}[theorem]{Definition}
\newtheorem{proposition}[theorem]{Proposition}
\newtheorem{corollary}[theorem]{Corollary}
\newtheorem{remark}[theorem]{Remark}
\newenvironment{acknowledgement}[1][Acknowledgements]
{\begin{trivlist} \item[\hskip \labelsep {\bfseries #1}]}
{\end{trivlist}}
\newenvironment{question}[1][\emph{\textbf{Question:}}]
{\begin{trivlist} \item[\hskip \labelsep {\bfseries #1}]}
{\end{trivlist}}
\title{Aeppli-Bott-Chern cohomology and Deligne cohomology from a viewpoint of Harvey-Lawson's spark complex}
\author{Jyh-Haur Teh}
\date{}
\begin{document}
\maketitle

\abstract{By comparing Deligne complex and Aeppli-Bott-Chern complex, we construct a differential cohomology $\widehat{H}^*(X, *, *)$ that plays the role of Harvey-Lawson spark group $\widehat{H}^*(X, *)$,
and a cohomology $H^*_{ABC}(X; \Z(*, *))$ that plays the role of Deligne cohomology $H^*_{\mathcal{D}}(X; \Z(*))$ for every complex manifold $X$. They fit in the short exact sequence
$$ 0\rightarrow H^{k+1}_{ABC}(X; \Z(p, q)) \rightarrow \widehat{H}^k(X, p, q) \overset{\delta_1}{\rightarrow} Z^{k+1}_I(X, p, q) \rightarrow 0$$
and $\widehat{H}^{\bullet}(X, \bullet, \bullet)$ possess ring structure and refined Chern classes, acted by the complex conjugation, and if some primitive cohomology groups of $X$ vanish, there is a Lefschetz isomorphism. Furthermore, the ring structure of $H^{\bullet}_{ABC}(X; \Z(\bullet, \bullet))$ inherited from $\widehat{H}^{\bullet}(X, \bullet, \bullet)$ is compatible with the one of the analytic Deligne cohomology $H^{\bullet}(X; \Z(\bullet))$. We compute
$\widehat{H}^*(X, *, *)$ for $X$ the Iwasawa manifold and its small deformations and get a refinement of the classification
given by Nakamura.}

\section{Introduction}
The theory of differential characters was founded by Cheeger and Simons (\cite{CS, BB, B}) around 1970. It obtains intensive development in the last 20 years. Physicists realize that
differential characters can be used in the mathematical formulation of generalized abelian gauge theories(\cite{F}), and mathematicians found that they appear naturally in
many mathematical problems (\cite{HL2, HS}). The interaction between physics and mathematics stimulates lot of development in both disciplines and the theory of differential characters is extended to various generalized differential cohomologies. The article \cite{BS} of Bunke and Schick gives us a nice overview about differential cohomologies, including differential K-theory, and their relation with physics, especially with string theory.

There are various constructions of differential cohomologies (\cite{BKS, BNV, HL2}).
A particular simple construction to us was given by Harvey and Lawson through their theory of spark complexes (\cite{H, HLZ, HL1, HL2, HZ, Z}) which unifies
many known results. By applying their theory, they constructed a $\overline{\partial}$-analogue (\cite{HL1, H}) of differential characters for complex manifolds.
The Harvey-Lawson spark group $\widehat{H}^k(X, p)$ of level $p$ of a complex manifold $X$ contains the analytic Deligne cohomology $H^{k+1}_{\mathcal{D}}(X, \Z(p))$ as a subgroup and fits in
the short exact sequence
$$0 \rightarrow H^{k+1}_{\mathcal{D}}(X, \Z(p)) \rightarrow \widehat{H}^k(X, p) \rightarrow \mathcal{Z}^{k+1}_{\Z}(X, p) \rightarrow 0$$
where $\mathcal{Z}^{k+1}_{\Z}(X, p)$ is the subgroup of complex differential $(k+1)$-forms with integral periods.
Deligne cohomology group $H^{k+1}_{\mathcal{D}}(X; \Z(p))$ is usually defined by the hypercohomology group $\H^{k+1}(X, \Z(p))$ of the
Deligne complex of sheaves
$$\Z(p): 0 \rightarrow \Z \hookrightarrow \Omega^0 \overset{d}{\rightarrow} \Omega^1 \overset{d}{\rightarrow} \cdots \overset{d}{\rightarrow} \Omega^{p-1} \rightarrow 0$$
where $\Omega^k$ is the sheaf of holomorphic $k$-forms.
Recall that the Aeppli and Bott-Chern cohomology (\cite{S}) of a complex manifold $X$ can be defined by the hypercohomology of the complex of sheaves:
$$B^{\bullet}_{p, q}:0 \rightarrow \C \rightarrow \mathcal{O}\oplus \overline{\mathcal{O}} \rightarrow \Omega^1\oplus \overline{\Omega}^1
\rightarrow \cdots \rightarrow \Omega^{p-1}\oplus \overline{\Omega}^{p-1}\rightarrow \overline{\Omega}^p \rightarrow \cdots \rightarrow
\overline{\Omega}^{q-1} \rightarrow 0$$ where $p\geq q$ and $\overline{\Omega}^k$ is the sheaf of anti-holomorphic $k$-forms. We have
$$H^{p, q}_A(X; \C)\cong \H^{p+q+1}(X, B^{\bullet}_{p+1, q+1}) \mbox{ and } H^{p, q}_{BC}(X; \C)\cong \H^{p+q}(X, B^{\bullet}_{p, q})$$
By this similarity to the definition of Deligne cohomology, it is natural to ask the following question

\begin{question}
Is there a differential cohomology that plays the role
of the Harvey-Lawson spark group $\widehat{H}(X, p)$, and the hypercohomology groups of the complex of sheaves
$$0 \rightarrow \Z \rightarrow \mathcal{O}\oplus \overline{\mathcal{O}} \rightarrow \Omega^1\oplus \overline{\Omega}^1
\rightarrow \cdots \rightarrow \Omega^{p-1}\oplus \overline{\Omega}^{p-1}\rightarrow \overline{\Omega}^p \rightarrow \cdots \rightarrow
\overline{\Omega}^{q-1} \rightarrow 0$$ that play the role of Deligne cohomology?
\end{question}

This is the motivation of this paper. We answer this question affirmative by constructing cohomology $\widehat{H}^*(X, p, q)$ and $H^*_{ABC}(X; \Z(p, q))$ for every complex manifold $X$, and integers $p, q\geq 0$
that fit in the short exact sequence:
$$ 0\rightarrow H^{k+1}_{ABC}(X; \Z(p, q)) \rightarrow \widehat{H}^k(X, p, q) \overset{\delta_1}{\rightarrow} Z^{k+1}_I(X, p, q) \rightarrow 0$$

The cohomology $\widehat{H}^{\bullet}(X, \bullet, \bullet)$ possess a ring structure and refined Chern classes, acted by the complex conjugation, and if some primitive cohomology groups
of $X$ vanish, there is a Lefschetz isomorphism. Furthermore, the ring structure of $H^{\bullet}_{ABC}(X; \Z(\bullet, \bullet))$ inherited from $\widehat{H}^{\bullet}(X, \bullet, \bullet)$ is compatible with the one of the analytic Deligne cohomology $H^{\bullet}(X; \Z(\bullet))$.
We compute
$\widehat{H}^{\bullet}(X, \bullet, \bullet)$ for $X$ the Iwasawa manifold and its small deformations and get a refinement of the classification
given by Nakamura. Such finer classification is different from the one given by Angella \cite{A}.

The paper is organized as follow. In section 2, we review Harvey and Lawson's theory of spark complexes and use it to construct $\widehat{H}^*(X, p, q)$. We show that the above mentioned sequence
is short exact, give a $3\times 3$-grid that relates Griffiths intermediate Jacobian and Hodge group and prove a Lefschetz property. In section 3, we establish a ring structure on $\widehat{H}^{\bullet}(X; \bullet, \bullet)$. In section 4, we construct refined Chern classes for
complex vector bundles and prove a Whitney product formula. Furthermore, for holomorphic vector bundles, we show that their refined Chern classes can be defined on $\widehat{H}^{\bullet}_{ABC}(X; \Z(\bullet, \bullet))$
and the total refined Chern class defines a natural map from holomorphic K-theory to $\widehat{H}^{\bullet}(X; \Z(\bullet, \bullet))$.
In section 5, we compute the $ABC$-cohomology of the Iwasawa manifold and its small deformations, and give a refinement 
of the classification given by Nakamura (\cite{Nak}).

\begin{acknowledgement}
The author thanks Siye Wu for his interest in this work and Taiwan National Center for Theoretical Sciences(Hsinchu) for proving a nice working environment.
\end{acknowledgement}
\section{Harvey-Lawson spark groups }
We recall the construction of spark groups given by Harvey and Lawson in \cite{HL1}.

\begin{definition}(Spark complexes)
Suppose that $F^{\bullet}=\oplus_{i\geq 0}F^i,
E^{\bullet}=\oplus_{i\geq 0}E^i, I^{\bullet}=\oplus_{i\geq 0}I^i$
are cochain complexes and $\Psi:I \rightarrow F$ is a morphism of
cochain complexes, $E^{\bullet} \hookrightarrow F^{\bullet}$ is an embedding with the
following properties:
\begin{enumerate}
\item $\Psi(I^k)\cap E^k=\{0\}$ for all $k>0$,
\item $\Psi:I^0 \rightarrow F^0$ is injective,
\item the embedding induces an isomorphism $H^k(E^{\bullet}) \rightarrow H^k(F^{\bullet})$.
\end{enumerate}
Then $\mathscr{S}=(F^{\bullet}, E^{\bullet}, I^{\bullet})$ is called a spark complex.
\end{definition}

\begin{definition}(Spark groups)
Given a spark complex $\mathscr{S}=(F^{\bullet}, E^{\bullet}, I^{\bullet})$, a spark of degree $k$ is a pair $(a, r)\in F^k\oplus I^{k+1}$ which satisfies the spark equation
$$\left\{
       \begin{array}{ll}
        da=e-\Psi(r) \mbox{ where }e\in E^{k+1}, \\
        dr=0.
        \end{array}
\right.$$

Let $\mathscr{S}^k(F^{\bullet}, E^{\bullet}, I^{\bullet})$ be the collection of all sparks of degree $k$ in $(F^{\bullet}, E^{\bullet}, I^{\bullet})$. Two sparks $(a, r), (a', r')$ of degree $k$ are
equivalent if there exists a pair $(b, s)\in F^{k-1}\oplus I^k$ such that
$$\left\{
\begin{array}{ll}
a-a'=db+\Psi(s),\\
r-r'=-ds.
\end{array}
\right.$$
We write $\widehat{H}^k(\mathscr{S})=S^k(F^{\bullet}, E^{\bullet}, I^{\bullet})/\sim$ for the group of equivalence spark classes of degree $k$.
\end{definition}

Let $X$ be a complex manifold of complex dimension $m$. We write $\mathcal{E}^{p, q}_{cpt}(X)$ for the space of $(p, q)$-forms with compact support on $X$. The space of currents of degree
$(p, q)$ on $X$ is the topological dual space $D'^{p, q}(X):=\{\mathcal{E}^{m-p, m-q}_{cpt}\}'$. We write

$$D'^k(X, p, q)=\underset{\underset{i_1<p}{i_1+j_1=k}}{\bigoplus} D'^{i_1, j_1}(X) \bigoplus \underset{\underset{j_2<q}{i_2+j_2=k}}{\bigoplus} D'^{i_2, j_2}(X)$$
and the counterpart of forms $\mathcal{E}^k_{cpt}(X, p, q)$ is defined similarly.

Define $d_{p, q}:D'^{k-1}(X, p, q) \rightarrow D'^k(X, p, q)$ by
$$d_{p, q}(a, b)=(\pi_p da, \pi'_q db)$$
where
$$\pi_p:D'^k(X) \rightarrow \bigoplus_{\overset{i_1+j_1=k}{i_1<p}}D'^{(i_1,j_1)}(X), \ \
\pi'_q:D'^{k}(X) \rightarrow \bigoplus_{\overset{i_2+j_2=k}{j_2<q}}D'^{(i_2, j_2)}(X)$$
are the natural projections. It is easy to see that $d^2_{p, q}=0$ and hence $(D^{\bullet}(X, p, q), d_{p, q})$ is a cochain complex.
Let $I^k(X)$ be the space of locally integral currents of degree $k$ on $X$.
Define
$\Psi_{p, q}:I^k(X) \rightarrow D'^{k}(X, p, q)$ by
$$\Psi_{p, q}(r)=(\pi_p(r), \pi'_q(r))$$

\begin{proposition}
Let $X$ be a complex manifold of dimension $m$. For $\alpha\in \mathcal{E}^{p, q}(X), \beta\in \mathcal{E}^{m-p, m-q}_{cpt}(X)$, define
$\alpha(\beta):=\int_X\alpha \wedge \beta$. Then $\mathcal{E}^{p, q}(X)$ may be considered as a subspace of $D'^{p, q}(X)$.
With maps and differentials defined above, the triple
$(D'^{\bullet}(X, p, q), \mathcal{E}^{\bullet}(X, p, q), I^{\bullet}(X))$ forms a spark complex.
\end{proposition}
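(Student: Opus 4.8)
The plan is to check the three conditions in the definition of a spark complex, after first recording that the data genuinely forms a commuting diagram of cochain complexes. For the latter: the map $\alpha\mapsto(\beta\mapsto\int_X\alpha\wedge\beta)$ embeds $\mathcal{E}^{\bullet}(X)$ into $D'^{\bullet}(X)$ (injectivity is the classical fact that a smooth form pairing to zero against every compactly supported form of complementary bidegree is zero), and it clearly commutes with the bidegree projections $\pi_p,\pi'_q$, hence with $d_{p,q}$; that the locally integral currents $I^{\bullet}(X)$ form a subcomplex under $d$ is Federer's boundary--rectifiability theorem. The key formal identity is $\pi_p\,d\,(1-\pi_p)=0$ on currents, valid because $d$ of a current concentrated in bidegrees with first index $\ge p$ is again concentrated there; this yields at once $d_{p,q}^{2}=0$ and the fact that $\Psi_{p,q}$ is a chain map, and likewise with $\pi'_q$. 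Condition (2) is then immediate, since on degree-zero currents the projections $\pi_p,\pi'_q$ act as the identity, so $\Psi_{p,q}$ restricted to $I^{0}(X)$ is essentially the diagonal embedding and in particular injective.

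The first substantive point is condition (1). Suppose $r\in I^{k}(X)$ with $k>0$ and $\Psi_{p,q}(r)=(\pi_p r,\pi'_q r)\in\mathcal{E}^{k}(X,p,q)$, i.e. both $\pi_p r$ and $\pi'_q r$ are currents given by smooth forms; I will show they are zero, so that $\Psi_{p,q}(r)=0$. Each bidegree component of $r$, and hence $\pi_p r$, is a current of locally finite mass whose mass measure is dominated, up to a dimensional constant, by $\|r\|$; since $r$ is a locally integral current of degree $k>0$, $\|r\|$ is carried by a rectifiable set of real dimension $2m-k<2m$ and is therefore singular with respect to Lebesgue measure on $X$. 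But the mass measure of a smooth form is absolutely continuous with respect to Lebesgue measure, and a measure that is simultaneously singular and absolutely continuous with respect to Lebesgue measure vanishes; hence $\pi_p r=0$, and symmetrically $\pi'_q r=0$. Equivalently, one may invoke the standard de Rham--Federer regularity statement that a locally integrally flat current of positive degree which is representable by a smooth form is zero.

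It remains to prove condition (3): the embedding $\mathcal{E}^{\bullet}(X,p,q)\hookrightarrow D'^{\bullet}(X,p,q)$ is a quasi-isomorphism, a truncated version of de Rham's theorem. Both complexes split, compatibly with the embedding, into the direct sum of two subcomplexes matching the two summands of $D'^{k}(X,p,q)$: the ``first-index $<p$'' piece is the total complex of the column-truncated Dolbeault bicomplex $\{\mathcal{E}^{i,\bullet}(X)\}_{i<p}$ (resp. $\{D'^{i,\bullet}(X)\}_{i<p}$) with the total differential that discards the $\partial$-component leaving column $p-1$, and the ``second-index $<q$'' piece is the symmetric object built from the rows $j<q$. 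It suffices to handle each piece, which I will do by comparing spectral sequences. Filter the first piece by columns; the associated graded differential is $\bar\partial$, so $E_{1}$ is $\bigoplus_{i<p}H^{\bullet}_{\bar\partial}(\mathcal{E}^{i,\bullet}(X))$ resp. $\bigoplus_{i<p}H^{\bullet}_{\bar\partial}(D'^{i,\bullet}(X))$; both equal $\bigoplus_{i<p}H^{\bullet}(X,\Omega^{i})$ via the Dolbeault--Grothendieck resolution (for currents one uses elliptic regularity for $\bar\partial$ together with softness of the sheaves of currents), and the embedding induces the identity on $E_{1}$. Since the filtration is finite both spectral sequences converge, so the embedding is a quasi-isomorphism on this piece; the other piece is treated identically with $\partial$ and $\overline{\Omega}^{\,j}$ in place of $\bar\partial$ and $\Omega^{i}$ (or deduced by complex conjugation). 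Assembling the three conditions proves that $(D'^{\bullet}(X,p,q),\mathcal{E}^{\bullet}(X,p,q),I^{\bullet}(X))$ is a spark complex.

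I expect the main obstacle to be condition (1), since it is the one step that leaves homological algebra for genuine geometric measure theory: one needs the regularity fact that smoothness forces a positive-degree integral current to vanish. Condition (3), though it is the conceptual core, reduces to a routine finite spectral-sequence comparison once the splitting into truncated Dolbeault complexes is observed.
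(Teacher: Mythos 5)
Your proof is correct and establishes exactly the three facts that the paper's own proof simply cites without argument: the quasi-isomorphism $\mathcal{E}^{\bullet}(X,p,q)\hookrightarrow D'^{\bullet}(X,p,q)$ (which the paper calls ``well known'' and you prove by the standard column-filtration spectral-sequence comparison of the truncated Dolbeault complexes of forms and of currents), the injectivity of $\Psi$ in degree zero, and the disjointness condition (1), for which your singular-versus-absolutely-continuous mass-measure argument is precisely the content of the cited Appendix B of Harvey--Lawson. There are no gaps; your write-up is the same proof with the outsourced details filled in.
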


\begin{proof}
It is well known that the inclusion map $\mathcal{E}^*(X, p, q) \hookrightarrow D'^*(X, p, q)$ is a quasi-isomorphism and $\Psi:I^0(X) \rightarrow D'^0(X, p, q)$ is
injective. The fact that $\mathcal{E}^*(X, p, q)\cap \Psi(I^*(X))=\{0\}$ follows from \cite[Appendix B]{HL1}.
\end{proof}

\begin{definition}
For a complex manifold $X$, the $k$-th Harvey-Lawson spark group of level $(p, q)$ is the spark group
$$\widehat{H}^k(X, p, q):=\widehat{H}^k(D^{\bullet}(X, p, q), \mathcal{E}^{\bullet}(X, p, q), I^{\bullet}(X))$$
\end{definition}

\begin{proposition}\label{conjugation iso}
On a complex manifold $X$, the complex conjugation on currents induced by the complex structure of $X$ induces a map $\widehat{H}^k(X, p, q) \rightarrow \widehat{H}^k(X, q, p)$ defined by
$$(a, b, r) \mapsto (\overline{b}, \overline{a}, r)$$ which is an isomorphism.
\end{proposition}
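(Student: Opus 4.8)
The plan is to check, in order, that the rule $(a,b,r)\mapsto(\overline b,\overline a,r)$ (i) lands in the level $(q,p)$ spark complex, (ii) sends sparks to sparks, (iii) descends to equivalence classes, (iv) is additive, and (v) is inverted by the analogous rule at level $(q,p)$. I will use only the elementary properties of the complex conjugation $T\mapsto\overline T$ on currents: it is additive, it is an involution, it commutes with $d$ (equivalently, it interchanges $\partial$ and $\overline\partial$), it sends a current of bidegree $(i,j)$ to one of bidegree $(j,i)$, and it fixes every real current. Since $\pi_s$ keeps the components of holomorphic degree $<s$ while $\pi'_s$ keeps those of anti-holomorphic degree $<s$, conjugation exchanges the two, giving the identities $\overline{\pi_s T}=\pi'_s\overline T$ and $\overline{\pi'_s T}=\pi_s\overline T$ for every $s$, which are the workhorses of the argument.

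For (i): if $a$ has all its bidegrees $(i_1,j_1)$ with $i_1<p$, then $\overline a$ has all its bidegrees $(j_1,i_1)$ with anti-holomorphic index $<p$, hence $\overline a$ lies in the $p$-truncated summand of $D'^k(X,q,p)$; symmetrically $\overline b$ lies in its $q$-truncated summand, so $(\overline b,\overline a)\in D'^k(X,q,p)$, and the same observation applied to smooth forms gives $(\overline{e_2},\overline{e_1})\in\mathcal E^{k+1}(X,q,p)$ whenever $(e_1,e_2)\in\mathcal E^{k+1}(X,p,q)$. For (ii): writing the spark equation for $(a,b,r)$ as $\pi_p\,da=e_1-\pi_p r$, $\pi'_q\,db=e_2-\pi'_q r$, $dr=0$, and applying $d_{q,p}$,
$$d_{q,p}(\overline b,\overline a)=(\pi_q\,d\overline b,\ \pi'_p\,d\overline a)=(\overline{\pi'_q\,db},\ \overline{\pi_p\,da})=(\overline{e_2},\overline{e_1})-(\overline{\pi'_q r},\ \overline{\pi_p r}).$$
Because $r$ is a locally integral, hence real, current we have $\overline r=r$, so $(\overline{\pi'_q r},\overline{\pi_p r})=(\pi_q r,\pi'_p r)=\Psi_{q,p}(r)$; together with $dr=0$ this says $(\overline b,\overline a,r)$ is a spark of degree $k$ at level $(q,p)$.

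For (iii): if $(a,b,r)\sim(a',b',r')$ via $((c_1,c_2),s)\in D'^{k-1}(X,p,q)\oplus I^k(X)$, then $a-a'=\pi_p\,dc_1+\pi_p s$, $b-b'=\pi'_q\,dc_2+\pi'_q s$, $r-r'=-ds$; conjugating and using the same identities together with $\overline s=s$ yields $(\overline b,\overline a)-(\overline{b'},\overline{a'})=d_{q,p}(\overline{c_2},\overline{c_1})+\Psi_{q,p}(s)$ and $r-r'=-ds$, so the conjugated sparks are equivalent. Claim (iv) is immediate since conjugation and the projections are additive, and (v) holds because the composite of the two conjugation maps is $(a,b,r)\mapsto(\overline{\overline a},\overline{\overline b},r)=(a,b,r)$, conjugation being an involution on currents; hence the map $\widehat H^k(X,p,q)\to\widehat H^k(X,q,p)$ is a group isomorphism.

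The argument is essentially bookkeeping, so there is no deep obstacle; the one point that must not be slurred over is that $r$ and $s$ are \emph{not} conjugated in the definition of the map, and it is precisely the reality of integral currents (so that $\overline r=r$ and $\overline s=s$) that makes $\Psi_{q,p}(r)$ and $\Psi_{q,p}(s)$ come out correctly. The other care needed is keeping straight the two truncation directions $\pi_\bullet$ versus $\pi'_\bullet$ when matching bidegrees under conjugation.
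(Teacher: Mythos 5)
Your proposal is correct and follows essentially the same route as the paper's proof: the key identities $\overline{\pi_s T}=\pi'_s\overline{T}$, $\overline{\pi'_s T}=\pi_s\overline{T}$, the reality of integral currents giving $\overline{r}=r$, and the involutivity of conjugation to produce the inverse. You are in fact more careful than the paper on two points — you verify compatibility with the equivalence relation explicitly, and you correctly note that the composite is the identity (the paper's phrase ``minus identity'' appears to be a slip).
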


\begin{proof}
Note that $\overline{\pi_p(a)}=\pi'_p(\overline{a})$ and $\overline{\pi'_q(b)}=\pi_q(\overline{b})$. If $d_{p, q}(a, b)=(e_1, e_2)-(\pi_p(r),
\pi_q'(r))$, then $\overline{d_{p, q}(a, b)}=(\pi'_pd\overline{a}, \pi_qd\overline{b})=(\overline{e}_1, \overline{e}_2)- (\pi'_p(\overline{r}),
\pi_q(\overline{r}))$, so $d_{q, p}(\overline{b}, \overline{a})=(\pi_qd\overline{b}, \pi_p'd\overline{a})=(\overline{e}_2,
\overline{e}_1)-(\pi_q(\overline{r}), \pi_p'(\overline{r}))$ this implies that $[(\overline{b}, \overline{a}, \overline{r})]\in \widehat{H}^k(X, q, p)$.
Since $r$ is a locally integral current, it is real, hence $\overline{r}=r$.
This map is well defined and applies it twice we get the minus identity which shows that it is an isomorphism.
\end{proof}

From the general theory of spark complexes \cite[Prop 1.8]{HL1}, we have the following $3\times 3$ commutative grid.
\begin{proposition}\label{3x3}
There is a 3$\times$3 commutative grid of exact sequences associated to the spark complex $(D'^{\bullet}(X, p, q), \mathcal{E}^{\bullet}(X, p, q), I^{\bullet}(X))$
$$\xymatrix{ & 0 \ar[d] & 0 \ar[d] & 0 \ar[d] &\\
0 \ar[r]& \frac{H^k(D'^{\bullet}(X, p, q))}{H^k_I(D'^{\bullet}(X, p, q))} \ar[r] \ar[d] & \widehat{H}^k_E(X, p, q) \ar[r] \ar[d] & d_{p, q}\mathcal{E}^k(X, p, q) \ar[r] \ar[d] & 0\\
0\ar[r]& H^k(G) \ar[r] \ar[d] & \widehat{H}^k(X, p, q) \ar[r]^{\delta_1} \ar[d]^{\delta_2} & Z^{k+1}_I(X, p, q) \ar[r] \ar[d] & 0\\
0 \ar[r]& Ker^{k+1}((\Psi_{p, q})_*) \ar[r] \ar[d] & H^{k+1}(I^{\bullet}(X)) \ar[r]^{(\Psi_{p, q})_*} \ar[d] & H^{k+1}_I(\mathcal{E}^{\bullet}(X, p, q)) \ar[r] \ar[d] & 0\\
& 0 & 0 & 0 &\\}
$$ where $Z^{k+1}_I(X, p, q)$ consists of pairs $(e_1, e_2)\in \mathcal{E}^{k+1}(X, p, q)$ that are closed under the differential $d_{p, q}$ and have integral periods, i.e., $[(e_1, e_2)]=(\Psi_{p, q})_*(\rho)$ in $H^{k+1}(D'^{\bullet}(X, p, q))$ for some $\rho\in
H^{k+1}(I^{\bullet}(X))$. Furthermore, $\widehat{H}^k_E(X, p, q)= \mbox{ kernel of } \delta_2$, and $G$ is the cone complex formed by $\Psi_{p, q}:I^{\bullet}(X) \rightarrow D'^{\bullet}(X, p, q)$.
\end{proposition}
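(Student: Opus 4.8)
This statement is an instance of the general structure theorem for spark complexes, \cite[Prop.~1.8]{HL1}, applied to the particular spark complex $(D'^{\bullet}(X, p, q), \mathcal{E}^{\bullet}(X, p, q), I^{\bullet}(X))$ constructed in the preceding Proposition; so the plan is to invoke that abstract result and translate its terms into the concrete objects of the grid. Write $F^{\bullet}=D'^{\bullet}(X,p,q)$, $E^{\bullet}=\mathcal{E}^{\bullet}(X,p,q)$, $I^{\bullet}=I^{\bullet}(X)$, $\Psi=\Psi_{p,q}$, so that the three spark axioms hold by that Proposition.

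First I would record the two structural maps. The map $\delta_1$ sends a spark class $[(a,b,r)]$ to its curvature $(e_1,e_2)\in\mathcal{E}^{k+1}(X,p,q)$, determined by $d_{p,q}(a,b)=(e_1,e_2)-\Psi_{p,q}(r)$; since $\Psi_{p,q}$ is a chain map (one checks $\pi_p\,d\,\pi_p=\pi_p\,d$ and the analogous identity for $\pi'_q$, whence $d_{p,q}\Psi_{p,q}=\Psi_{p,q}d$), the curvature is $d_{p,q}$-closed and its class equals $(\Psi_{p,q})_*[r]$, so it lies in $Z^{k+1}_I(X,p,q)$. The map $\delta_2$ sends $[(a,b,r)]\mapsto[r]\in H^{k+1}(I^{\bullet}(X))$, well defined because $dr=0$ and an equivalence changes $r$ by a coboundary. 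One defines $\widehat{H}^k_E(X,p,q):=\ker\delta_2$.

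Next I would check the six short exact sequences. For the middle row, surjectivity of $\delta_1$ uses the quasi-isomorphism $\mathcal{E}^{\bullet}(X,p,q)\hookrightarrow D'^{\bullet}(X,p,q)$: a pair in $Z^{k+1}_I$ has class $(\Psi_{p,q})_*[r]$ in $H^{k+1}(D'^{\bullet})$ for some closed integral $r$, so it differs from $\Psi_{p,q}(r)$ by $d_{p,q}$ of a current, giving a spark; the kernel of $\delta_1$ is the set of spark classes of zero curvature, which modulo equivalence is exactly $H^k(G)$ for $G$ the mapping cone of $\Psi_{p,q}$. The bottom row is the tautological factorization of $(\Psi_{p,q})_*$ through its image, once $H^{k+1}_I(\mathcal{E}^{\bullet}(X,p,q))$ is identified with that image via the quasi-isomorphism. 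The left column is the segment $0\to\operatorname{coker}(\Psi_{p,q})_*\to H^k(G)\to\operatorname{Ker}^{k+1}((\Psi_{p,q})_*)\to0$ of the cone long exact sequence, after identifying $H^k(D'^{\bullet})/H^k_I(D'^{\bullet})$ with $\operatorname{coker}(\Psi_{p,q})_*$. The right column is immediate from the definitions, and the middle column is the defining sequence of $\widehat{H}^k_E$, with surjectivity of $\delta_2$ again from the quasi-isomorphism (every class of $H^{k+1}(I^{\bullet}(X))$ has $\Psi_{p,q}$-image cohomologous to a form). The top row is the restriction of the middle row to $\ker\delta_2$, using that the curvature of an $E$-spark is exact; commutativity is a chase on representatives.

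The only non-formal inputs are the three spark axioms — the quasi-isomorphism, the transversality $\mathcal{E}^{\bullet}(X,p,q)\cap\Psi_{p,q}(I^{\bullet}(X))=\{0\}$, and injectivity of $\Psi_{p,q}$ in degree $0$ — all proved in the preceding Proposition; everything else is the homological algebra already carried out in \cite[Prop.~1.8]{HL1}. I therefore do not expect a serious obstacle: the only real task is bookkeeping, namely matching the abstract entries of that result with $Z^{k+1}_I(X,p,q)$, $H^{k+1}_I(\mathcal{E}^{\bullet}(X,p,q))$, $\operatorname{Ker}^{k+1}((\Psi_{p,q})_*)$ and the cone complex $G$ as named here, which I would spell out to keep the argument self-contained.
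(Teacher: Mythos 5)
Your proposal is correct and follows exactly the route the paper takes: the paper offers no proof beyond the sentence that the grid follows from the general theory of spark complexes, citing \cite[Prop 1.8]{HL1}, which is precisely the abstract result you invoke and then translate into the concrete terms $Z^{k+1}_I(X,p,q)$, $H^{k+1}_I(\mathcal{E}^{\bullet}(X,p,q))$, $\operatorname{Ker}^{k+1}((\Psi_{p,q})_*)$ and the cone $G$. Your write-up is in fact more detailed than the paper's, but the underlying argument is identical.
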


\subsection{Aeppli-Bott-Chern cohomology as a hypercohomology}
Fix a complex manifold $X$. Let $\Omega^k, \overline{\Omega}^k$ be the sheaves of holomorphic $k$-forms and anti-holomorphic $k$-forms on $X$ respectively.
Recall that the Aeppli and Bott-Chern cohomology for a complex manifold $X$ can be defined by the hypercohomology of the complex of sheaves: if $q\geq p$,
$$B^{\bullet}_{p, q}:0 \rightarrow \C \rightarrow \mathcal{O}\oplus \overline{\mathcal{O}} \rightarrow \Omega^1\oplus \overline{\Omega}^1
\rightarrow \cdots \rightarrow \Omega^{p-1}\oplus \overline{\Omega}^{p-1}\rightarrow \overline{\Omega}^p \rightarrow \cdots \rightarrow
\overline{\Omega}^{q-1} \rightarrow 0,$$ we have
$$H^{p, q}_A(X; \C)\cong \H^{p+q+1}(X, B^{\bullet}_{p+1, q+1}) \mbox{ and }
H^{p, q}_{BC}(X; \C)\cong \H^{p+q}(X, B^{\bullet}_{p, q}).$$

Modifying accordingly we have the case for $p\geq q$.

\begin{definition}
Let $\Omega^{\bullet<p, \bullet<q}$ be the complex of sheaves
$$\mathcal{O}\oplus \overline{\mathcal{O}} \rightarrow \Omega^1\oplus \overline{\Omega}^1
\rightarrow \cdots \rightarrow \Omega^{p-1}\oplus \overline{\Omega}^{p-1}\rightarrow \overline{\Omega}^p \rightarrow \cdots \rightarrow
\overline{\Omega}^{q-1} \rightarrow 0$$ if $p<q$, and the complex of sheaves:
$$\mathcal{O}\oplus \overline{\mathcal{O}} \rightarrow \Omega^1\oplus \overline{\Omega}^1
\rightarrow \cdots \rightarrow \Omega^{q-1}\oplus \overline{\Omega}^{q-1}\rightarrow {\Omega}^q \rightarrow \cdots \rightarrow
{\Omega}^{p-1} \rightarrow 0$$ if $p\geq q$
\end{definition}

Similar to the definition of Deligne cohomology, we define Aeppli-Bott-Chern cohomology as following.

\begin{definition}
The Aeppli-Bott-Chern cohomology $H^k_{ABC}(X; \Z(p, q))$ is defined to be the hypercohomology group $\H^k(X, \Z\rightarrow \Omega^{\bullet<p, \bullet<q})$.
If without confusion, we will just call this cohomology the ABC cohomology.
\end{definition}

\begin{proposition}
There is an isomorphism
$$H^k_{ABC}(X; \Z(p, q))\cong H^{k-1}(Cone(I^{\bullet}(X)\overset{\Psi_{p, q}}{\rightarrow} D'^{\bullet}(X, p, q)))$$ where $Cone(I^{\bullet}(X)\overset{\Psi_{p, q}}{\rightarrow} D'^{\bullet}(X, p, q))$ is the cone complex associated to the cochain morphism $\Psi_{p, q}:I^{\bullet}(X) \rightarrow D'^{\bullet}(X, p, q)$.
\end{proposition}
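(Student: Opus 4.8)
The plan is to realize both sides as hypercohomology of complexes of fine or soft sheaves on $X$ and then identify the two up to a degree shift, following the argument Harvey and Lawson gave for Deligne cohomology in \cite{HL1}. Write $\mathcal D'^{\bullet}(X,p,q)$ for the complex of sheaves $U\mapsto D'^{\bullet}(U,p,q)$ with differential $d_{p,q}$ (currents form sheaves), $\mathcal I^{\bullet}$ for the complex of sheaves of locally integral currents; $\Psi_{p,q}$ sheafifies to $\mathcal I^{\bullet}\to\mathcal D'^{\bullet}(X,p,q)$. Each $\mathcal D'^{k}(X,p,q)$ is fine (a $C^{\infty}$-module), and, as in \cite{HL1}, $\mathcal I^{\bullet}$ is a soft resolution of the constant sheaf $\underline{\Z}$. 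Hence $\Gamma(X,-)$ computes hypercohomology of all these sheaves, and since the mapping cone $Cone(\mathcal I^{\bullet}\to\mathcal D'^{\bullet}(X,p,q))$ has terms $\mathcal I^{n+1}\oplus\mathcal D'^{n}(X,p,q)$ that are again $\Gamma(X,-)$-acyclic, its global sections $Cone(I^{\bullet}(X)\xrightarrow{\Psi_{p,q}}D'^{\bullet}(X,p,q))$ compute its hypercohomology. The decisive step I would carry out is a \textbf{key lemma}: the inclusion of complexes of sheaves $\Omega^{\bullet<p,\bullet<q}\hookrightarrow\mathcal D'^{\bullet}(X,p,q)$, sending a (anti)holomorphic form to the current it represents, is a quasi-isomorphism.

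To prove the lemma, take $p\ge q$ (the case $p<q$ is symmetric, or follows from Proposition~\ref{conjugation iso}). The differential $d_{p,q}$ does not mix the two summands, so $\mathcal D'^{\bullet}(X,p,q)$ is the external direct sum of $A^{\bullet}:=\bigl(\bigoplus_{i<p}\mathcal D'^{i,\bullet},\,\pi_p d\bigr)$ and $B^{\bullet}:=\bigl(\bigoplus_{j<q}\mathcal D'^{\bullet,j},\,\pi'_q d\bigr)$, and correspondingly $\Omega^{\bullet<p,\bullet<q}=[\mathcal O\to\Omega^1\to\cdots\to\Omega^{p-1}]\oplus[\overline{\mathcal O}\to\overline{\Omega}^1\to\cdots\to\overline{\Omega}^{q-1}]$, with the inclusion respecting this splitting. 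Now $A^{\bullet}$ is the total complex of the bicomplex $(\mathcal D'^{i,j})_{0\le i\le p-1}$ with vertical differential $\bar\partial$ and horizontal differential $\partial$, truncated to zero out of the column $i=p-1$. Since each column $(\mathcal D'^{i,\bullet},\bar\partial)$ is a resolution of $\Omega^i_X$ by the Dolbeault–Grothendieck lemma for currents, the column-filtration spectral sequence has $E_1$-page concentrated in the bottom row, equal to $\mathcal O\xrightarrow{\partial}\Omega^1\to\cdots\to\Omega^{p-1}\to 0$, and its edge morphism realizes precisely $[\mathcal O\to\cdots\to\Omega^{p-1}]\hookrightarrow A^{\bullet}$ as a quasi-isomorphism. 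The conjugate computation gives $[\overline{\mathcal O}\to\cdots\to\overline{\Omega}^{q-1}]\hookrightarrow B^{\bullet}$ a quasi-isomorphism, and summing proves the lemma.

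Finally, the square with horizontal maps $\underline{\Z}\hookrightarrow\mathcal O\oplus\overline{\mathcal O}$, $n\mapsto(n,n)$, and $\Psi_{p,q}$, and vertical maps $\underline{\Z}\hookrightarrow\mathcal I^{0}$ and $\Omega^{\bullet<p,\bullet<q}\hookrightarrow\mathcal D'^{\bullet}(X,p,q)$, commutes: on a constant integer $n$ both composites give the pair of constant $(0,0)$-currents $(n,n)$, since $\pi_p,\pi'_q$ fix bidegree $(0,0)$. Passing to mapping cones and using the lemma together with $\mathcal I^{\bullet}\simeq\underline{\Z}$ yields $Cone(\mathcal I^{\bullet}\to\mathcal D'^{\bullet}(X,p,q))\simeq Cone(\underline{\Z}\to\Omega^{\bullet<p,\bullet<q})$; with the convention $Cone(f\colon M^{\bullet}\to N^{\bullet})^n=M^{n+1}\oplus N^n$ the latter is exactly $(\underline{\Z}\to\Omega^{\bullet<p,\bullet<q})[1]$, i.e.\ the complex defining $H^{\bullet}_{ABC}$ shifted by one. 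Taking hypercohomology,
\[
H^{k-1}\bigl(Cone(I^{\bullet}(X)\xrightarrow{\Psi_{p,q}}D'^{\bullet}(X,p,q))\bigr)\;\cong\;\H^{k-1}\bigl(X,(\underline{\Z}\to\Omega^{\bullet<p,\bullet<q})[1]\bigr)\;\cong\;\H^{k}(X,\underline{\Z}\to\Omega^{\bullet<p,\bullet<q})\;=\;H^{k}_{ABC}(X;\Z(p,q)).
\]
The main obstacle is the key lemma: one must keep careful track of the nonstandard truncations in $\Omega^{\bullet<p,\bullet<q}$ and $D'^{\bullet}(X,p,q)$ and verify that the bicomplex argument produces the truncated \emph{holomorphic} de Rham complex rather than $F^p\Omega^{\bullet}$ or the full de Rham complex; the shift-and-degree bookkeeping in the last display also needs attention. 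Everything else is the standard ``currents compute hypercohomology'' formalism.
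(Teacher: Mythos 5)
Your proof is correct and takes essentially the same route as the paper: the paper likewise resolves $\Z\to\Omega^{\bullet<p,\bullet<q}$ by $I^{\bullet}$ and the current complexes $D'^{n_i,\bullet}\oplus D'^{\bullet,m_i}$ (your key lemma is precisely the acyclic-resolution statement it feeds into \cite[Proposition A.3]{HL1}) and then identifies the hypercohomology with the cohomology of the cone on global sections. The only difference is presentational: you carry out the column/row spectral-sequence degeneration and the cone/shift bookkeeping explicitly, where the paper delegates these to the Dolbeault--Grothendieck lemma for currents and the Harvey--Lawson cone formalism.
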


\begin{proof}
We prove only the case $q\geq p$. There are acyclic resolutions
$$\Z \rightarrow I^{\bullet} \mbox{ and } \Omega^{n_i}\oplus \overline{\Omega}^{m_i} \rightarrow D'^{n_i, \bullet}\oplus D'^{\bullet, m_i}$$
Define $\eta_k:I^k \rightarrow D'^{k, 0}\oplus D'^{0, k}$ by
$$\eta_k(r)=(\Pi_{k, 0}(r), \Pi_{0, k}(r))$$ where
$\Pi_{i, j}:I^k \rightarrow D'^{i, j}$ is the natural projection induced from the decomposition
$$I^k \hookrightarrow D'^k=\bigoplus_{i+j=k}D'^{i, j}$$
Then we have a commutating diagram of sheaves:
$$\begin{array}{ccccccc}
     I^{\bullet} & \overset{\eta_*}{\rightarrow} & D'^{\bullet, 0}\oplus D'^{0, \bullet} & \rightarrow & D'^{\bullet, 1}\oplus D'^{1, \bullet} & \rightarrow \cdots\\
    \uparrow &  & \uparrow & & \uparrow & \\
    \Z & \rightarrow & \Omega^0\oplus \overline{\Omega}^0 & \rightarrow & \Omega^1\oplus \overline{\Omega}^1 & \rightarrow \cdots\\
  \end{array}
$$

Let $D'^{i, j}=0$ if $i$ or $j$ equals to -1. Then we have a more uniform expression of the resolution of sheaves
$$\Omega^{n_i}\oplus \overline{\Omega}^{m_i} \rightarrow D'^{n_i, 0}\oplus D'^{0, m_i} \rightarrow D'^{n_i, 1}\oplus D'^{1, m_i}
\rightarrow \cdots \rightarrow D'^{n_i, j}\oplus D'^{j, m_i}$$ where
$$
n_i=\left\{
        \begin{array}{ll}
          i, & \hbox{ if } i<p \\
          -1, & \hbox{ if } i\geq p
        \end{array}
      \right.
m_i=\left\{
  \begin{array}{ll}
    i, & \hbox{ if } i<q\\
    -1, & \hbox{ if } i\geq p
  \end{array}
\right.$$

Let $F^{i, j}=D'^{n_i, j}\oplus D'^{j, m_i}$, then $F^k:=\bigoplus_{i+j=k}F^{i, j}$ and $F^k(X)=D'^k(X, p, q)$.
By \cite[Proposition A.3]{HL1}, the hypercohomology
$$\H^{k}(X, \Z \rightarrow \Omega^{\bullet<p, \bullet<q}) \cong H^{k-1}(Cone(\Psi_{p, q}:I^{\bullet}(X) \rightarrow F^{\bullet}(X)))= H^{k-1}(Cone(\Psi_{p, q}:I^{\bullet}(X) \rightarrow D'^{\bullet}(X, p, q)))$$
\end{proof}

\begin{corollary}
There is a short exact sequence
$$ 0\rightarrow H^{k+1}_{ABC}(X; \Z(p, q)) \rightarrow \widehat{H}^k(X, p, q) \overset{\delta_1}{\rightarrow} Z^{k+1}_I(X, p, q) \rightarrow 0$$
\end{corollary}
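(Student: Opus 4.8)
The plan is to read the desired sequence straight off the two results that immediately precede it, so essentially no new exactness needs to be established. Proposition~\ref{3x3} already contains, as the middle horizontal line of its $3\times 3$ grid, the short exact sequence
$$0 \rightarrow H^k(G) \rightarrow \widehat{H}^k(X, p, q) \overset{\delta_1}{\rightarrow} Z^{k+1}_I(X, p, q) \rightarrow 0,$$
in which $G$ is the cone complex of $\Psi_{p,q}\colon I^{\bullet}(X)\rightarrow D'^{\bullet}(X,p,q)$ and $Z^{k+1}_I(X,p,q)$ is exactly the group of $d_{p,q}$-closed pairs with integral period described in that proposition. Thus the only thing to do is to reinterpret the left-hand term.

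The key step is the identification $H^k(G)\cong H^{k+1}_{ABC}(X;\Z(p,q))$. This is immediate from the preceding proposition, which asserts $H^{m}_{ABC}(X;\Z(p,q))\cong H^{m-1}(Cone(I^{\bullet}(X)\xrightarrow{\Psi_{p,q}} D'^{\bullet}(X,p,q)))$ for every $m$: taking $m=k+1$, and noting that the cone complex occurring there is literally the complex $G$ appearing in the grid (same cochain morphism, same total complex, same differential), one gets $H^{k+1}_{ABC}(X;\Z(p,q))\cong H^{k}(G)$. Substituting this isomorphism into the middle row of the grid yields the stated sequence, with the map $H^{k+1}_{ABC}(X;\Z(p,q))\hookrightarrow \widehat{H}^k(X, p, q)$ defined as the composite of this identification with the grid's inclusion $H^k(G)\hookrightarrow\widehat{H}^k(X,p,q)$.

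The only point that requires any care — and the nearest thing to an obstacle — is the bookkeeping of the degree shift: the hypercohomology index in the ABC proposition sits one above the cone-cohomology index, while the grid is written in terms of $H^k(G)$ and $\widehat{H}^k(X,p,q)$, so one must confirm that "$k+1$ in $H^{k+1}_{ABC}$" lines up with "$k$ in $H^k(G)$" and is not off by one. Once this is checked, injectivity of $H^{k+1}_{ABC}(X;\Z(p,q))\to\widehat{H}^k(X,p,q)$, exactness at $\widehat{H}^k(X,p,q)$, and surjectivity of $\delta_1$ onto $Z^{k+1}_I(X,p,q)$ are all inherited verbatim from Proposition~\ref{3x3}, so no further argument is needed.
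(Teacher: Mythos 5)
Your proposal is correct and follows exactly the paper's own argument: the author likewise takes the middle row of the $3\times 3$-grid from Proposition~\ref{3x3} and substitutes the identification $H^{k+1}_{ABC}(X;\Z(p,q))\cong H^{k}(Cone(\Psi_{p,q}))=H^k(G)$ from the preceding proposition. The degree-shift check you flag is the only bookkeeping involved, and you have handled it correctly.
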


\begin{proof}
Consider the $3\times 3$-grid in Proposition \ref{3x3} associated to the spark complex $\mathscr{S}=(D'^{\bullet}(X, p, q), \mathcal{E}^{\bullet}(X, p, q), I^{\bullet}(X))$.
By result above, we may replace the cohomology of the cone complex in the middle row of the 3x3-grid by the ABC cohomology.
\end{proof}

\begin{corollary}
On a complex manifold $X$, the complex conjugation on currents induces an isomorphism between $H^k_{ABC}(X; \Z(p, q))$ and $H^k_{ABC}(X; \Z(q, p))$.
\end{corollary}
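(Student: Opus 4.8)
The plan is to deduce this directly from the identification $H^k_{ABC}(X;\Z(p,q))\cong H^{k-1}(Cone(I^{\bullet}(X)\overset{\Psi_{p,q}}{\rightarrow}D'^{\bullet}(X,p,q)))$ proved above, by showing that fibrewise complex conjugation on currents is compatible with the two cone complexes. First I would observe that complex conjugation interchanges bidegrees, $D'^{i,j}(X)\to D'^{j,i}(X)$, so the summand $D'^{i_1,j_1}(X)$ of $D'^{k}(X,p,q)$ with $i_1<p$ is carried to $D'^{j_1,i_1}(X)$, which is precisely the summand of $D'^{k}(X,q,p)$ whose \emph{second} index is $<p$, and likewise the summand with $j_2<q$ is carried to the summand of $D'^{k}(X,q,p)$ whose \emph{first} index is $<q$. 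Reading off the definitions, conjugation followed by interchanging the two direct summands therefore gives an additive ($\C$-antilinear) bijection $c_{p,q}\colon D'^{\bullet}(X,p,q)\to D'^{\bullet}(X,q,p)$, $(a,b)\mapsto(\overline b,\overline a)$.

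Next I would verify two compatibilities, both of which are the computations already performed in the proof of Proposition \ref{conjugation iso}. Using $\overline{\pi_p a}=\pi'_p\overline a$, $\overline{\pi'_q b}=\pi_q\overline b$ and $\overline{da}=d\overline a$ one obtains $c_{p,q}\circ d_{p,q}=d_{q,p}\circ c_{p,q}$, so $c_{p,q}$ is an isomorphism of cochain complexes. Since every locally integral current is real, complex conjugation is the identity on $I^{\bullet}(X)$, and $\overline{\Psi_{p,q}(r)}=(\overline{\pi'_q r},\overline{\pi_p r})=(\pi_q r,\pi'_p r)=\Psi_{q,p}(r)$, so the square
$$\xymatrix{
I^{\bullet}(X) \ar[r]^{\Psi_{p,q}} \ar@{=}[d] & D'^{\bullet}(X,p,q) \ar[d]^{c_{p,q}}\\
I^{\bullet}(X) \ar[r]^{\Psi_{q,p}} & D'^{\bullet}(X,q,p)
}$$
commutes. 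Hence $(\mathrm{id},c_{p,q})$ is an isomorphism of mapping cones $Cone(\Psi_{p,q})\to Cone(\Psi_{q,p})$, so it induces isomorphisms $H^{k-1}(Cone(\Psi_{p,q}))\cong H^{k-1}(Cone(\Psi_{q,p}))$ for all $k$; combined with the cone description of ABC cohomology this gives the asserted isomorphism $H^k_{ABC}(X;\Z(p,q))\cong H^k_{ABC}(X;\Z(q,p))$.

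An essentially equivalent alternative would be to start from the conjugation isomorphism $\widehat{H}^k(X,p,q)\cong\widehat{H}^k(X,q,p)$ of Proposition \ref{conjugation iso} and check that it carries the subgroup $H^{k+1}_{ABC}(X;\Z(p,q))$ of the short exact sequence onto $H^{k+1}_{ABC}(X;\Z(q,p))$; concretely, conjugation sends the curvature $\delta_1(a,b,r)=d_{p,q}(a,b)+\Psi_{p,q}(r)$ of a spark to $d_{q,p}(\overline b,\overline a)+\Psi_{q,p}(r)$, the curvature of its image, so it is compatible with $\delta_1$ and restricts to an isomorphism of kernels. I expect no genuine obstacle here: the only point needing care is the bidegree bookkeeping in the first step --- confirming that the inequalities $i_1<p$ and $j_2<q$ transform correctly under the index interchange so that $c_{p,q}$ really lands in $D'^{\bullet}(X,q,p)$ --- and this is exactly the bookkeeping already carried out for Proposition \ref{conjugation iso}.
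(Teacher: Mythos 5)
Your proposal is correct. Your primary argument — building a $\C$-antilinear cochain isomorphism $c_{p,q}\colon D'^{\bullet}(X,p,q)\to D'^{\bullet}(X,q,p)$, $(a,b)\mapsto(\overline b,\overline a)$, checking $c_{p,q}\circ d_{p,q}=d_{q,p}\circ c_{p,q}$ and $c_{p,q}\circ\Psi_{p,q}=\Psi_{q,p}$ (using that integral currents are real), and then passing to an isomorphism of mapping cones — is a genuinely more explicit, chain-level route than what the paper does. The paper's proof is a one-liner: it invokes Proposition \ref{conjugation iso} (the conjugation isomorphism $\widehat{H}^{k-1}(X,p,q)\cong\widehat{H}^{k-1}(X,q,p)$) together with the identification of $H^k_{ABC}(X;\Z(p,q))$ as the subgroup $\ker\delta_1\subseteq\widehat{H}^{k-1}(X,p,q)$; that is, the paper's argument is exactly the ``essentially equivalent alternative'' you sketch in your last paragraph (where one must observe, as you do, that conjugation intertwines the two $\delta_1$ maps so that kernels map to kernels). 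What your main route buys is independence from the spark-group machinery: it proves the statement directly at the level of the cone complexes computing the hypercohomology, and the bidegree bookkeeping you carry out (that $i_1<p$ becomes ``second index $<p$,'' i.e.\ the second summand of $D'^{k}(X,q,p)$, and likewise for $j_2<q$) is exactly the point that needs checking and is checked correctly. What the paper's route buys is brevity, since the chain-level verification was already done in Proposition \ref{conjugation iso}. Either way the argument is complete.
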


\begin{proof}
This follows from Proposition \ref{conjugation iso} by considering $H^k_{ABC}(X; \Z(p, q))$ as a subgroup of $\widehat{H}^{k-1}(X, p, q)$.
\end{proof}

\begin{definition}
On a compact K\"ahler manifold $X$, we define the total Griffiths's $p$-th intermediate Jacobian to be the group
$$ \mathcal{TJ}_p(X):=(F^pH^{2p-1}(X; \C)/H^{2p-1}(X; \Z))\bigoplus (\overline{F^pH^{2p-1}}(X; \C)/H^{2p-1}(X; \Z))$$
where $F^pH^{2p-1}(X; \C)=\underset{\underset{i\geq p}{i+j=2p-1}}\bigoplus H^{i, j}(X)$ is the Hodge filtration and $\overline{F^pH^{2p-1}(X; \C)}$ is the complex conjugation of
$F^pH^{2p-1}(X; \C)$.
\end{definition}

\begin{corollary}
When $p=q, k=2p-1$, on a compact K\"ahler manifold $X$, the $3\times 3$-grid has the form
$$\xymatrix{ & 0 \ar[d] & 0 \ar[d] & 0 \ar[d] &\\
0 \ar[r]& \mathcal{TJ}_p(X) \ar[r] \ar[d] & \widehat{H}^k_E(X, p, p) \ar[r] \ar[d] & d_{p, p}\mathcal{E}^{2p-1}(X, p, p) \ar[r] \ar[d] & 0\\
0\ar[r]& H^{2p}_{ABC}(X; \Z(p, p)) \ar[r] \ar[d] & \widehat{H}^{2p-1}(X, p, p) \ar[r]^{\delta_1} \ar[d]^{\delta_2} & Z^{2p}_I(X, p, p) \ar[r] \ar[d] & 0\\
0 \ar[r]& Hdg^{p, p}(X) \ar[r] \ar[d] & H^{2p}(X; \Z) \ar[r]^{(\Psi_{p, q})_*} \ar[d] & H^{2p}_I(X, p, p) \ar[r] \ar[d] & 0\\
& 0 & 0 & 0 &\\}
$$
where $Hdg^{p, p}(X)$ is the group of Hodge classes.
\end{corollary}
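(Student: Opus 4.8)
The plan is to read the whole diagram off the general $3\times3$ grid of Proposition~\ref{3x3}, applied to the spark complex $\mathscr{S}=(D'^{\bullet}(X,p,p),\mathcal{E}^{\bullet}(X,p,p),I^{\bullet}(X))$, together with the isomorphism $H^{k}(G)\cong H^{k+1}_{ABC}(X;\Z(p,p))$ already established: the corollary asserts only that, once $X$ is compact K\"ahler, $p=q$ and $k=2p-1$, several of the corner groups acquire classical names. So I would keep the grid fixed and identify those corners one at a time, all via Hodge theory and the de Rham--Federer theorem. The middle column, the top row, the top-right and middle-right entries are unchanged; the middle-left entry is $H^{2p}_{ABC}(X;\Z(p,p))$ by the proposition on the cone complex; and it remains to treat the bottom row and the top-left corner.

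The bottom row is quick once one knows $H^{\bullet}(D'^{\bullet}(X,p,p))$. By the de Rham--Federer theorem $(I^{\bullet}(X),d)$ computes integral cohomology, so $H^{k+1}(I^{\bullet}(X))\cong H^{2p}(X;\Z)$. To compute $(D'^{\bullet}(X,p,p),d_{p,p})$, note that since $\mathcal{E}^{\bullet}(X,p,p)\hookrightarrow D'^{\bullet}(X,p,p)$ is a quasi-isomorphism and, for $p=q$, the differential $d_{p,p}$ is block diagonal, this complex is the direct sum of the two truncated de Rham complexes $\mathcal{E}^{\bullet}_{i<p}$ (differential $\pi_{p}d$) and $\mathcal{E}^{\bullet}_{j<p}$ (differential $\pi'_{p}d$). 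Since $\bigoplus_{i\geq p}\mathcal{E}^{i,\bullet}$ is a genuine subcomplex of the de Rham complex whose inclusion realizes the Hodge filtration, the long exact sequence of $0\to\bigoplus_{i\geq p}\mathcal{E}^{i,\bullet}\to\mathcal{E}^{\bullet}\to\mathcal{E}^{\bullet}_{i<p}\to0$ together with the $E_{1}$-degeneration of the Fr\"olicher spectral sequence on a compact K\"ahler manifold (so that $F^{p}H^{n}(X;\C)\hookrightarrow H^{n}(X;\C)$ is injective and the connecting maps vanish) gives $H^{n}(\mathcal{E}^{\bullet}_{i<p})\cong H^{n}(X;\C)/F^{p}H^{n}(X;\C)$; applying complex conjugation (Proposition~\ref{conjugation iso}) to the second summand yields
$$H^{n}(D'^{\bullet}(X,p,p))\cong \big(H^{n}(X;\C)/F^{p}H^{n}(X;\C)\big)\oplus\big(H^{n}(X;\C)/\overline{F^{p}H^{n}(X;\C)}\big).$$

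Now I would finish the identifications. In total degree $n=k+1=2p$, the map $(\Psi_{p,q})_{*}$ sends a $d$-closed integral current $\gamma$ to the pair of its classes in the two truncations, which under the isomorphism above is $\gamma\mapsto(\gamma\bmod F^{p},\gamma\bmod\overline{F^{p}})$ (here one uses that the Hodge components of a $d$-closed form on a compact K\"ahler manifold carry the Hodge components of its class); hence $Ker^{2p}((\Psi_{p,q})_{*})=(F^{p}H^{2p}\cap\overline{F^{p}H^{2p}})\cap H^{2p}(X;\Z)=H^{p,p}(X)\cap H^{2p}(X;\Z)=Hdg^{p,p}(X)$, and the cokernel $H^{2p}(X;\Z)/Hdg^{p,p}(X)$ is the entry written $H^{2p}_{I}(X,p,p)$, completing the bottom row. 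In total degree $n=k=2p-1$ the bidegree $(p,p)$ does not occur, so $H^{2p-1}(X;\C)=F^{p}H^{2p-1}(X;\C)\oplus\overline{F^{p}H^{2p-1}(X;\C)}$; the two projections identify $H^{2p-1}(D'^{\bullet}(X,p,p))$ with $\overline{F^{p}H^{2p-1}(X;\C)}\oplus F^{p}H^{2p-1}(X;\C)$, under which the image $H^{2p-1}_{I}$ of $H^{2p-1}(X;\Z)$ is exactly the Hodge-decomposed integral lattice. Thus $H^{2p-1}(D'^{\bullet}(X,p,p))/H^{2p-1}_{I}$ is the cokernel of $H^{2p-1}(X;\Z)\hookrightarrow F^{p}H^{2p-1}(X;\C)\oplus\overline{F^{p}H^{2p-1}(X;\C)}$, i.e.\ $\mathcal{TJ}_{p}(X)$. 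Feeding these names, together with $H^{2p-1}(G)\cong H^{2p}_{ABC}(X;\Z(p,p))$, into the grid of Proposition~\ref{3x3} gives the displayed diagram.

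I expect the only real work to be the computation of $H^{\bullet}(D'^{\bullet}(X,p,p))$: one must know that truncating the de Rham complex of a compact K\"ahler manifold by holomorphic (resp.\ antiholomorphic) degree computes $H^{\bullet}(X;\C)/F^{p}$ (resp.\ $/\overline{F^{p}}$), which is exactly where the K\"ahler hypothesis enters --- via $E_{1}$-degeneration, equivalently the $\partial\bar\partial$-lemma, which forces the connecting maps to vanish so that $(\Psi_{p,q})_{*}$ and the quotient maps are the naive Hodge-theoretic projections. The remaining ingredients --- the de Rham--Federer theorem, the equality $F^{p}\cap\overline{F^{p}}=H^{p,p}$ in even total degree, and the absence of a $(p,p)$-summand in odd total degree --- are elementary, and the rest is substitution into an already-established grid.
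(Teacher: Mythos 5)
Your proof is correct and is essentially the argument the paper intends --- the paper states this corollary without proof, and the natural justification is exactly what you give: substitute into the $3\times3$ grid of Proposition~\ref{3x3}, replace the cone cohomology by $H^{2p}_{ABC}(X;\Z(p,p))$, and identify the remaining corners via the de Rham--Federer theorem together with $E_1$-degeneration and the Hodge decomposition on a compact K\"ahler manifold. The only cosmetic remark is that $D'^{\bullet}(X,p,q)$ splits as a direct sum of the two truncated complexes for all $p,q$, not just for $p=q$, but nothing in your argument depends on that point.
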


Let $X$ be a complex manifold. Recall that (see \cite{HL1, H}) the Harvey-Lawson spark groups of level $p$ are the spark groups of the spark complex
$$(D'^{\bullet}(X, p), \mathcal{E}^{\bullet}(X, p), I^{\bullet}(X))$$
where
$D'^k(X, p)=\bigoplus_{\overset{i+j=k}{i<p}}D'^{i, j}(X)$,
$\mathcal{E}^k(X, p)=\bigoplus_{\overset{i+j=k}{i<p}}\mathcal{E}^{i, j}(X)$, and $I^{\bullet}(X) \rightarrow D'^{\bullet}(X, p)$ is the projection map.
The Deligne cohomology group $H^{k+1}_{\mathscr{D}}(X;\Z(p))$ sits in the short exact sequence
$$0 \rightarrow H^{k+1}_{\mathscr{D}}(X;\Z(p)) \rightarrow \widehat{H}^{k}(X, p)\overset{\delta_1}{\rightarrow} Z^{k+1}_I(X, p)\rightarrow 0$$

\begin{proposition}
\begin{enumerate}
\item
We have a morphism between spark complexes
$$\xymatrix{ I^{\bullet} \ar[r] \ar@{=}[d] & D'^{\bullet}(X, p, q) \ar@<-6ex>[d] \supseteq  \mathcal{E}^{\bullet}(X, p, q)\ar@<6ex>[d]\\
I^{\bullet}\ar[r] & D'^{\bullet}(X, p) \supseteq \mathcal{E}^{\bullet}(X, p)}$$ \linebreak[2]
where the middle map is given by the natural projection.
This morphism induces a morphism between short exact sequences:
$$\xymatrix{ 0 \ar[r] & H^k_{ABC}(X; \Z(p, q)) \ar[r] \ar[d] & \widehat{H}^{k-1}(X, p, q) \ar[r] \ar[d] & Z^k_I(X, p, q) \ar[r] \ar[d] & 0\\
0 \ar[r] & H^k_{\mathscr{D}}(X; \Z(p)) \ar[r] & \widehat{H}^{k-1}(X, p) \ar[r] & Z^k_I(X, p) \ar[r] & 0}$$

\item For $X$ a complex manifold, there is a commutative diagram
$$\xymatrix{ \widehat{H}^{k}(X, p, q) \ar[r] \ar[d] &  \widehat{H}^{k}(X, q) \ar[d]\\
  \widehat{H}^{k}(X, p) \ar[r] & H^{k}(X; \Z)}$$
given by natural projections which induces a commutative diagram
$$\xymatrix{ H^k_{ABC}(X; \Z(p, q)) \ar[r] \ar[d] &  H^k_{\mathscr{D}}(X; \Z(q)) \ar[d]\\
  H^{k}_{\mathscr{D}}(X, \Z(p)) \ar[r] & H^{k}(X; \Z)}$$

\item For $X$ a compact K\"ahler manifold, $k=p+q-1$, if $H^{k+1}(X; \Z)$ is a free abelian group, then
$$\widehat{H}^k(X, p, q)\cong (\C/ \Z)^t\oplus H^{k+1}(X; \Z)\oplus d_{p, q}\mathcal{E}^{k}(X, p, q)$$ where $t=\mbox{dim}_{\C} H^{k}(X; \C)$.

\item For $X$ a complex manifold, there is a commutative diagram:
$$\xymatrix{\widehat{H}^k(X, p+1, q+1) \ar[r]^{\delta_1} \ar[d]_{\delta_2} & Z^{k+1}_I(X, p+1, q+1) \ar[d]\\
H^{k+1}(X; \Z) \ar[r]& \underset{\underset{i<p+1}{i+j=k}}{\bigoplus}H^{i, j}_{A, I}(X)\bigoplus \underset{\underset{j<q+1}{i+j=k}}{\bigoplus}H^{i, j}_{A, I}(X)}$$
where the right vertical arrow is given by $(e_1, e_2)\mapsto ([e_1], [e_2])$, the bottom horizontal arrow is induced by the projection $\Pi_{i, j}:I^{k+1}(X) \rightarrow D'^{i, j}(X)$, and $H^{i, j}_{A, I}(X)$
is the image of the homomorphism $\Pi_{(i, j)*}: H^{k+1}(X; \Z) \rightarrow H^{i, j}_{A}(X)$ where $H^{i, j}_A(X)$ is the $(i, j)$ Aeppli cohomology of $X$.

\end{enumerate}
\end{proposition}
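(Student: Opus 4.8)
The plan is to treat the four parts separately. Parts (1), (2), (4) are compatibility statements whose proofs follow the same template: produce a morphism of spark complexes (or, for (4), unwind $\delta_1$ and $\delta_2$ on a chosen representative) and then invoke functoriality of the constructions behind Proposition \ref{3x3} and of the cone-complex identification $H^k_{ABC}(X;\Z(p,q))\cong H^{k-1}(Cone(\Psi_{p,q}))$. For (1): the first summand of $D'^k(X,p,q)$ is by definition $D'^k(X,p)$, so projection onto it is a candidate map $D'^\bullet(X,p,q)\to D'^\bullet(X,p)$; I would check it is a cochain map ($\pi_p da=d_p a$ on $\bigoplus_{i<p}D'^{i,j}$), carries $\mathcal E^\bullet(X,p,q)$ into $\mathcal E^\bullet(X,p)$, and intertwines $\Psi_{p,q}$ with $\Psi_p$, whence it is a morphism of spark complexes; the induced maps on spark groups, on cone complexes, and on the cycle groups then fit into a commuting ladder because all three columns of the $3\times3$ grid are functorial in the spark complex. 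Part (2) applies this twice: the same projection gives $\widehat H(X,p,q)\to\widehat H(X,p)$, while ``project onto the second summand, then apply the complex conjugation of currents'' gives $\widehat H(X,p,q)\to\widehat H(X,q)$, using $\overline{D'^{i,j}}=D'^{j,i}$ (so the second summand conjugates onto $D'^\bullet(X,q)$) and $\overline r=r$ for integral currents, exactly as in Proposition \ref{conjugation iso}. Both composites into $H^\bullet(X;\Z)$ equal the characteristic-class map $\delta_2$, which records only the class of the integral current and so is unchanged by projection or conjugation; hence the outer square commutes, and passing to cone cohomology yields the second square.

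For (4), fix a spark $(a,b,r)$ representing a class in $\widehat H^k(X,p+1,q+1)$. By the spark equation its curvature $(e_1,e_2)=\delta_1([(a,b,r)])$ satisfies $e_1=\pi_{p+1}(da)+\pi_{p+1}(r)$ and $e_2=\pi'_{q+1}(db)+\pi'_{q+1}(r)$, so the cohomology class of the smooth form $e_1$ coincides with that of the current $\pi_{p+1}(r)$, and similarly for $e_2$. On the other hand $\delta_2([(a,b,r)])=[r]\in H^{k+1}(X;\Z)$, whose image under the bottom map is obtained by applying the projections $\Pi_{i,j}$ to $r$ and taking classes — the same currents, matched up by bidegree. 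Hence the square commutes; and because $\delta_1$ lands in $Z^{k+1}_I$, the curvature has integral periods, which is exactly what puts $([e_1],[e_2])$ in the subgroups $H^{i,j}_{A,I}(X)$ defined in the statement as the images of $\Pi_{i,j*}$.

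Part (3) is the one with real content, and the cohomology identification below is where I expect the main difficulty. Apply the $3\times3$ grid of Proposition \ref{3x3} to the spark complex $(D'^\bullet(X,p,q),\mathcal E^\bullet(X,p,q),I^\bullet(X))$. Its middle column $0\to\widehat H^k_E(X,p,q)\to\widehat H^k(X,p,q)\xrightarrow{\delta_2}H^{k+1}(X;\Z)\to0$ splits as abelian groups because $H^{k+1}(X;\Z)$ is free, hence projective; its top row $0\to H^k(D'^\bullet(X,p,q))/H^k_I(D'^\bullet(X,p,q))\to\widehat H^k_E(X,p,q)\to d_{p,q}\mathcal E^k(X,p,q)\to0$ splits once its leftmost term is recognized as $(\C/\Z)^t$, since such a group is divisible and hence injective as a $\Z$-module. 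Combining the two splittings gives the asserted decomposition, so everything comes down to computing $H^k(D'^\bullet(X,p,q))$ together with the lattice $H^k_I$ inside it. Using the quasi-isomorphism $\mathcal E^\bullet(X,p,q)\hookrightarrow D'^\bullet(X,p,q)$ and the fact that $d_{p,q}$ preserves the two summands, $(\mathcal E^\bullet(X,p,q),d_{p,q})$ is the direct sum of the quotient complexes $\mathcal E^\bullet(X)/F^p\mathcal E^\bullet(X)$ and $\mathcal E^\bullet(X)/\overline{F}^q\mathcal E^\bullet(X)$ (the latter with the conjugate Hodge filtration), which on the compact Kähler manifold $X$ compute $H^k(X;\C)/F^pH^k(X;\C)$ and $H^k(X;\C)/\overline{F^qH^k(X;\C)}$ by degeneration of the Hodge--de Rham spectral sequence.

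Finally — and this is the step that uses $k=p+q-1$ and the Kähler hypothesis essentially — the bidegree ranges $i\ge p$ and $i\le k-q=p-1$ occurring in $F^pH^k(X;\C)$ and $\overline{F^qH^k(X;\C)}$ are complementary, so these two subspaces meet only in $0$ and their codimensions sum to $\dim_\C H^k(X;\C)$; hence the natural map $H^k(X;\C)\to H^k(X;\C)/F^pH^k(X;\C)\oplus H^k(X;\C)/\overline{F^qH^k(X;\C)}$ is an isomorphism, giving $H^k(D'^\bullet(X,p,q))\cong H^k(X;\C)$. Under this isomorphism $H^k_I(D'^\bullet(X,p,q))$ is the image of $H^k(X;\Z)$ (the compatibility of the quasi-isomorphism with the map induced by $\Psi_{p,q}$ has to be checked here), a lattice of full rank $t=\dim_\C H^k(X;\C)$ in a real $2t$-dimensional space, so the quotient is $\cong\R^{2t}/\Z^t\cong(\C/\Z)^t$ as an abelian group, completing the argument. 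The delicate bookkeeping is to keep the two splittings as splittings of abelian groups — freeness of $H^{k+1}(X;\Z)$ for one, divisibility of $(\C/\Z)^t$ for the other — rather than of vector spaces, and to verify that the quasi-isomorphism identifies the integral lattice correctly.
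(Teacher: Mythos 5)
Your proposal is correct and takes essentially the same route as the paper: all four parts rest on the functoriality of the $3\times 3$ grid of Proposition \ref{3x3}, and part (3) is the paper's own argument of splitting the middle column (freeness of $H^{k+1}(X;\Z)$) and the top row after identifying the corner group with $H^{k}(X;\C)/H^{k}(X;\Z)\cong(\C/\Z)^t$ via the complementarity of the bidegree ranges $i<p$ and $j<q$ when $k=p+q-1$. Two of your justifications are in fact tighter than the paper's: you split the top row using that $(\C/\Z)^t$ is divisible, hence injective as a $\Z$-module (the paper appeals to the quotient $d_{p,q}\mathcal{E}^k(X,p,q)$ being a vector space, which by itself does not force a splitting of abelian groups), and in part (2) you insert the complex conjugation needed to carry the second summand, which is cut out by the condition on antiholomorphic degree, onto $D'^{\bullet}(X,q)$, a point the paper's formula $[(a,b,r)]\mapsto[(b,r)]$ leaves implicit.
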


\begin{proof}
\begin{enumerate}
\item This follows directly from definition.

\item The morphisms are
$$\xymatrix{ [(a, b, r)] \ar[r] \ar[d] & [(b, r)]\ar[d]\\
[(a, r)] \ar[r] & [r]}$$

\item In a compact K\"ahler manifold, $k=p+q-1$, $\H^{k}(X; \Omega^{\bullet<p, \bullet<q})=\underset{\underset{r<p}{r+s=k}}{\bigoplus}H^{r, s}(X)\bigoplus \underset{\underset{s<q}{r+s=k}}{\bigoplus}H^{r, s}(X)=H^{k}(X; \C)$. Note that
$H^{k}(\mathcal{E}^{\bullet}(X, p, q))\cong \H^{k}(X;  \Omega^{\bullet<p, \bullet<q})$. Now consider the $3\times 3$-grid associated to the
spark complex $\mathscr{S}=(D'^{\bullet}(X, p, q), \mathcal{E}^{\bullet}(X, p, q), I^{\bullet}(X))$. Since $H^{k+1}(X; \Z)$ is a free abelian group, the middle
column of the $3\times 3$-grid splits. Since $d_{p, q}\mathcal{E}^k(X, p, q)$ is a vector space, the top row of the $3\times 3$-grid also splits. Thus we have
$\widehat{H}^{k}(X, p, q)\cong \frac{H^{k}(\mathcal{E}^{\bullet}(X, p, q))}{H^{k}_I(\mathcal{E}^{\bullet}(X, p, q))}\oplus H^{k+1}(X; \Z)\oplus d_{p, q}\mathcal{E}^k(X, p, q)$ and the result follows.

\item Recall that the Aeppli cohomology is defined as $H^{i, j}_A(X)=\frac{Ker\partial{\overline{\partial}}}{Im\partial+Im\overline{\partial}}$. For $(e_1, e_2)\in Z^k_I(X, p+1, q+1)$, $\pi_{p+1}de_1=0$. By comparing the types of both sides, we get $(\overline{\partial} e^{p, q}_1+\partial e^{p-1, q+1}_1)+\cdots =0$.
This implies that $\overline{\partial}\partial e^{p-i, q+i}_1=0$ for $i=1, 2, ..., p$. Similarly, $\overline{\partial}\partial e^{p+j, q-j}_2=0$ for $j=1, 2, ..., q$. So
$([e_1], [e_2])\in \underset{\underset{i<p+1}{i+j=k}}{\bigoplus}H^{i, j}_{A}(X)\bigoplus \underset{\underset{j<q+1}{i+j=k}}{\bigoplus}H^{i, j}_{A}(X)$.
Note that if $d\alpha=0$, then $\partial\overline{\partial}\alpha^{i, j}=0$ where $\alpha=\sum_{i+j=k+1}\alpha^{i, j}$, and $\Pi_{i, j}(d\beta)=\partial\beta^{i-1, j}+\overline{\partial}\beta^{i, j-1}$ for $\beta=\sum_{i+j=k}\beta^{i, j}$. This implies that $\Pi_{(p, q)*}$ is well defined. The commutativity of this diagram is clear.
Since $\delta_1$ is surjective, the right vertical homomorphism has image as indicated.
\end{enumerate}
\end{proof}

\subsection{Lefschetz property}
Let $X$ be a K\"ahler manifold with K\"ahler form $\omega$. The Lefschetz operator $\L:D'^{\bullet}(X) \rightarrow D'^{\bullet+2}(X)$ is defined
by $\L(\alpha)=\omega\wedge \alpha$. Let us recall that when in addition $X$ is compact, the Lefschetz decomposition of forms induces a decomposition on currents. We summarize several
properties that we need in the following: suppose that the dimension of $X$ is $n$.
\begin{enumerate}
\item $D'^k(X)=\sum_{i\geq i_0}\L^iP^{k-2i}(X)$ where $P^k(X)=\{\alpha\in D'^k(X)|\L^{n-k+1}\alpha=0\}$ is the primitive part, $i_0=max\{i-n, 0\}$,
the Lefschetz operator $\L^{n-k}:D'^k(X) \rightarrow D'^{2n-k}(X)$ is an isomorphism, and $\L^j:D'^i(X) \rightarrow D'^{i+2j}(X)$ is injective
if $j\leq n-i$.
\item If $a=\sum_{i\geq i_0}\L^ia_i\in D'^k(X)$ is the Lefschetz decomposition of $a$ where $i_0=max\{i-n, 0\}$, $a_i\in P^{k-2i}(X)$, define
$Ta=\sum_{i\geq i_1}\L^{i-1}a_i$ where $i_1=max\{i-n, 1\}$, then $T^{n-k}$ is the inverse of $\L^{n-k}:D'^k(X) \rightarrow D'^{2n-k}(X)$ and
$T^{n-k}\circ \L^{n-k}=id_{k-1}:D'^k(X) \rightarrow D'^k(X)$ if $k\leq n$.
\end{enumerate}

\begin{proposition}
Suppose that $p+q=k-1$ and $k\leq n$, then the map $\L^{n-k}$ induces monomorphisms
$$\L^{n-k}:\widehat{H}^{k-1}(X, p, q; \Q)\rightarrow \widehat{H}^{2n-k+1}(X, n-q, n-p; \Q)$$ and $$\L^{n-k}:H^k_{ABC}(X; \Z(p, q); \Q) \rightarrow H^{2n-k}_{ABC}(X; \Z(n-q, n-p); \Q)$$
where $\Q$ indicates the original groups tensored with $\Q$ over $\Q$. Furthermore, these monomorphisms are isomorphisms if the primitive
cohomology $PH^{k-1}(X; \Q)=0$.
\end{proposition}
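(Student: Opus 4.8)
The plan is to realize $L^{n-k}$ as a morphism of spark complexes and then run the five lemma on the two short exact sequences presenting $\widehat{H}^{k-1}(X,p,q)$, with hard Lefschetz supplying what is needed on the building blocks. Fix a K\"ahler form $\omega$; after tensoring with $\Q$ we may and do take $[\omega]$ to be rational, so that $L=\omega\wedge(-)$ preserves rational locally integral currents, and since $\omega$ is closed of bidegree $(1,1)$ the operator $L$ commutes with $d$, $\partial$, $\overline{\partial}$ and with the complex conjugation of Proposition \ref{conjugation iso}. The first task is to check that $L^{n-k}$ is a morphism of spark complexes of degree $2(n-k)$ from $(D'^{\bullet}(X,p,q),\mathcal{E}^{\bullet}(X,p,q),I^{\bullet}(X))\otimes\Q$ to $(D'^{\bullet}(X,n-q,n-p),\mathcal{E}^{\bullet}(X,n-q,n-p),I^{\bullet}(X))\otimes\Q$. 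Compatibility with $\Psi_{p,q}$ and with the embedding $\mathcal{E}^{\bullet}\hookrightarrow D'^{\bullet}$ is immediate; the one real point is that $L^{n-k}$ intertwines the truncated differentials $d_{p,q}$ and $d_{n-q,n-p}$. Since $L^{n-k}$ commutes with $d$, this reduces to comparing the bidegree projections, and the numerical identity $p+q=k-1$ together with $k\leq n$ is precisely what makes $\pi_{n-q}\circ L^{n-k}=L^{n-k}\circ\pi_{p}$ and $\pi'_{n-p}\circ L^{n-k}=L^{n-k}\circ\pi'_{q}$ hold on forms and currents of degree $k$: the components that $d$ creates in holomorphic (resp.\ anti-holomorphic) degree just above the source truncation are, after wedging with $\omega^{n-k}$, exactly those truncated away in the target. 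By Proposition \ref{3x3}, $L^{n-k}$ then induces a map of the associated $3\times3$-grids shifting degree by $2(n-k)$; in particular it induces the homomorphisms in the statement, and it carries the subgroup $H^{k}_{ABC}(X;\Z(p,q))\subseteq\widehat{H}^{k-1}(X,p,q)$ into $H^{2n-k}_{ABC}(X;\Z(n-q,n-p))$.

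\emph{Injectivity.} On the compact K\"ahler manifold $X$ of dimension $n$: for $m\leq n$ the operator $L^{n-m}$ is an isomorphism on $\mathcal{E}^{m}(X)$ and on $D'^{m}(X)$, and for $m\leq k$ the operator $L^{n-k}$ is injective on $\mathcal{E}^{m}(X)$, $D'^{m}(X)$ and $I^{m}(X)\otimes\Q$, with left inverse $T^{n-k}$ (the Lefschetz facts recalled above). Consequently $L^{n-k}$ is injective on $Z^{k}_{I}(X,p,q)\subseteq\mathcal{E}^{k}(X,p,q)$; on $H^{k-1}(D'^{\bullet}(X,p,q))$, which by Hodge theory is a direct sum of Dolbeault groups in total degree $k-1\leq n-1$, and hence on the quotient $H^{k-1}(D'^{\bullet}(X,p,q))/H^{k-1}_{I}$; and on $H^{k}(I^{\bullet}(X))\otimes\Q=H^{k}(X;\Q)$, where it is the hard Lefschetz isomorphism onto $H^{2n-k}(X;\Q)$, hence also on the subspace $Ker^{k}((\Psi_{p,q})_{*})$. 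Applying the five lemma to the cone exact sequence $0\to H^{k-1}(D'^{\bullet}(X,p,q))/H^{k-1}_{I}\to H^{k}_{ABC}(X;\Z(p,q))\to Ker^{k}((\Psi_{p,q})_{*})\to0$ gives injectivity of $L^{n-k}$ on $H^{k}_{ABC}(X;\Z(p,q))_{\Q}$; feeding this, together with injectivity on $Z^{k}_{I}(X,p,q)$, into $0\to H^{k}_{ABC}(X;\Z(p,q))\to\widehat{H}^{k-1}(X,p,q)\xrightarrow{\delta_{1}}Z^{k}_{I}(X,p,q)\to0$ yields injectivity on $\widehat{H}^{k-1}(X,p,q)_{\Q}$. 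This proves both monomorphism claims.

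\emph{The isomorphism case.} When $PH^{k-1}(X;\Q)=0$, the Lefschetz decomposition of $H^{k-1}(X;\Q)$, and of each Dolbeault summand of $H^{k-1}(D'^{\bullet}(X,p,q))$, has no component of primitive degree $k-1$; this forces the primitive pieces obstructing surjectivity of $L^{n-k}$ on these cohomology groups to vanish, and, via the $\partial\overline{\partial}$-lemma, makes $L^{n-k}$ surjective on the form groups $d_{p,q}\mathcal{E}^{k-1}(X,p,q)$ and $Z^{k}_{I}(X,p,q)$ as well. Together with the fact that $L^{n-k}$ is already an isomorphism on $\mathcal{E}^{k}(X)$ and on $H^{k}(X;\Q)$, every vertical arrow in the morphism of $3\times3$-grids becomes an isomorphism, and re-running the two five-lemma arguments above with ``isomorphism'' in place of ``monomorphism'' gives that $L^{n-k}$ is an isomorphism on $\widehat{H}^{k-1}(X,p,q)_{\Q}$ and on $H^{k}_{ABC}(X;\Z(p,q))_{\Q}$.

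\emph{Main obstacle.} The serious step is the construction of the map: proving that $L^{n-k}$ genuinely descends to a morphism of spark complexes, i.e.\ that it respects the truncated differentials on the nose rather than merely up to a bounded correction, which is exactly what the hypothesis $p+q=k-1$ secures; a second, more bookkeeping-heavy point is to identify precisely which primitive obstruction classes the vanishing $PH^{k-1}(X;\Q)=0$ must kill so that surjectivity propagates through the truncated complexes. Once these are settled, the diagram chases are routine.
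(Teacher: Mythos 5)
Your route is genuinely different from the paper's. The paper argues directly on representatives: it uses the operator $T$ recalled just before the proposition as an explicit one-sided inverse, so injectivity falls out of $T^{n-k}\circ L^{n-k}=id$, and the obstruction to surjectivity is computed concretely as the bottom Lefschetz component $a_{n-k-1}$ of a representative of the target class, which is shown to be $d$-exact (hence zero in the spark group) when the primitive cohomology vanishes. You instead promote $L^{n-k}$ to a morphism of spark complexes and run the four/five lemma on the rows and columns of the $3\times 3$-grid. That plan is structurally reasonable, but as written it has three genuine gaps.

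First, well-definedness: you justify acting on the integral component by ``taking $[\omega]$ rational,'' but a compact K\"ahler manifold need not carry a rational K\"ahler class (that is precisely Kodaira's projectivity criterion), and even when it does, $\omega^{n-k}\wedge r$ for $r$ a locally integral current is \emph{not} a rational locally integral current, so the third slot of $L^{n-k}[(a,b,r)]$ does not land in $I^{\bullet}(X)\otimes\Q$. This is the real difficulty in defining the map at all; the paper is equally silent about it, but your proposed fix does not repair it. Second, in the injectivity step you pass from injectivity of $L^{n-k}$ on $H^{k-1}(D'^{\bullet}(X,p,q))$ to injectivity on the quotient by $H^{k-1}_I$ with the words ``and hence on the quotient''; that implication is false in general (injectivity restricts to subspaces but does not descend to quotients). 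It is repairable here --- one needs $L^{-1}$ of the integral subgroup of the target to equal the integral subgroup of the source, which follows from the \emph{surjectivity} half of hard Lefschetz on $H^{*}(X;\Q)$ together with $\Psi_{*}L=L\Psi_{*}$ --- but that argument must be made. Third, the isomorphism case is asserted rather than proved: ``the primitive pieces obstructing surjectivity vanish'' plus an appeal to the $\partial\overline{\partial}$-lemma never identifies \emph{which} primitive group is the obstruction. Since the top and bottom rows of the grid live in adjacent cohomological degrees, a single power of $L$ cannot be the hard Lefschetz isomorphism on both, and pinning the surviving obstruction down to exactly $PH^{k-1}(X;\Q)$ (rather than $PH^{k}$ or $PH^{k+1}$) is the whole content; the paper does this by exhibiting the defect $L^{n-k}T^{n-k}a'-a'$ as an explicit primitive piece and showing it is exact. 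A minor but related point: with the paper's identity $L^{i}\pi_{p}=\pi_{p+i}L^{i}$ and $p+q=k-1$, the power $n-k$ lands in level $n-q-1$, not $n-q$; the indices in the statement are off by one and you have inherited that, so your claim that the truncations intertwine ``on the nose'' should be rechecked after the degrees are made consistent.
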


\begin{proof}
Note that $\L d=d\L$, $Td=dT$ and $\L^i\pi_p=\pi_{p+i}\L^i$, $\L^i\pi'_q=\pi'_{q+i}\L^i$. The maps are well defined and injective
by the properties of Lefschetz decomposition mentioned above. Note that for $[(a', b', r')]\in \widehat{H}^{2n-k+1}(X, n-q, n-p; \Q)$, we have
$[(T^{n-k}a', T^{n-k}b', T^{n-k}r')]\in \widehat{H}^{k-1}(X, p, q; \Q)$, and $\L^{n-k}[(T^{n-k}a', T^{n-k}b', T^{n-k}r')]=[(a'-a_{n-k-1}, b'-b_{n-k-1}, r')]$ where $a'=\sum_{i\geq n-k-1}\L^ia_i, b'=\sum_{i\geq n-k-1}\L^ib_i$ are the Lefschetz decomposition of $a'$ and $b'$. Thus if $PH^{k-1}(X; \Q)=0$,
then $a_{n-k-1}=dc, b_{n-k-1}=de$, and $[(a_{n-k-1}, b_{n-k-1}, 0)]=0$ in $\widehat{H}^{k-1}(X, p, q; \Q)$. By
restriction, the same holds for Aeppli-Bott-Chern cohomology with $\Q$-coefficients.
\end{proof}

\section{Ring structure on $H^{\bullet}_{ABC}(X; \Z(*, *))$}
Let $X$ be a complex manifold of complex dimension $m$.

\begin{definition}
Let $(D'^k(X))^2=D'^k(X)\oplus D'^k(X), (\mathcal{E}^k(X))^2=\mathcal{E}^k(X)\oplus \mathcal{E}^k(X)$, and $\Psi:I^k(X) \rightarrow (D'^k(X))^2$ be defined by
$r \mapsto (r, r)$. Then $((D'^{\bullet}(X))^2, (\mathcal{E}^{\bullet}(X))^2, I^{\bullet}(X))$ is a spark complex. Let
$$\widehat{H}^k_{D^2}(X):=\widehat{H}^k((D'^{\bullet}(X))^2, (\mathcal{E}^{\bullet}(X))^2, I^{\bullet}(X))$$
\end{definition}

To define a ring structure on $\widehat{H}^{\bullet}_{D^2}(X)$, we need a modified version of \cite[Thm D.1]{HLZ}.
If $(a, r)$ is a spark and $da=e-r$, we write $d_1a=e, d_2a=r$.

\begin{lemma}\label{intersect properly}
For given $\alpha\in \widehat{H}^k_{D^2}(X), \beta\in \widehat{H}^{\ell}_{D^2}(X)$ with $k+\ell\leq 2m$ and $(a_1, a_2, r)\in \alpha$, there is
representative $(b'_1, b'_2, s')\in \beta$ such that if $d(a_1, a_2)=(e_1, e_2)-(r, r)$, $d(b'_1, b'_2)=(\widetilde{e}_1, \widetilde{e}_2)-(s, s)$, then
$a_1\wedge b'_1, a_1\wedge s', r\wedge b'_1, r\wedge s, a_2\wedge b'_2, a_2\wedge s', r\wedge b'_2$ are well defined and $r\wedge s'$ is rectifiable.
\end{lemma}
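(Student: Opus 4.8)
The plan is to prove this by a Federer-style dimension/transversality argument, adapting the wedge-product machinery that Harvey--Lawson--Zweck develop in \cite[Appendix D]{HLZ} for ordinary sparks to the doubled spark complex $((D'^\bullet(X))^2,(\mathcal{E}^\bullet(X))^2, I^\bullet(X))$. First I would recall the basic intersection-theoretic facts about currents: if $S,T$ are currents on $X$ whose wave-front sets (or, in the rectifiable case, whose support dimensions) are in general position, then $S\wedge T$ is well defined; a single current can always be put in general position with respect to a given finite collection of others by an arbitrarily small smooth isotopy (this is exactly the content used in \cite[Appendix D]{HLZ}, going back to Federer and to Harvey--Lawson's earlier spark papers). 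The key observation is that in a spark class $\beta$ one has enormous freedom: any representative may be replaced by $(b_1',b_2',s')=(b_1+db_0+t,\, b_2+db_0+t,\, s-dt)$ for $t\in I^\ell(X)$ and $b_0\in (D'^{\ell-1}(X))^2$ chosen so that the two components stay equal under $\Psi$; choosing $t$ rectifiable and $b_0$ smooth, one has a dense supply of representatives whose singular supports we can control.

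The steps, in order, would be: (1) Fix the representative $(a_1,a_2,r)\in\alpha$ with $da_i=e_i-r$ and note that $r$ is a fixed locally integral (hence rectifiable) current of dimension $2m-k-1$, with $e_1,e_2$ smooth; the only "bad" pieces are $a_1,a_2$ (arbitrary currents) and $r$. (2) Start with an arbitrary representative $(b_1,b_2,s)\in\beta$, so $s$ is locally integral of dimension $2m-\ell-1$ and $db_i=\widetilde e_i - s$ with $\widetilde e_i$ smooth. (3) Apply a small smooth isotopy $\varphi_\epsilon$ of $X$, replacing $(b_1,b_2,s)$ by its pushforward; because pushing forward along a diffeomorphism commutes with $d$ and with $\Psi$ (it preserves integrality and respects the bidegree filtration only up to lower-order terms — here I must be slightly careful and instead move things by adding rectifiable boundaries $dt$ rather than by genuine ambient isotopy, to stay inside the complex $D'^\bullet(X,p,q)$), arrange successively that: $s'$ is rectifiable and in general position with $r$ (so $r\wedge s'$ rectifiable); then that $b_1'$ is in general position with each of $a_1$ and $r$; similarly $b_2'$ with $a_2$ and $r$; and finally that $s'$ is in general position with $a_1$ and with $a_2$. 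Each such adjustment is a generic-position statement for one current against a finite list, so it can be achieved, and since $k+\ell\le 2m$ the relevant dimension count $(2m-k)+(2m-\ell)\ge 2m$ leaves enough codimension for the intersections to be defined. (4) Check that the finitely many required products $a_1\wedge b_1',\ a_1\wedge s',\ r\wedge b_1',\ r\wedge s,\ a_2\wedge b_2',\ a_2\wedge s',\ r\wedge b_2'$ are all simultaneously well defined after these adjustments — note $r\wedge s$ involves the \emph{original} $s$ paired with the fixed $r$, so that product must either be handled first or one observes it is already defined because $r,s$ are both rectifiable of complementary-enough dimension; I would flag this and, if needed, absorb it into step (3) by also moving the original $s$.

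The main obstacle, and the place where I expect to spend the most care, is reconciling two competing constraints: on one hand I need to move $b_1,b_2,s$ by small perturbations to gain transversality, and on the other hand the new triple must remain a legitimate representative of a spark in the \emph{doubled} complex, which forces the two smoothing/perturbation operations on the first and second slots to be compatible with the diagonal map $\Psi(r)=(r,r)$ and with the differential $(b_1,b_2)\mapsto (\pi\,db_1,\pi'\,db_2)$ used in $D'^\bullet(X,p,q)$. Genuine ambient isotopies do not preserve the bidegree projections $\pi_p,\pi'_q$, so I cannot simply push everything forward; instead the perturbation has to be implemented by adding terms of the form $d_{p,q}(b_0,b_0) + \Psi(t)$ with $t$ rectifiable, and then the generic-position input is the statement that rectifiable cycles in a fixed homology class can be chosen transverse to a finite family of currents — this is available, but assembling it so that \emph{all seven (or eight) products} are defined at once, rather than one at a time, is the real content. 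I would handle this by perturbing in a single step against the combined finite list $\{a_1,a_2,r,e_i,\widetilde e_j,\dots\}$ (the smooth ones impose no condition), invoking the fact that a finite intersection of open dense transversality conditions is open dense.
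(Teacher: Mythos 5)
There is a genuine gap, and it originates in a misreading of where the lemma lives. The classes $\alpha,\beta$ belong to $\widehat{H}^{\bullet}_{D^2}(X)$, the spark groups of the \emph{doubled} complex $((D'^{\bullet}(X))^2,(\mathcal{E}^{\bullet}(X))^2,I^{\bullet}(X))$, whose differential is simply $d$ on each factor; the projections $\pi_p,\pi_q'$ play no role here (they enter only later, when the product is pushed down to $\widehat{H}^{\bullet}(X,p,q)$ via $(\pi_p,\pi_q',\mathrm{id})$). So the obstacle you single out --- that an ambient isotopy is incompatible with the bidegree projections --- does not exist at this stage, and the pushforward $f_{\xi*}$ along a diffeomorphism close to the identity is perfectly legitimate: it commutes with $d$, preserves integral currents, and is exactly what the paper uses. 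The paper's proof is a direct application of the construction of \cite[Thm.~D.1]{HLZ}: for almost every parameter $\xi$ the currents $f_{\xi*}b_i$ and $f_{\xi*}s$ can be wedged with a given fixed current, so a single $\xi$ can be chosen that is simultaneously good for all seven products (a finite intersection of full-measure sets of parameters is still full measure); one then sets $b_i'=f_{\xi*}b_i+\chi_i+\eta_i$ with $\chi_i,\eta_i$ smooth so that $(b_i',f_{\xi*}s)$ is equivalent to $(b_i,s)$, and takes $s'=f_{\xi*}s$.

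The substitute you propose in place of the pushforward cannot work. Replacing $(b_1,b_2,s)$ by $(b_1+db_0+t,\,b_2+db_0+t,\,s-dt)$ with $b_0$ smooth and $t$ rectifiable leaves the singular support of $b_1$ and $b_2$ completely untouched: $db_0$ is smooth, and $t$ only adds new rectifiable singularities. Since $a_1,a_2,b_1,b_2$ are arbitrary currents, there is no reason that $a_1\wedge b_1'$, $r\wedge b_1'$, etc., become defined after such a modification, and no appeal to general position of rectifiable cycles within a homology class can repair this, because the problematic singularities are not in the rectifiable part. The portion of your plan that is correct --- imposing all finitely many well-definedness conditions at once, since each holds generically --- is precisely the paper's ``almost every $\xi$'' argument, but it must be driven by a perturbation that actually moves $b_1,b_2,s$, namely the diffeomorphism pushforward you discarded. (You are right to flag the term $r\wedge s$: in the statement $s$ is the integral current appearing in $d(b_1',b_2')=(\widetilde{e}_1,\widetilde{e}_2)-(s,s)$, i.e.\ it is already the perturbed $f_{\xi*}s$, and the paper's proof establishes that $r\wedge f_{\xi*}s$ is well defined and rectifiable.)
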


\begin{proof}
Let us recall the construction in \cite[Thm D.1]{HLZ}. For $[(a, R)]\in \widehat{H}^k(X), [(b, S)]\in \widehat{H}^{\ell}(X)$ with $k+\ell\leq 2m$, $db=\psi-S$,
there
is a current $b':=f_{\xi*}b+\chi+\eta$ where $\chi$ is a smooth $\ell$-form, $\eta$ is a smooth $d$-closed $\ell$-form,
for which $a\wedge b', a\wedge d_2b',  R\wedge b'$ and $R\wedge d_2b'$ are well defined, the last one is
rectifiable and $(b', f_{\xi*}S)$ is equivalent to $(b, S)$. The functions $f_{\xi}:X \rightarrow X$ are diffeomorphisms close to identity parametrized
by points $\xi\in \R^N$ for some $N$. Note that $db'=\psi-f_{\xi*}S$ and $d_2b'=f_{\xi*}S$. Now we fix two representatives $(a_1, a_2, r)\in \alpha, (b_1,
b_2, s)\in \beta$. Since $[(a_1, r)], [(a_2, r)]\in \widehat{H}^k(X), [(b_1, s)], [(b_2, s)]\in \widehat{H}^{\ell}(X)$, by the construction above, we may
choose $\xi\in \R^N$ such that $a_1\wedge f_{\xi*}b_1, a_1\wedge f_{\xi*}s, r\wedge f_{\xi*}b_1, r\wedge f_{\xi*}s, a_2\wedge
f_{\xi*}b_2, a_2\wedge f_{\xi*}s, r\wedge f_{\xi*}b_2$ are all simultaneously well defined and $r\wedge f_{\xi*}s$ is rectifiable.

As in the Harvey-Lawson-Zweck's construction, there exist some smooth forms $\chi_1, \eta_1, \chi_2, \eta_2$ and
$$b_1':=f_{\xi*}b_1+\chi_1+\eta_1, \ \ \ b_2':=f_{\xi*}b_2+\chi_2+\eta_2$$
such that $(b_1', f_{\xi*}s)$ and $(b'_2, f_{\xi*}s)$ are equivalent to $(b_1, s)$ and $(b_2, s)$ respectively in $\widehat{H}^{\ell}(X)$. So by definition, $(b_1', b_2',
f_{\xi*}s)\in \beta$ and the products mentioned in the statement of the Lemma are well defined and $r\wedge f_{\xi*}s$ is rectifiable.
\end{proof}

If $da=\phi-R, db=\psi-S$ and the product is well defined for these two sparks, we write
$$a*b:=a\wedge \psi+(-1)^{k+1}R\wedge b$$
We denote by $\sim$ for the equivalence of two sparks.

\begin{lemma}
If $(a_1, a_2, r)\sim (a'_1, a'_2, r')$, $(b_1, b_2, s)\sim (b'_1, b'_2, s')$ are sparks of the spark complex $((D'^{\bullet}(X))^2, (\mathcal{E}^{\bullet}(X))^2, I^{\bullet}(X))$ and the equivalences are given by
$$\left\{
    \begin{array}{ll}
      a'_1-a_1=d\widetilde{u}_1+\widetilde{R}, \\
      a'_2-a_2=d\widetilde{u}_2+\widetilde{R}, \\
      r'-r=-d\widetilde{R}
    \end{array}
  \right.,
\left\{
  \begin{array}{ll}
    b'_1-b_1=d\widetilde{v}_1+\widetilde{T},\\
    b'_2-b_2=d\widetilde{v}_2+\widetilde{T},\\
    s'-s=-d\widetilde{T}
  \end{array}
\right.
$$
respectively. Then there exist
\begin{enumerate}
\item integral current $R=\widetilde{R}+d\sigma_1$ such that $R\wedge s'$ is well defined and rectifiable;
\item current $u_1=\widetilde{u}_1-\sigma_1+d\sigma_2$ such that $u_1\wedge s', du_1\wedge s'$ are well defined;
\item current $u_2=\widetilde{u}_2-\sigma_1+d\sigma_3$ such that $u_2\wedge s', du_2\wedge s'$ are well defined;
\item integral current $T=\widetilde{T}+d\sigma_4$ such that $a_1\wedge T$, $a_1\wedge dT$, $a_2\wedge T$, $a_2\wedge dT$, $T\wedge r$ are well defined and $T\wedge r$ is rectifiable.
\end{enumerate}
for some currents $\sigma_1, \sigma_2, \sigma_3$ and $\sigma_4$.
Furthermore, we may rewrite the equivalences of sparks as following:
$$\left\{
    \begin{array}{ll}
      a'_1-a_1=du_1+R, \\
      a'_2-a_2=du_2+R, \\
      r'-r=-dR
    \end{array}
  \right.,
\left\{
  \begin{array}{ll}
    b'_1-b_1=dv_1+T,\\
    b'_2-b_2=dv_2+T,\\
    s'-s=-dT
  \end{array}
\right.
$$
where $v_1=\widetilde{v}_1-\sigma_4, v_2=\widetilde{v}_2-\sigma_4$.
\end{lemma}

\begin{proof}
This follows from Federer's slicing theory by making a small perturbation of $\widetilde{R}, \widetilde{u}_1, \widetilde{u}_2, \widetilde{T}$
respectively (see \cite[Theorem A.2]{HLZ}).
\end{proof}

\begin{proposition}
Suppose that $(a_1, a_2, r)\sim (a'_1, a'_2, r'), (b_1, b_2, s)\sim (b'_1, b'_2, s')$ are equivalent sparks, $(a_1, a_2, r)$ meets $(b_1, b_2, s)$ and $(a'_1, a'_2, r')$ meets
$(b'_1, b'_2, s')$ properly respectively. Then $(a'_1*b'_1, a'_2*b'_2, r'\wedge s')\sim (a_1*b_1, a_2*b_2, r\wedge s)$.
\end{proposition}

\begin{proof}
By Lemma above, we may assume that
$$\left\{
    \begin{array}{ll}
      a'_1-a_1=du_1+R, \\
      a'_2-a_2=du_2+R, \\
      r'-r=-dR
    \end{array}
  \right.,
\left\{
  \begin{array}{ll}
    b'_1-b_1=dv_1+T,\\
    b'_2-b_2=dv_2+T,\\
    s'-s=-dT
  \end{array}
\right.
$$
where $a_1\wedge T, a_2\wedge T, a_1\wedge dT, a_2\wedge dT, R\wedge s', T\wedge r$ are well defined and the last two currents are rectifiable.
Suppose that
$$\left\{
    \begin{array}{ll}
      da_1 =e_1-r, \\
      da'_1=e_1-r',\\
      da_2 =e_2-r,\\
      da'_2=e_2-r'
    \end{array}
  \right.,
\left\{
  \begin{array}{ll}
    db_1 =f_1-s,\\
    db'_1=f_1-s',\\
    db_2 =f_2-s,\\
    db'_2=f_2-s'
  \end{array}
\right.
$$
Then
\begin{align*}
a'_1*b'_1-a_1*b_1=& a'_1\wedge s'+(-1)^{k+1}e_1\wedge b'_1-a_1\wedge s-(-1)^{k+1}e_1\wedge b_1\\
                 =& (a_1+du_1+R)\wedge s'-a_1\wedge s+(-1)^{k+1}e_1\wedge (dv_1+T)\\
                 =& a_1\wedge (-dT)+du_1\wedge s'+R\wedge s'+d(e_1\wedge v_1)+(-1)^{k+1}e_1\wedge T\\
                 =& (-1)^{k+1}da_1\wedge T-a_1\wedge dT+d(u_1\wedge s'+e_1\wedge v_1)+(-1)^{k+1}r\wedge T+R\wedge s'\\
                 =& d((-1)^{k+1}a_1\wedge T+u_1\wedge s'+e_1\wedge v_1)+(-1)^{k+1}r\wedge T+R\wedge s'
\end{align*}
Similarly, $$a'_2*b'_2-a_2*b_2=d((-1)^{k+1}a_2\wedge T+u_2\wedge s'+e_2\wedge v_2)+(-1)^{k+1}r\wedge T+R\wedge s'$$
and we have $$d((-1)^{k+1}r\wedge T+R\wedge s')=r\wedge s-r'\wedge s'$$
This completes the proof.
\end{proof}

\begin{definition}
Suppose that $\alpha\in \widehat{H}^k_{D^2}(X), \beta\in \widehat{H}^{\ell}_{D^2}(X)$ with $k+\ell\leq 2m$.
For any representative $(a_1, a_2, r)\in \alpha$, choose representative $(b_1', b_2', s')\in \beta$ according to Lemma \ref{intersect properly}, we define
$$\alpha*\beta:=[(a_1*b_1', a_2*b_2', r\wedge s')]\in \widehat{H}^{k+\ell+1}_{D^2}(X)$$
\end{definition}

By Proposition above, this product is well defined. A direct computation shows that it is graded-commutative. This gives us the following result.

\begin{proposition}
$\widehat{H}^{\bullet}_{D^2}(X)$ is a graded-commutative ring.
\end{proposition}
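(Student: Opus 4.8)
The statement to prove is that $\widehat{H}^{\bullet}_{D^2}(X)$, equipped with the product $\alpha * \beta$ defined above, is a graded-commutative ring. The strategy is to reduce everything to the corresponding statement for the ordinary Harvey–Lawson spark ring $\widehat{H}^{\bullet}(X)$ (i.e. the spark complex $(D'^{\bullet}(X), \mathcal{E}^{\bullet}(X), I^{\bullet}(X))$, which is the case of a single factor), whose ring structure is established in \cite[Thm D.1]{HLZ}. The key observation is that $\widehat{H}^{\bullet}_{D^2}(X)$ sits inside $\widehat{H}^{\bullet}(X) \oplus \widehat{H}^{\bullet}(X)$ via $[(a_1,a_2,r)] \mapsto ([(a_1,r)], [(a_2,r)])$, and the product $*$ is, componentwise, exactly the Harvey–Lawson–Zweck product on each factor; the only extra datum is the diagonal bookkeeping of the integral current $r$, which is why the products $r \wedge s'$ must be controlled. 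So the whole proof is: (i) well-definedness, (ii) associativity, (iii) graded-commutativity, each inherited from the single-factor case once we check the diagonal component behaves correctly.

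First I would verify that $\alpha * \beta$ is a spark class, i.e. that $(a_1 * b_1', a_2 * b_2', r \wedge s')$ satisfies the spark equation in degree $k+\ell+1$. Using $d(a_i * b_i') = d_1 a_i \wedge d_1 b_i' + (-1)^{k+\ell} d_2 a_i \wedge d_2 b_i'$ (modulo smooth forms, the standard computation from \cite[Thm D.1]{HLZ}) together with $d_2 a_1 = d_2 a_2 = r$ and $d_2 b_1' = d_2 b_2' = s'$, one sees that the ``integral part'' of $d(a_i * b_i')$ is $\pm r \wedge s'$ in both components simultaneously — which is precisely why $\Psi(r \wedge s') = (r \wedge s', r \wedge s')$ is the right correction term and why $r \wedge s'$ must be rectifiable (so that it lies in $I^{\bullet}(X)$). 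Here the Lemma above is exactly what guarantees all these wedge products exist. Next, well-definedness: independence of the choice of representative $(a_1,a_2,r) \in \alpha$ and of the auxiliary $\xi$ follows from the single-factor statement applied to each component, noting that a spark equivalence $(b,s) \sim (b',s')$ in one component forces $s - s' = -d(\text{something})$, and since $b_1'$ and $b_2'$ are built from the \emph{same} $f_{\xi}$ and the \emph{same} $s$, the diagonal relation on integral currents is preserved throughout. I would also check that the product descends to equivalence classes by the same componentwise reduction.

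For graded-commutativity and associativity I would again push the argument to the single-factor ring: graded-commutativity of $a * b$ up to exact terms is \cite[Thm D.1]{HLZ}, and on the diagonal component $r \wedge s' = (-1)^{|r||s'|} s' \wedge r$ holds for currents whenever the product is defined, with the signs matching the Koszul convention. Associativity is the same: it holds in each of the two $\widehat{H}^{\bullet}(X)$ factors by \cite{HLZ}, and on the integral-current diagonal it is just associativity of the wedge product of currents (where defined) — one needs to invoke the Lemma once more, now for three classes, to choose representatives making all triple products simultaneously well-defined and the relevant ones rectifiable, which is a routine extension of the two-class Lemma. The main obstacle, and the only place genuinely new work is needed beyond quoting \cite{HLZ}, is confirming that the diagonal structure is consistent: that one can always choose the representative $(b_1', b_2', s')$ with the \emph{same} $s'$ in both slots (this is built into the Lemma via using one $f_{\xi}$) and that the rectifiability of $r \wedge s'$ survives all the moves — once that is pinned down, ``straightforward verification'' is an accurate description, and I would state the proof accordingly, citing the Lemma for existence of products and \cite[Thm D.1]{HLZ} for the identities modulo exact terms in each factor.
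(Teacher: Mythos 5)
Your proposal is correct and follows essentially the same route as the paper, which simply asserts that after the preceding Lemma (guaranteeing the simultaneous existence of the wedge products with a common $s'$ on the diagonal) the verification is straightforward, deferring the identities to \cite[Thm D.1]{HLZ} applied componentwise. You have in fact supplied more detail than the paper does, and your account of why the diagonal integral current $r\wedge s'$ is the correct (rectifiable) correction term is exactly the point the paper leaves implicit.
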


To define a product on $H^{\bullet}_{ABC}(X; \Z(p, q))$, we first define a product on $\widehat{H}^{\bullet}(X, p, q)$ and then reduce the product to $H^{\bullet}_{ABC}(X; \Z(p, q))$.
Note that $d_p\pi_p=\pi_pd$ and $d_q\pi'_q=\pi'_qd$.

Let
$(\pi_p, \pi'_q, id)_k:\widehat{H}^k_{D^2}(X) \rightarrow \widehat{H}^k(X, p, q)$ be the map defined by
$$[(a, b, r)] \mapsto [(\pi_p(a), \pi'_q(b), r)]$$

We first make an observation.

\begin{lemma}
$Ker(\pi_p, \pi'_q, id)_k=\{\alpha\in \widehat{H}^k_{D^2}(X)|\exists (a, b, 0)\in \alpha, a, b \mbox{ smooth }, \pi_p(a)=0, \pi'_q(b)=0\}$.
\end{lemma}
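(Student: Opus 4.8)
The plan is to prove the two inclusions separately, the easier one first. For the inclusion ``$\supseteq$'', suppose $\alpha\in\widehat{H}^k_{D^2}(X)$ admits a representative $(a,b,0)$ with $a,b$ smooth, $\pi_p(a)=0$ and $\pi'_q(b)=0$. Applying $(\pi_p,\pi'_q,id)_k$ gives the spark $(\pi_p(a),\pi'_q(b),0)=(0,0,0)$ in $\widehat{H}^k(X,p,q)$, which is the zero class; hence $\alpha\in Ker(\pi_p,\pi'_q,id)_k$. (One should check $(a,b,0)$ is genuinely a spark in $((D'^\bullet(X))^2,(\mathcal{E}^\bullet(X))^2,I^\bullet(X))$, i.e.\ that $d(a,b)=(da,db)$ lands in $(\mathcal{E}^{k+1}(X))^2$, which holds since $a,b$ are smooth, so $e=(da,db)$ and $r=0$ solve the spark equation.)

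The substance is the ``$\subseteq$'' inclusion. Start with $\alpha=[(a,b,r)]\in\widehat{H}^k_{D^2}(X)$ with $(\pi_p,\pi'_q,id)_k(\alpha)=0$, i.e.\ $[(\pi_p(a),\pi'_q(b),r)]=0$ in $\widehat{H}^k(X,p,q)$. By the definition of spark equivalence there is a pair $(c,t)\in D'^{k-1}(X,p,q)\oplus I^k(X)$ — say $c=(c_1,c_2)$ with $c_1\in D'^{<p}$-part, $c_2\in D'^{<q}$-part — such that $\pi_p(a)=d_{p,q}c_1+\pi_p(t)$ (the first component) and $\pi'_q(b)=d_{p,q}c_2+\pi'_q(t)$, together with $r=-d(t)$... more precisely $r-0=-dt$, so $dt=-r$; but $dr=0$ forces care here, so one may need first to adjust so that the integral part is handled. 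The first move is to replace $(a,b,r)$ within its $D^2$-class by $(a-dc_1'-t, b-dc_2'-t, r+dt)$ for suitable lifts; the point is that after subtracting an exact-plus-integral term we arrive at a representative $(a',b',r')$ of $\alpha$ with $\pi_p(a')=0$ and $\pi'_q(b')=0$, and $r'$ still integral with the spark equation $d(a',b')=(e_1',e_2')-(r',r')$. Since $\pi_p(a')=0$, the current $a'$ has all its $(i,j)$-components with $i<p$ vanishing, similarly $b'$; one then shows $r'=0$ and $a',b'$ can be taken smooth.

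The key mechanism for the last step is exactly the spark-complex axiom that $\mathcal{E}^\bullet(X,p,q)\cap\Psi_{p,q}(I^\bullet(X))=\{0\}$ (Appendix B of \cite{HL1}), combined with the quasi-isomorphism $\mathcal{E}^\bullet(X,p,q)\hookrightarrow D'^\bullet(X,p,q)$. From $\pi_p(a')=0$, $\pi'_q(b')=0$ we get $d_{p,q}(\pi_p(a'),\pi'_q(b'))=0$, so $(\pi_p(e_1'),\pi'_q(e_2'))=(\pi_p(r'),\pi'_q(r'))=\Psi_{p,q}(r')$; as $(e_1',e_2')$ is smooth this exhibits $\Psi_{p,q}(r')$ in the image of $\mathcal{E}^\bullet$, forcing $\Psi_{p,q}(r')=0$, hence $r'=0$ by injectivity of $\Psi$ on $I^0$ plus the degree argument (or directly, $r'$ integral with $\pi_p(r')=\pi'_q(r')=0$ means $r'$ is supported in the ``large'' bidegrees, and one argues it is zero). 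With $r'=0$, the spark equation reads $d(a',b')=(e_1',e_2')$, so $a'$ and $b'$ are $d_{p,q}$-type-truncated but in fact — using $\pi_p(a')=0$ again — $a'$ is $d$-closed in the complementary bidegrees; by the quasi-isomorphism we may modify $a'$ by $d$(something in the kernel of $\pi_p$) to make it smooth, and likewise $b'$, without changing $\alpha$. I expect the main obstacle to be bookkeeping: carefully choosing the lifts $c_1,c_2$ and the integral correction so that $\pi_p$ and $\pi'_q$ are killed \emph{simultaneously} while staying inside the $\widehat{H}^k_{D^2}(X)$-equivalence class, and then cleanly deducing smoothness of $a',b'$ from the quasi-isomorphism restricted to the $\pi_p$-, $\pi'_q$-kernels rather than on all of $D'^\bullet$.
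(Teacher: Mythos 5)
Your main step is the paper's: from $[(\pi_p(a),\pi'_q(b),r)]=0$ in $\widehat{H}^k(X,p,q)$ one extracts $(c_1,c_2)\in D'^{k-1}(X,p,q)$ and $t\in I^k(X)$ with $(\pi_p(a),\pi'_q(b))=d_{p,q}(c_1,c_2)+\Psi_{p,q}(t)$ and $r=-dt$, and then replaces $(a,b,r)$ by $(\widetilde{a},\widetilde{b},\widetilde{r})=(a-dc_1-t,\,b-dc_2-t,\,r+dt)$, which is still in $\alpha$. At that point everything is already done for the integral part: $\widetilde{r}=r+dt=0$ identically, and $\pi_p(\widetilde{a})=\pi_p(a)-\pi_p(dc_1)-\pi_p(t)=0$ because $\pi_p(dc_1)$ is exactly the first component of $d_{p,q}(c_1,c_2)$ (likewise $\pi'_q(\widetilde{b})=0$). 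You have all of this in your ``first move'', but you then set it aside and try to re-derive $r'=0$ from the type conditions, and that detour is the step that would fail: from $\pi_p(a')=\pi'_q(b')=0$ you only obtain $\Psi_{p,q}(r')=(\pi_p(r'),\pi'_q(r'))=0$, and this does \emph{not} force $r'=0$. The spark axiom $\Psi_{p,q}(I^k)\cap\mathcal{E}^k(X,p,q)=\{0\}$ says nothing about injectivity of $\Psi_{p,q}$ in positive degree, and a closed integral current can sit entirely in bidegrees $(i,j)$ with $i\geq p$ and $j\geq q$ --- e.g.\ the current of integration over a curve in a complex surface is purely of type $(1,1)$ and is annihilated by $\Psi_{1,1}$. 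So delete that paragraph and simply read $r'=0$ off from $r=-dt$, as the paper does.

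Where your write-up is more scrupulous than the paper is the smoothness clause: the paper's proof stops at ``$(\widetilde{a},\widetilde{b},0)\in\alpha$'' and never argues that $\widetilde{a},\widetilde{b}$ may be taken smooth, even though the lemma asserts it (note that in Theorem \ref{surjective homo} only $r=0$ and the vanishing of the projections are actually used). Your proposed repair is the right idea but is only sketched: with $\widetilde{r}=0$ one has $d\widetilde{a}=e_1$ smooth and $\widetilde{a}\in\ker\pi_p=\bigoplus_{i\geq p}D'^{i,\bullet}$, which is a $d$-stable subcomplex in which smooth forms include quasi-isomorphically into currents (both compute the hypercohomology of $\Omega^{\geq p}$); the relative version of \cite[Lemma 1.5]{HL2} then gives $\widetilde{a}=\omega+dS$ with $\omega$ smooth and $S\in\ker\pi_p$, so subtracting $dS$ stays in $\alpha$ and preserves $\pi_p=0$. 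If you keep the smoothness claim in the statement, this quasi-isomorphism on the truncated subcomplex needs to be stated and used explicitly rather than gestured at.
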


\begin{proof}
Suppose $(a, b, r)\in \alpha\in Ker(\pi_p, \pi'_q, id)_k$. Then there is $(a', b', s)\in D'^{k-1}(X, p, q)\oplus I^k(X)$ such that $(\pi_p(a),
\pi'_q(b))=d_{p, q}(a', b')+(\pi_p(s), \pi'_q(s))$ and $r=-ds$. Let $\widetilde{a}=a-da'-\Psi(s), \widetilde{b}=b-db'-\Psi(s)$ and
$\widetilde{r}=r+ds=0$, then $(a, b)-(\widetilde{a}, \widetilde{b})=d(a', b')+\Psi(s)$. So $(\widetilde{a}, \widetilde{b}, 0)\in \alpha$. The other
direction is clear.
\end{proof}

\begin{theorem}\label{surjective homo}
The map $(\pi_p, \pi'_q, id)_k$ is a surjective group homomorphism and the kernel of the map $(\pi_p, \pi'_q, id)_k$
is an ideal of $\widehat{H}^{\bullet}_{D^2}(X)$.
\end{theorem}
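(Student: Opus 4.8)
The plan is to verify the three assertions in turn: that $(\pi_p,\pi'_q,\mathrm{id})_k$ is a group homomorphism, that it is surjective, and that its kernel is an ideal with respect to the product on $\widehat{H}^{\bullet}_{D^2}(X)$. The homomorphism property is immediate from the definitions, since the projections $\pi_p,\pi'_q$ are linear and the spark-class operation on representatives $(a,b,r)$ is defined componentwise; one only has to note that $d_p\pi_p=\pi_p d$ and $d_q\pi'_q=\pi'_q d$, which were recorded just above, so that $(\pi_p,\pi'_q,\mathrm{id})$ indeed carries sparks to sparks and equivalences to equivalences.

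For surjectivity, given a class $[(c_1,c_2,r)]\in\widehat{H}^k(X,p,q)$ with $d_{p,q}(c_1,c_2)=(e_1,e_2)-(\pi_p(r),\pi'_q(r))$, I would lift it to $\widehat{H}^k_{D^2}(X)$. Pick any currents $a,b\in D'^k(X)$ with $\pi_p(a)=c_1$ and $\pi'_q(b)=c_2$ (e.g.\ extend by zero in the complementary bidegrees). The obstruction is that $(a,b,r)$ need not satisfy the spark equation in $(D'^{\bullet}(X))^2$: we have $d a = \widetilde e - r + (\text{terms in complementary bidegrees})$, which is not of the required form. However, one can correct $a$ and $b$ by currents supported in the complementary bidegrees so that the spark equation holds; concretely, since $\mathcal{E}^\bullet\hookrightarrow D'^\bullet$ is a quasi-isomorphism one may adjust $a$ so that $da-(\text{smooth})+r$ vanishes outside the $(i,j)$ with $i<p$, and similarly for $b$, without changing $\pi_p(a)$ or $\pi'_q(b)$ or $r$. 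This produces $(a,b,r)\in\widehat{H}^k_{D^2}(X)$ mapping to the given class.

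The main work, and the step I expect to be the genuine obstacle, is showing the kernel is an ideal. Let $\alpha\in\mathrm{Ker}(\pi_p,\pi'_q,\mathrm{id})_k$ and $\beta\in\widehat{H}^\ell_{D^2}(X)$; I want $\alpha*\beta$ to lie in the kernel. By the preceding Lemma, $\alpha$ has a representative $(a,b,0)$ with $a,b$ smooth, $\pi_p(a)=0$, $\pi'_q(b)=0$. Since $a,b$ are smooth and $r=0$, the choice of a compatible representative $(b_1',b_2',s')\in\beta$ via the product Lemma is automatic (all products with a smooth form are defined), and the product becomes
$$\alpha*\beta=[(a\wedge\widetilde e_1,\ b\wedge\widetilde e_2,\ 0)],$$
where $d(b_1',b_2')=(\widetilde e_1,\widetilde e_2)-(s',s')$. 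Now $a\wedge\widetilde e_1$ and $b\wedge\widetilde e_2$ are smooth, the third component is $0$, and it remains to check $\pi_p(a\wedge\widetilde e_1)=0$ and $\pi'_q(b\wedge\widetilde e_2)=0$ — whence $\alpha*\beta\in\mathrm{Ker}$ by the Lemma again. The point is that $\pi_p(a)=0$ means $a$ has no component of bidegree $(i,j)$ with $i<p$, i.e.\ every component of $a$ has holomorphic degree $\geq p$; wedging with any form can only increase the holomorphic degree, so every component of $a\wedge\widetilde e_1$ has holomorphic degree $\geq p$, giving $\pi_p(a\wedge\widetilde e_1)=0$. The anti-holomorphic-degree argument for $b\wedge\widetilde e_2$ is identical. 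The only subtlety to dispatch carefully is that the bidegree bookkeeping is preserved under the product construction — i.e.\ that replacing $b$ by $b_1',b_2'$ (which involves $f_{\xi*}$ and smooth corrections) does not matter here, which is clear because we have the freedom to use the smooth representative $(a,b,0)$ of $\alpha$ on the left, so the product formula collapses to the wedge with the smooth closed-up-to-$s'$ forms $\widetilde e_i$ and graded-commutativity lets us put the $a$-factor first.
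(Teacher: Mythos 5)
Your proposal is correct and follows essentially the same route as the paper: the ideal property is verified on the smooth representative $(a,b,0)$ supplied by the preceding Lemma (you make explicit the bidegree bookkeeping $\pi_p(a\wedge\widetilde e_1)=0$ that the paper leaves implicit), and surjectivity is obtained by correcting a naive lift of $(a,b,r)$ to a genuine spark in $(D'^{\bullet}(X))^2$ using the quasi-isomorphism $\mathcal{E}^{\bullet}\hookrightarrow D'^{\bullet}$, which the paper implements via \cite[Lemma 1.5]{HL2}. The only small imprecision is your claim that the correction can be done ``without changing $\pi_p(a)$'': the correcting current produced by that lemma need not live in the complementary bidegrees, so $\pi_p(a)$ may change by a $d_p$-exact term; this is harmless because one only needs equality of classes, which is exactly how the paper phrases the step, writing $(a,b)-(a_0,b_0)=(g_1,g_2)+d_{p,q}(h_1,h_2)$ with $(g_1,g_2)$ smooth.
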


\begin{proof}
Suppose $\alpha\in Ker(\pi_p, \pi'_q, id)_k, \beta\in \widehat{H}^l_{D^2}(X)$, choose representatives $(a, b, 0)\in \alpha$ such that $\pi_p(a)=0,
\pi_q'(b)=0$, and $(a', b', r')\in \beta$ such that the product is well defined. If $D(a', b')=(e_1, e_2)-(r', r')$, then
$$\alpha* \beta=[(a\wedge e_1+(-1)^{k+1}0\wedge r', b*e_2+(-1)^{k+1}0\wedge r', 0\wedge r')]=[(a\wedge e_1, b\wedge e_2, 0)]\in Ker(\pi_p, \pi'_q, id)_k$$
So the kernel is an ideal of $\widehat{H}^{\bullet}_{D^2}(X)$.

To show the surjectivity, we pick $[(a, b, r)]\in \widehat{H}^k(X, p, q)$. Then by definition, $d_{p, q}(a, b)=(e_1, e_2)-\Psi_{p, q}(r)$ and $dr=0$.
From the isomorphism $H^{k+1}(D'^{\bullet}(X)^2)\cong H^{k+1}(\mathcal{E}^{\bullet}(X)^2)$, there is $(a_0, b_0)\in D'^{k+1}(X)^2, (e_0, f_0)\in
\mathcal{E}^{k+1}(X)^2$ such that $d(a_0, b_0)=(e_0, f_0)-(r, r)$. So $d_{p, q}(a_0, b_0)=(\pi_p, \pi'_q)(e_0, f_0)-\Psi_{p, q}(r)$ and this implies
$d_{p, q}((a, b)-(a_0, b_0))=(e_1, e_2)-(\pi_p, \pi'_q)(e_0, f_0)$. By \cite[Lemma 1.5]{HL2}, $(a, b)-(a_0, b_0)=(g_1, g_2)+d_{p, q}(h_1, h_2)$ where
$(g_1, g_2)\in \mathcal{E}^k(X, p, q), (h_1, h_2)\in D'^{k-1}(X, p, q)$. Let $(\widetilde{a}, \widetilde{b})=(a_0, b_0)+(g_1, g_2)+d(h_1, h_2)$. Then
$d(\widetilde{a}, \widetilde{b})=(e_0, f_0)+d(g_1, g_2)-\Psi(r)$. This implies that $[(\widetilde{a}, \widetilde{b}, r)]\in \widehat{H}^k_{D^2}(X)$. Note that
$(\pi_p, \pi'_q)(\widetilde{a}, \widetilde{b})=(\pi_p, \pi'_q)(a_0, b_0)+(g_1, g_2)+(\pi_p, \pi'_q)d(h_1, h_2)=(\pi_p, \pi'_q)(a, b)=(a, b)$. This proves the surjectivity.
\end{proof}

\begin{definition}
Fix $p, q$. Let $\Pi_{p, q}=\bigoplus^{2n}_{k=0}(\pi_p, \pi'_q, id)_k$. Then by Theorem \ref{surjective homo}, the kernel of $\Pi_{p, q}$ is an ideal of
$\widehat{H}^{\bullet}_{D^2}(X)$ and $\Pi_{p, q}$ is surjective. So we have a group isomorphism
$$\widehat{H}^{*}(X, p, q)\cong \widehat{H}^{*}_{D^2}(X)/Ker \Pi_{p, q}$$ The right hand side has a natural ring structure and we define the ring structure
of $\widehat{H}^{*}(X, p, q)$ by this isomorphism.
\end{definition}

For $\alpha\in H^{k+1}_{ABC}(X; \Z(p, q))$ and $\beta\in H^{\ell+1}_{ABC}(X; \Z(p, q))$ where $k+\ell\leq 2m$, we consider them as elements in $\widehat{H}^k(X, p, q)$ and $\widehat{H}^{\ell}(X, p, q)$
respectively. A direct computation shows that $\alpha*\beta$ is in $H^{k+\ell+2}_{ABC}(X; \Z(p, q))$. This shows that $H^{\bullet}_{ABC}(X; \Z(p, q))$ inherits a ring structure from $\widehat{H}^{\bullet}(X, p, q)$.

\begin{corollary}
The ring structure on the Harvey-Lawson spark group $\widehat{H}^{\bullet}(X, p, q)$ induces a ring structure on the ABC cohomology $H^{\bullet}_{ABC}(X; \Z(p, q))$.
\end{corollary}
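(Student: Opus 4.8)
## Proof proposal

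The plan is to realize the ABC cohomology $H^\bullet_{ABC}(X;\Z(p,q))$ as a subring of $\widehat H^\bullet(X,p,q)$ and then invoke the corollary to the short exact sequence already established, namely
$$0\rightarrow H^{k+1}_{ABC}(X;\Z(p,q))\rightarrow \widehat H^k(X,p,q)\overset{\delta_1}{\rightarrow} Z^{k+1}_I(X,p,q)\rightarrow 0.$$
So the heart of the matter is to check that $\bigoplus_k H^{k+1}_{ABC}(X;\Z(p,q))$, viewed as $\ker\delta_1$ inside $\widehat H^\bullet(X,p,q)$, is closed under the product just constructed on $\widehat H^\bullet(X,p,q)$ — in other words that $\ker\delta_1$ is a (two-sided, graded) ideal, or at least a subring. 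Once that is known, the induced ring structure on the quotient-or-subobject is automatic and well defined, and we are done.

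First I would recall that $\delta_1$ takes a spark class $[(a,b,r)]$ to the smooth form $(e_1,e_2)$ appearing in its spark equation $d_{p,q}(a,b)=(e_1,e_2)-\Psi_{p,q}(r)$; thus $[(a,b,r)]\in\ker\delta_1$ precisely when it admits a representative with $e_1=e_2=0$, i.e. a representative of the form $(a,b,r)$ with $d_{p,q}(a,b)=-\Psi_{p,q}(r)$, equivalently $d_1(a,b)=0$ in the notation $d=d_1-d_2$. Next, given two such classes $\alpha,\beta\in\ker\delta_1\subseteq\widehat H^\bullet(X,p,q)$, I would lift them through the surjection $\Pi_{p,q}\colon\widehat H^\bullet_{D^2}(X)\rightarrow\widehat H^\bullet(X,p,q)$ of Theorem \ref{surjective homo}. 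I would then examine the product formula $a*b=a\wedge\psi+(-1)^{k+1}R\wedge b$ used to define the ring structure on $\widehat H^\bullet_{D^2}(X)$: when the spark-equation smooth parts of both factors vanish (so $\psi=d_1b=0$ and likewise for $a$), the formula collapses and the resulting class again has vanishing smooth part. Concretely, chasing through the projection $(\pi_p,\pi'_q,\mathrm{id})$ and the component-wise differential $d_{p,q}$, one checks that $\delta_1(\alpha*\beta)=0$ whenever $\delta_1\alpha=\delta_1\beta=0$, and similarly $\delta_1(\alpha*\beta)=0$ whenever just one of $\delta_1\alpha,\delta_1\beta$ vanishes — the latter giving the ideal property, exactly as in the proof of Theorem \ref{surjective homo} where the kernel of $(\pi_p,\pi'_q,\mathrm{id})_k$ was shown to be an ideal by the same collapse of the product formula.

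The main obstacle I anticipate is bookkeeping rather than conceptual: the product on $\widehat H^\bullet(X,p,q)$ is defined via the isomorphism $\widehat H^\bullet(X,p,q)\cong\widehat H^\bullet_{D^2}(X)/\ker\Pi_{p,q}$, so to speak about $\delta_1$ of a product I must first fix lifts in $\widehat H^\bullet_{D^2}(X)$, apply the wedge-product construction there (which requires the transversality Lemma to make the currents' products well defined, with $r\wedge s'$ rectifiable), and only then push forward by $\Pi_{p,q}$ and compute $\delta_1$. The care needed is to ensure that the "vanishing smooth part" condition is preserved under the choice of transverse representative $(b_1',b_2',s')$ — but this is guaranteed because the modifications $b_i'=f_{\xi*}b_i+\chi_i+\eta_i$ change $b_i$ only by a smooth form with prescribed exact-plus-closed structure, hence change $d_1b_i'$ within the same cohomology class, and in particular keep it exact when it started exact. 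After that, identifying the product on $\ker\delta_1\cong\bigoplus_k H^{k+1}_{ABC}(X;\Z(p,q))$ with a bona fide ring structure (associativity, graded-commutativity, unit) is inherited directly from $\widehat H^\bullet(X,p,q)$, and since $\ker\delta_1$ is closed under multiplication this transfers verbatim. I would close by remarking that this ring structure is compatible with the maps constructed earlier — in particular with the natural map to analytic Deligne cohomology — but a full verification of those compatibilities is routine and I would defer it.
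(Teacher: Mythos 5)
Your proposal is correct and follows the argument the paper intends (the paper states this corollary without proof, immediately after defining the product on $\widehat{H}^{\bullet}(X,p,q)$ via the quotient $\widehat{H}^{\bullet}_{D^2}(X)/\mathrm{Ker}\,\Pi_{p,q}$): identify $H^{k+1}_{ABC}(X;\Z(p,q))$ with $\ker\delta_1\subseteq\widehat{H}^{k}(X,p,q)$ and observe that the product formula $a*b=a\wedge\psi+(-1)^{k+1}R\wedge b$ gives $\delta_1(\alpha*\beta)=\delta_1\alpha\wedge\delta_1\beta$, so $\ker\delta_1$ is a graded ideal, hence closed under the product. The only nitpick is your phrase about $d_1b_i'$ changing ``within the same cohomology class'' under the transversality modification: in fact $\delta_1$ is an invariant of the equivalence class, so the smooth part of the spark equation is literally unchanged, which makes that step even simpler than you suggest.
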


If we consider the collection of all $ABC$ cohomology $\bigoplus_{k, p, q}H^k(X; \Z(p, q))$, there is also a ring structure on it.

\begin{definition}
Define a product $H^k_{ABC}(X; \Z(p, q))\times H^{\ell}_{ABC}(X; \Z(p', q')) \rightarrow H^{k+\ell}_{ABC}(X; \Z(p+p', q+q'))$ by
$$(\alpha, \beta)\mapsto (\pi_{p+p'}, \pi'_{q+q'}, id)(\widetilde{\alpha}*\widetilde{\beta})$$ where $\widetilde{\alpha}\in
\widehat{H}^{k-1}_{D^2}(X), \widetilde{\beta}\in \widehat{H}^{\ell-1}_{D^2}(X)$ are lifts of $\alpha$ and $\beta$ respectively.
\end{definition}

To verify that this product is well defined, we refer the reader to the proof of \cite[Theorem 6.6]{H} where a similar verification for Deligne
cohomology was done. The following result is also clear from the definition.

\begin{corollary}
The natural map $\bigoplus_{k, p, q} H^k_{ABC}(X; \Z(p, q)) \rightarrow \bigoplus_{k, p} H^k_{\mathscr{D}}(X; \Z(p))$ induced from the projection $[(a_1, a_2, r)]\mapsto [(a_1, r)]$
is a ring homomorphism.
\end{corollary}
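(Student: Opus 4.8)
The plan is to factor $T$ through the first-factor projection of the doubled spark group and then check that this projection is multiplicative. Write $\widehat{H}^{\bullet}(X)$ for the ordinary Harvey--Lawson spark group, i.e.\ the spark group of $(D'^{\bullet}(X),\mathcal{E}^{\bullet}(X),I^{\bullet}(X))$; its quotients $\pi_p\colon\widehat{H}^{\bullet}(X)\to\widehat{H}^{\bullet}(X,p)$ carry the Deligne cohomology, and the cross-level product on $\bigoplus_{k,p}H^k_{\mathscr{D}}(X;\Z(p))$ is obtained from the Harvey--Lawson--Zweck product on $\widehat{H}^{\bullet}(X)$ exactly as in \cite[Thm 6.6]{H}: lift, multiply in $\widehat{H}^{\bullet}(X)$, and project by $\pi_{p+p'}$. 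First I would observe that the triple of maps given by first projection, first projection, and $\mathrm{id}_{I^{\bullet}(X)}$ is a morphism of spark complexes from $((D'^{\bullet}(X))^2,(\mathcal{E}^{\bullet}(X))^2,I^{\bullet}(X))$ to $(D'^{\bullet}(X),\mathcal{E}^{\bullet}(X),I^{\bullet}(X))$ — it commutes with the differentials and with the structure maps, since $(r,r)$ projects to $r$ — and hence induces a group homomorphism $p_1\colon\widehat{H}^{\bullet}_{D^2}(X)\to\widehat{H}^{\bullet}(X)$, $[(a_1,a_2,r)]\mapsto[(a_1,r)]$.

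The main step is to show that $p_1$ respects the cross-(spark-)degree products. This should follow directly from the formula $a*b=a\wedge\psi+(-1)^{k+1}R\wedge b$ together with the Lemma preceding the definition of the product on $\widehat{H}^{\bullet}_{D^2}(X)$: given a representative $(a_1,a_2,r)$ of a class $\gamma$ and a class $\delta$, that Lemma supplies a single representative $(b_1',b_2',s')$ of $\delta$ for which every wedge product occurring in $a_1*b_1'$ and in $a_2*b_2'$ is defined, the component products compute $\gamma*\delta$, and $r\wedge s'$ is rectifiable. In particular all the transversality conditions needed to form the product of $(a_1,r)$ and $(b_1',s')$ in $\widehat{H}^{\bullet}(X)$ are already in force and $(b_1',s')$ is an admissible representative of $p_1(\delta)$, so
$$p_1(\gamma)*p_1(\delta)=[(a_1,r)]*[(b_1',s')]=[(a_1*b_1',\,r\wedge s')]=p_1\bigl([(a_1*b_1',\,a_2*b_2',\,r\wedge s')]\bigr)=p_1(\gamma*\delta),$$
where I use that the product on $\widehat{H}^{\bullet}(X)$ is well defined, hence independent of the admissible choices of representative.

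Then I would record the compatibility square
$$\pi\circ(\pi_{p+p'},\pi'_{q+q'},\mathrm{id})\ =\ \pi_{p+p'}\circ p_1\ \colon\ \widehat{H}^{\bullet}_{D^2}(X)\longrightarrow\widehat{H}^{\bullet}(X,p+p'),$$
where $\pi\colon\widehat{H}^{\bullet}(X,p,q)\to\widehat{H}^{\bullet}(X,p)$, $[(a,b,r)]\mapsto[(a,r)]$, is the projection appearing in the preceding Proposition, which restricts to $T$ on the Aeppli--Bott--Chern and Deligne subgroups; both composites send $[(a,b,r)]$ to $[(\pi_{p+p'}a,r)]$. Finally, for $\mu\in H^k_{ABC}(X;\Z(p,q))$ and $\nu\in H^{\ell}_{ABC}(X;\Z(p',q'))$ with lifts $\widetilde{\mu},\widetilde{\nu}\in\widehat{H}^{\bullet}_{D^2}(X)$, one has $\mu*\nu=(\pi_{p+p'},\pi'_{q+q'},\mathrm{id})(\widetilde{\mu}*\widetilde{\nu})$ by definition, while $p_1(\widetilde{\mu})$ and $p_1(\widetilde{\nu})$ are lifts of $T(\mu)$ and $T(\nu)$ to $\widehat{H}^{\bullet}(X)$, so $T(\mu)*T(\nu)=\pi_{p+p'}\bigl(p_1(\widetilde{\mu})*p_1(\widetilde{\nu})\bigr)$. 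Combining this with the multiplicativity of $p_1$ and the square above gives $T(\mu*\nu)=\pi_{p+p'}\bigl(p_1(\widetilde{\mu}*\widetilde{\nu})\bigr)=T(\mu)*T(\nu)$. Additivity of $T$ is immediate from its formula, and $T$ sends the unit of $\bigoplus_{k,p,q}H^k_{ABC}(X;\Z(p,q))$, lying in $H^0_{ABC}(X;\Z(0,0))$, to the unit of $\bigoplus_{k,p}H^k_{\mathscr{D}}(X;\Z(p))$, so $T$ is a ring homomorphism.

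I expect the only delicate point to be the second paragraph: one must make sure that the representatives furnished by the Lemma are legitimate representatives for the single-factor product in $\widehat{H}^{\bullet}(X)$ and that no further smooth-form corrections are needed beyond those already present, so that projecting the $\widehat{H}^{\bullet}_{D^2}(X)$-product onto the first factor genuinely reproduces the $\widehat{H}^{\bullet}(X)$-product. Granting this, the rest is a routine chase through the definitions, entirely parallel to the verification that the cross-level Aeppli--Bott--Chern product is well defined.
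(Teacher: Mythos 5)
Your proposal is correct and is exactly the verification the paper has in mind: the paper offers no argument beyond ``clear from the definition,'' and your factorization of $T$ through the first-factor projection $p_1\colon\widehat{H}^{\bullet}_{D^2}(X)\to\widehat{H}^{\bullet}(X)$, the multiplicativity of $p_1$ via the admissibility Lemma, and the compatibility square $\pi\circ(\pi_{p+p'},\pi'_{q+q'},\mathrm{id})=\pi_{p+p'}\circ p_1$ spell out precisely why it is clear. The one point you flag as delicate is indeed the crux, and your resolution of it (the Lemma's transversality conditions already contain those needed for the single-factor Harvey--Lawson--Zweck product, whose well-definedness then absorbs the choice of representative) is sound.
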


\section{K-theory and refined Chern classes}
By a result of Cheeger and Simons \cite{CS}, each smooth complex vector bundle with unitary connection $\nabla$ over a smooth manifold $X$ is assigned differential cohomology class  $\widehat{c}_k(E, \nabla)\in \widehat{H}^{2k-1}(X)$ 
for each $k\geq 0$. Such classes are called refined Chern classes as they satisfy $\delta_1(\widehat{c}_k(E, \nabla))=c_k(\Omega^{\nabla})$ the Chern-Weil form, and $\delta_2(\widehat{c}_k(E, \nabla))=c_k(E)$ the Chern class of $E$.
They also proved a Whitney product formula
$$\widehat{c}(E\oplus E', \nabla\oplus \nabla')=\widehat{c}(E, \nabla)*\widehat{c}(E', \nabla')$$
where $\widehat{c}$ is the Cheeger-Simons total refined Chern class. In this section, we are going to define refined Chern classes in ABC cohomology and prove some results analogous to the classical counterparts.
The model we use for $\widehat{H}^{\bullet}(X)$ is the spark group of the spark complex $(D'^{\bullet}(X), \mathcal{E}^{\bullet}(X), I^{\bullet}(X))$.

\begin{definition}
Let $X$ be a complex manifold and $E$ be a complex vector bundles over $X$ with unitary connection $\nabla$.
Suppose that $\widehat{c}_k(E, \nabla)=[(a, r)]\in\widehat{H}^{2k-1}(X)$. Then $[(a, \overline{a}, r)]\in \widehat{H}^{2k-1}_{D^2}(X)$. We define
$$\widehat{\widehat{c}}_k(E, \nabla)=[(a, \overline{a}, r)]$$ and
$$\widehat{f}_k(E, \nabla):=(\pi_k, \pi'_k, id)(\widehat{\widehat{c}}_k(E, \nabla))\in \widehat{H}^{2k-1}(X, k, k)$$
\end{definition}

We first observe that the product in $\widehat{H}^{\bullet}(X)$ commutes with the complex conjugation.
\begin{lemma}\label{product conjugation}
For $\alpha\in \widehat{H}^k(X), \beta\in \widehat{H}^{\ell}(X)$,
$$\overline{\alpha*\beta}=\overline{\alpha}*\overline{\beta}$$
\end{lemma}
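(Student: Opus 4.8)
The plan is to trace through the Harvey--Lawson--Zweck product formula and check that every current operation appearing in it is compatible with the complex conjugation induced by the complex structure. The key point is that conjugation is a (real-linear) ring morphism on the exterior algebra of currents: $\overline{\alpha \wedge \beta} = \overline{\alpha} \wedge \overline{\beta}$ whenever the product on the left is well defined, and it commutes with $d$ since the complex structure is integrable (so $\overline{d\alpha} = d\overline{\alpha}$). Also, integral currents are real, so conjugation fixes the "$I$-component" $R$ of a spark. I would first record these three facts, plus the observation that if $f_\xi : X \to X$ are the diffeomorphisms from the Harvey--Lawson--Zweck moving lemma, then $\overline{f_{\xi*}\gamma} = f_{\xi*}\overline{\gamma}$ because $f_\xi$ is a real map (it is not holomorphic, but push-forward of currents commutes with conjugation regardless).

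Next I would unwind the definitions. Fix $\alpha = [(a,R)] \in \widehat{H}^k(X)$ and $\beta = [(b,S)] \in \widehat{H}^\ell(X)$ with $db = \psi - S$. Choose the representative $b' = f_{\xi*}b + \chi + \eta$ of $\beta$ so that the products $a \wedge \psi$, $R \wedge b'$ etc. are all defined (here one must note that $\overline{\psi}$, being a smooth form, multiplies against anything, and $\overline{b'} = f_{\xi*}\overline{b} + \overline{\chi} + \overline{\eta}$ is a legitimate Harvey--Lawson--Zweck representative of $\overline{\beta}$ with the analogous products defined — the same $\xi$ works because the wave-front/rectifiability conditions are invariant under conjugation). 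Then $a * b' = a \wedge \psi + (-1)^{k+1} R \wedge b'$, and applying $\overline{\phantom{x}}$ termwise gives $\overline{a * b'} = \overline{a} \wedge \overline{\psi} + (-1)^{k+1} \overline{R} \wedge \overline{b'} = \overline{a} \wedge \overline{\psi} + (-1)^{k+1} R \wedge \overline{b'} = \overline{a} * \overline{b'}$, using $\overline{R} = R$. On the $I$-component the product is $R \wedge f_{\xi*}S$, and $\overline{R \wedge f_{\xi*}S} = R \wedge f_{\xi*}S$ again since both are real integral currents. Hence $\overline{\alpha * \beta} = [(\overline{a*b'}, \overline{R \wedge f_{\xi*}S})] = [(\overline{a}*\overline{b'}, R \wedge f_{\xi*}S)] = \overline{\alpha} * \overline{\beta}$, where the middle equality uses that $(\overline{b'}, f_{\xi*}S)$ represents $\overline{\beta}$ and that the product in $\widehat{H}^\bullet(X)$ is independent of the chosen (valid) representative.

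The only genuine subtlety — and the step I would treat most carefully — is the interplay between conjugation and the choice of representative $b'$: the Harvey--Lawson--Zweck construction makes choices (the parameter $\xi$, the smoothing forms $\chi, \eta$), and I must make sure that conjugating a valid choice for $\beta$ yields a valid choice for $\overline{\beta}$, so that the well-definedness of the product (already established in the ambient Cheeger--Simons ring) can be invoked on the conjugated side. Concretely: the defining conditions on $b'$ are that certain wedge products of currents exist (a condition on wave-front sets, which is stable under conjugation since $\mathrm{WF}(\overline{\gamma})$ is the image of $\mathrm{WF}(\gamma)$ under fiberwise negation) and that $R \wedge d_2 b'$ is rectifiable (rectifiability is a purely measure-geometric property, insensitive to the complex structure). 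Once this compatibility is noted, the computation above is forced, and the lemma follows. This also immediately hands us, in the subsequent discussion, that $\overline{\widehat{\widehat{c}}_k(E,\nabla)}$ behaves as expected, which is presumably why the lemma is placed here.
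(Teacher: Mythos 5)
Your proof is correct and follows essentially the same route as the paper: conjugate the explicit product formula $a\wedge\psi+(-1)^{k+1}R\wedge b$ termwise and use that the integral currents $R$, $S$ are real. You are in fact more careful than the paper's own two-line argument, which silently assumes that the conjugated representatives still yield a well-defined product; the compatibility of the Harvey--Lawson--Zweck choices (wave-front and rectifiability conditions) with conjugation that you spell out is exactly the point the paper glosses over.
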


\begin{proof}
Choose representatives $(a, R)\in \alpha, (b, S)\in \beta$ such that the product $(a, R)*(b, S)$ is well defined.
Write $da=\phi-R, db=\psi-S$. Then
$(a, R)*(b, S)=(a\wedge \psi+(-1)^{k+1}R\wedge b, R\wedge S)$
and we have
$\overline{(a, R)}*\overline{(b, S)}=(\overline{a}, R)*(\overline{b}, S)=(\overline{a}\wedge \overline{\psi}+(-1)^{k+1}R\wedge \overline{b}, R\wedge S)=\overline{(a, R)*(b, S)}$.
This gives us the desire formula.
\end{proof}

\begin{theorem}\label{Whitney formula}
Let $E$ and $F$ be two complex vector bundles on a complex manifold $X$ with unitary connections $\nabla$ and $\nabla'$ respectively. There is a Whitney product formula
\begin{enumerate}
\item
$$\widehat{\widehat{c}}(E\oplus F, \nabla\oplus \nabla')=\widehat{\widehat{c}}(E, \nabla)*\widehat{\widehat{c}}(F, \nabla').$$
\item
$$\widehat{f}(E\oplus F, \nabla\oplus \nabla')=\widehat{f}(E, \nabla)*\widehat{f}(F, \nabla')$$
\end{enumerate}
\end{theorem}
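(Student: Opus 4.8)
The plan is to deduce both product formulas from the classical Cheeger--Simons Whitney formula $\widehat{c}(E\oplus F,\nabla\oplus\nabla')=\widehat{c}(E,\nabla)*\widehat{c}(F,\nabla')$ in $\widehat{H}^{\bullet}(X)$, by tracking how the two lifting operations $[(a,R)]\mapsto[(a,\overline a,R)]$ and $(\pi_p,\pi'_q,\mathrm{id})$ interact with the ring structures. For part (1), I would first observe that the assignment $\kappa\colon\widehat{H}^{\bullet}(X)\to\widehat{H}^{\bullet}_{D^2}(X)$, $[(a,R)]\mapsto[(a,\overline a,R)]$ is a ring homomorphism: writing $\widehat{\widehat{c}}_k$ as $\kappa(\widehat c_k)$, it suffices to show $\kappa(\alpha)*\kappa(\beta)=\kappa(\alpha*\beta)$. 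Choose representatives $(a,R)\in\alpha$, $(b,S)\in\beta$ with $(a,R)*(b,S)$ well defined, so that $(a,\overline a,R)$ and $(b,\overline b,S)$ are the corresponding $D^2$-sparks. Applying the Lemma that produces $b'=f_{\xi*}b+\chi+\eta$, the same $\xi$ makes both $a\wedge(\cdot)$ and $\overline a\wedge(\cdot)$ products well defined (a conjugate current is still rectifiable, wavefront-set conditions are conjugation-invariant), so the $D^2$-product is represented by $(a*b',\,\overline a*\overline{b'},\,R\wedge S)$. Since $\overline{a*b'}=\overline a*\overline{b'}$ by Lemma~\ref{product conjugation} (applied with the same $\xi$, using that $\overline{f_{\xi*}b}=f_{\xi*}\overline b$ because $f_\xi$ is a real diffeomorphism), this equals $\kappa([(a,R)*(b,S)])=\kappa(\alpha*\beta)$. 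Summing over degrees and using the classical total-class formula gives $\widehat{\widehat c}(E\oplus F,\nabla\oplus\nabla')=\kappa(\widehat c(E\oplus F))=\kappa(\widehat c(E,\nabla)*\widehat c(F,\nabla'))=\widehat{\widehat c}(E,\nabla)*\widehat{\widehat c}(F,\nabla')$.

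For part (2), I would apply the projection $(\pi_k,\pi'_k,\mathrm{id})$ and use that the ring structure on $\widehat H^{\bullet}(X,p,q)$ was \emph{defined} so that $\Pi_{p,q}$ is a ring homomorphism onto $\widehat H^{\bullet}_{D^2}(X)/\ker\Pi_{p,q}$; more precisely I would invoke the definition of the bigraded product $H^k_{ABC}(X;\Z(p,q))\times H^{\ell}_{ABC}(X;\Z(p',q'))\to H^{k+\ell}_{ABC}(X;\Z(p+p',q+q'))$ together with its analogue for the spark groups $\widehat H$. The key compatibility to check is that $(\pi_{k+k'},\pi'_{k+k'},\mathrm{id})(\alpha*\beta)=(\pi_k,\pi'_k,\mathrm{id})(\alpha)*(\pi_{k'},\pi'_{k'},\mathrm{id})(\beta)$ when $\alpha,\beta$ are themselves in the images of $\kappa$ of Cheeger--Simons classes — this is exactly the shape of the bigraded product definition, so once (1) is in hand the formula $\widehat f(E\oplus F)=\Pi(\widehat{\widehat c}(E\oplus F))=\Pi(\widehat{\widehat c}(E)*\widehat{\widehat c}(F))=\Pi(\widehat{\widehat c}(E))*\Pi(\widehat{\widehat c}(F))=\widehat f(E,\nabla)*\widehat f(F,\nabla')$ follows, reading $\widehat f(E,\nabla)=\sum_k\widehat f_k(E,\nabla)$ as the total class and matching $\widehat f_j(E\oplus F)$ in bidegree $(j,j)$ with the appropriate sum of products $\widehat f_i(E,\nabla)*\widehat f_{j-i}(F,\nabla')$ landing in $H^{\bullet}_{ABC}$-degree $(i+(j-i),i+(j-i))=(j,j)$.

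The main obstacle, and the step deserving real care, is the simultaneous-transversality bookkeeping in part (1): one must check that a single diffeomorphism $f_\xi$ close to the identity can be chosen so that \emph{all four} families of wedge products $a\wedge f_{\xi*}(\text{comp.~of }b), \ \overline a\wedge f_{\xi*}(\text{comp.~of }\overline b), \ R\wedge f_{\xi*}(\cdot), \ \overline R\wedge f_{\xi*}(\cdot)$ are well defined and the relevant ones rectifiable, and that passing from $(b,S)$ to $(b',f_{\xi*}S)$ does not disturb the conjugate slot. This is a mild extension of the Lemma already proved in the paper (which handled the four products $a_1\wedge b'_1$, $a_2\wedge b'_2$, etc.); since $R$ is an integral, hence real, current, $\overline R=R$, which collapses two of the four conditions, and the argument is essentially the one in \cite[Thm D.1]{HLZ} applied twice. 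I would spell this out briefly and then treat the remaining algebra — compatibility of $\kappa$ with the products and compatibility of $\Pi_{p,q}$ with the bigraded product — as the routine verification it is, citing the definitions in Section 3 and the construction in \cite[Theorem 6.6]{H}.
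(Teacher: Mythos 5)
Your proposal is correct and follows essentially the same route as the paper: part (1) is deduced from the Cheeger--Simons Whitney formula together with Lemma~\ref{product conjugation} (your map $\kappa$ intertwining the products is exactly what that lemma provides), and part (2) follows by applying $(\pi_k,\pi'_k,\mathrm{id})$ and the definition of the bigraded product, which is precisely the paper's computation $\widehat{f}_k(E\oplus F)=\sum_{i+j=k}\widehat{f}_i(E)*\widehat{f}_j(F)$. Your extra care about choosing a single $\xi$ compatible with conjugation is a sensible elaboration of details the paper leaves implicit, not a different argument.
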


\begin{proof}
The first result follows from the Whitney product formula proved by Cheeger and Simons and the Lemma above. For the second result, note that by the definition of the product $*$ of $\bigoplus_{k, p, q}\widehat{H}^k(X, p, q)$ and the result above, we have
\begin{align*}
\widehat{f}_k(E\oplus F, \nabla\oplus \nabla')&=(\pi_k, \pi'_k, id)(\widehat{\widehat{c}}_k(E\oplus F, \nabla\oplus \nabla'))=\sum_{i+j=k}(\pi_k, \pi'_k, id)(\widehat{\widehat{c}}_i(E, \nabla)*\widehat{\widehat{c}}_j(F, \nabla'))\\
&=\sum_{i+j=k}\widehat{f}_i(E, \nabla)*\widehat{f}_j(F, \nabla')
\end{align*}
This gives us the desire formula.
\end{proof}

\begin{remark}
If $E$ is a hermitian bundle and $\nabla$ is the canonical connection associated to the hermitian metric of $E$, then the Chern-Weil form $c_k(\Omega^{\nabla})$
is of type $(k, k)$ and hence $\delta_1(\widehat{f}(E, \nabla))=0$. This implies that $\widehat{f}_k(E, \nabla)\in H^{2k}_{ABC}(X; \Z(k, k))$.
\end{remark}

\begin{proposition}
Let $E$ be a hermitian vector bundle over a complex manifold $X$ and $\nabla$ be the canonical connection associated to the hermitian metrics of $E$.
\begin{enumerate}
\item
The class $\widehat{f}(E, \nabla)\in H^{2k}_{ABC}(X;\Z(k, k))$ is independent of the choice of hermitian metric on $E$.
\item
Under the canonical map from $H^{2k}_{ABC}(X; \Z(k, k)) \rightarrow H^{2k}_{\mathscr{D}}(X;\Z(k))$, the class $\widehat{f}_k$ is sent to $\widehat{d}_k$ where
$\widehat{d}_k$ is the Harvey-Lawson's refined Chern class.
\end{enumerate}
\end{proposition}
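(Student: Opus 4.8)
The plan is to prove (1) by a transgression computation that keeps track of Hodge bidegrees, and (2) by unwinding the definition of $\widehat{f}_k$ against the explicit descriptions of the canonical map and of the Harvey--Lawson class. For (1) I would fix hermitian metrics $h_0,h_1$ on $E$, join them by the convex path $h_t=(1-t)h_0+th_1$, and let $\nabla_0,\nabla_1$ and $\nabla_t$ be the associated canonical (Chern) connections. The decisive point is that all the $\nabla_t$ share the same $(0,1)$-part $\overline{\partial}_E$, so $\dot\nabla_t:=\tfrac{d}{dt}\nabla_t$ is an $\mathrm{End}(E)$-valued form of type $(1,0)$ and each curvature $\Omega_t=\nabla_t^2$ is of type $(1,1)$. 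By the Cheeger--Simons transgression formula, $\widehat{c}_k(E,\nabla_1)-\widehat{c}_k(E,\nabla_0)$ is represented by the spark $(TC_k,0)$ with $TC_k=k\int_0^1\widetilde{c}_k(\dot\nabla_t,\Omega_t,\dots,\Omega_t)\,dt$ the Chern--Simons transgression form ($\widetilde{c}_k$ the polarization of $c_k$); from the bidegrees above, $TC_k$ is of pure type $(k,k-1)$ (equivalently one may take $TC_k=\tfrac{i}{4\pi}(\partial-\overline{\partial})\widetilde{c}$ for a Bott--Chern secondary $(k-1,k-1)$-form $\widetilde{c}$, reflecting that $c_k(\Omega_1)-c_k(\Omega_0)$ is $\partial\overline{\partial}$-exact, not merely $d$-exact). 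Since $[(a,r)]\mapsto[(a,\overline{a},r)]$ is additive and $(\pi_k,\pi'_k,id)$ is a homomorphism (Theorem~\ref{surjective homo}),
$$\widehat{f}_k(E,\nabla_1)-\widehat{f}_k(E,\nabla_0)=(\pi_k,\pi'_k,id)\bigl[(TC_k,\overline{TC_k},0)\bigr]=\bigl[(\pi_k TC_k,\ \pi'_k\overline{TC_k},\ 0)\bigr].$$
But $\pi_k$ kills all $(i,j)$-components with $i\ge k$ and $\pi'_k$ those with $j\ge k$, so, since $TC_k$ has type $(k,k-1)$ and $\overline{TC_k}$ type $(k-1,k)$, both entries vanish: the difference is $[(0,0,0)]=0$ in $\widehat{H}^{2k-1}(X,k,k)$, hence in $H^{2k}_{ABC}(X;\Z(k,k))$, where $\widehat{f}_k(E,\nabla)$ lies by the preceding Remark. (With the Bott--Chern form one instead checks that $b=(-\tfrac{i}{4\pi}\widetilde{c},\ \tfrac{i}{4\pi}\widetilde{c})\in D'^{2k-2}(X,k,k)$ satisfies $d_{k,k}b=(\pi_k TC_k,\pi'_k\overline{TC_k})$, so $(\pi_k TC_k,\pi'_k\overline{TC_k},0)\sim(0,0,0)$ directly.)

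For (2) I would write $\widehat{c}_k(E,\nabla)=[(a,r)]\in\widehat{H}^{2k-1}(X)$, so that $\widehat{\widehat{c}}_k(E,\nabla)=[(a,\overline{a},r)]$ and $\widehat{f}_k(E,\nabla)=(\pi_k,\pi'_k,id)[(a,\overline{a},r)]=[(\pi_k a,\pi'_k\overline{a},r)]$. By the Proposition relating $\widehat{H}^{\bullet}(X,p,q)$ to the Harvey--Lawson spark groups $\widehat{H}^{\bullet}(X,p)$, the canonical map $H^{2k}_{ABC}(X;\Z(k,k))\to H^{2k}_{\mathscr{D}}(X;\Z(k))$ is induced by the spark morphism $[(a_1,a_2,r)]\mapsto[(a_1,r)]$, which sends $\widehat{f}_k(E,\nabla)$ to $[(\pi_k a,r)]\in\widehat{H}^{2k-1}(X,k)$. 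On the other hand, by its construction (see \cite{H}) the Harvey--Lawson refined Chern class $\widehat{d}_k(E,\nabla)$ is exactly the image of $\widehat{c}_k(E,\nabla)=[(a,r)]$ under the projection spark morphism $\widehat{H}^{2k-1}(X)\to\widehat{H}^{2k-1}(X,k)$, i.e.\ $\widehat{d}_k(E,\nabla)=[(\pi_k a,r)]$; hence $\widehat{f}_k(E,\nabla)$ maps to $\widehat{d}_k(E,\nabla)$, as claimed.

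The only genuinely non-formal ingredient is the bidegree claim in (1): along a path of Chern connections only the $(1,0)$-part of the connection moves, so $\dot\nabla_t$ has type $(1,0)$ and $\Omega_t$ type $(1,1)$, putting the Chern--Simons transgression in pure bidegree $(k,k-1)$ — equivalently $c_k(\Omega_1)-c_k(\Omega_0)$ is $\partial\overline{\partial}$-exact, and this extra vanishing is precisely what lets $\pi_k$ and $\pi'_k$ annihilate the transgression. Everything else is bookkeeping with the spark equivalence relation and the homomorphisms $(\pi_p,\pi'_q,id)$; a minor point to confirm is that the normalization of $\widehat{d}_k$ in \cite{H} agrees with ``project the Cheeger--Simons class by $\pi_k$'' (if it is instead defined there through an atomic-section Chern current, one additionally verifies that that representative is spark-equivalent to $(\pi_k a,r)$).
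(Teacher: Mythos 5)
Your proof is correct, and for part (2) it is essentially the paper's argument: the paper simply says the claim ``follows directly from the definition,'' which is exactly the unwinding you perform (the canonical map is $[(a_1,a_2,r)]\mapsto[(a_1,r)]$ and $\widehat{d}_k=[(\pi_k a,r)]$). For part (1), however, you take a genuinely different route. The paper does not transgress anything: it invokes \cite[Proposition 12.1]{HL1}, which already asserts that the Harvey--Lawson class $[(\pi_k a,r)]\in H^{2k}_{\mathscr{D}}(X;\Z(k))$ is independent of the hermitian metric, writes out the resulting spark equivalence $\pi_k a_1-\pi_k a_2=\pi_k db+\pi_k(s)$, $r_1-r_2=-ds$, and then conjugates that equivalence (using that $d$ is a real operator, that the integral current $s$ is real, and that $\overline{\pi_k(\,\cdot\,)}=\pi'_k(\overline{\,\cdot\,})$) to obtain the matching relation for the second components with the \emph{same} $s$; the pair $(\pi_k b,\pi'_k\overline{b},s)$ then witnesses $\widehat{f}_k(E,\nabla_1)=\widehat{f}_k(E,\nabla_2)$. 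You instead re-derive the geometric input from scratch: along a path of Chern connections only the $(1,0)$-part moves, so the Chern--Simons transgression form has pure type $(k,k-1)$ and is annihilated by $\pi_k$ (and its conjugate by $\pi'_k$). This is precisely the mechanism underlying the cited result of Harvey--Lawson, so the two proofs ultimately rest on the same fact; yours is self-contained (modulo the Cheeger--Simons variation formula for differential characters, which you import from \cite{CS}/\cite{HLZ}) and produces a representative of the difference that is literally $(0,0,0)$, and it handles both components symmetrically at once, whereas the paper's is shorter, leans on the literature, and needs only the conjugation trick on top of it. Your closing caveat about matching the normalization of $\widehat{d}_k$ in \cite{H} is sensible but is not something the paper addresses.
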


\begin{proof}
Suppose that $\widehat{c}_k(E, \nabla_1)=[(a_1, r_1)]\in \widehat{H}^{2k-1}(X), \widehat{c}_k(E, \nabla_2)=[(a_2, r_2)]\in \widehat{H}^{2k-1}(X)$.
By \cite[Proposition 12.1]{HL1}, Harvey and Lawson showed that their refined Chern classes in Deligne cohomology are independent of the choice of hermitian metrics on $E$,
hence $[(\pi_ka_1, r)]=[(\pi_ka_2, r)]\in H^{2k}_{\mathscr{D}}(X; \Z(k))$. This means that there exist $b\in D'^k(X, k)$, $s\in I^k(X)$ such that
$$\left\{
  \begin{array}{ll}
    \pi_k a_1-\pi_k a_2=\pi_kdb+\pi_k(s), \\
    r_1-r_2=-ds,
  \end{array}
\right.$$
Note that $\overline{\pi_k a}=\pi'_k\overline{a}$ and $d$ is a real operator. By taking the complex conjugation of the first equation, we get
$$\pi'_k\overline{a}_1-\pi'_k\overline{a}_2=\pi'_kd\overline{b}+\pi'_k(s)$$
Together with equations above, this means that
$\widehat{f}_k(E, \nabla_1)=[(\pi_ka_1, \pi'_k\overline{a}_1, r_1)]=[(\pi_ka_2, \pi'_k\overline{a}_2, r_2)]=\widehat{f}_k(E, \nabla_2)$.
The class $\widehat{f}_k$ is sent to $\widehat{d}_k$ follows directly from the definition.
\end{proof}

\begin{definition}
If $E$ is a hermitian vector bundle of rank $k$ on a complex manifold $X$, since refined Chern classes of $E$ are independent of hermitian metrics on $E$, we write
$\widehat{f}_k(E)$ for $\widehat{f}_k(E, \nabla)$ where $\nabla$ is the canonical connection associated to a hermitian metric of $E$, and write
the total refined Chern class to be
$$\widehat{f}(E):=1+\widehat{f}_1(E)+\cdots +\widehat{f}_k(E)\in \bigoplus^k_{i=0}H^{2i}_{ABC}(X; \Z(i, i))$$
\end{definition}

\begin{theorem}
For any short exact sequence
$$0 \rightarrow E_1 \rightarrow E_2 \rightarrow E_3 \rightarrow 0$$
of holomorphic vector bundles over $X$, we have
$$\widehat{f}(E_2)=\widehat{f}(E_1)* \widehat{f}(E_3)$$
\end{theorem}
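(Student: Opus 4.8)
The plan is to reduce the statement to the Whitney product formula for the Cheeger--Simons refined Chern classes in ordinary differential cohomology $\widehat{H}^{\bullet}(X)$, which we already have, combined with the classical fact (due to Harvey--Lawson, \cite{HL1}) that in Deligne cohomology the refined Chern class $\widehat{d}_k$ satisfies the Whitney formula for \emph{short exact sequences} of holomorphic bundles, not merely for direct sums. The key point is that for a short exact sequence $0\to E_1\to E_2\to E_3\to 0$ of holomorphic bundles, the underlying smooth bundle $E_2$ is isomorphic to $E_1\oplus E_3$, but this smooth splitting does not respect the holomorphic structure; nevertheless one can still produce a comparison of refined Chern classes because the relevant secondary correction is of the wrong Hodge type.

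First I would recall that $\widehat{f}_k(E)$ depends only on the holomorphic bundle $E$ (by the Proposition above, it is independent of the choice of hermitian metric, hence of the choice of hermitian connection), so we are free to pick convenient metrics and connections. Choose a hermitian metric on $E_2$; it induces hermitian metrics on $E_1$ (by restriction) and on $E_3$ (via the $C^\infty$ splitting $E_2\cong E_1\oplus E_3^{\perp}$ and the identification $E_3^\perp\cong E_3$). Let $\nabla_i$ be the associated Chern connections on $E_i$. The smooth isomorphism $E_2\cong E_1\oplus E_3$ carries $\nabla_2$ to a connection which differs from $\nabla_1\oplus\nabla_3$ by the second fundamental form, a $1$-form of type $(1,0)$ (this is the standard computation with the Chern connection of a subbundle).

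Next I would invoke the Chern--Simons transgression: two connections $\nabla_2$ and $\nabla_1\oplus\nabla_3$ on the same smooth bundle have refined Chern classes differing by $\delta_1$ applied to a transgression form $T$ built from the path of connections, and the key observation is that because the second fundamental form has pure type $(1,0)$, the transgression form $T_k$ lies in $\mathcal{E}^{2k-2}(X,k,k)$, i.e. it has no component outside the truncation levels $(k,k)$ after applying $(\pi_k,\pi'_k)$. Hence the images of $\widehat{\widehat{c}}_k(E_2,\nabla_2)$ and $\widehat{\widehat{c}}_k(E_1\oplus E_3,\nabla_1\oplus\nabla_3)$ under $(\pi_k,\pi'_k,id)$ agree in $\widehat{H}^{2k-1}(X,k,k)$, so $\widehat{f}_k(E_2)=\widehat{f}_k(E_1\oplus E_3)$. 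Combined with Theorem \ref{Whitney formula}(2), which gives $\widehat{f}(E_1\oplus E_3)=\widehat{f}(E_1)*\widehat{f}(E_3)$, the result follows. Then I would note that since the Chern--Weil forms are of type $(k,k)$ for Chern connections, all these classes in fact live in $H^{2k}_{ABC}(X;\Z(k,k))$, and the product $*$ is the one defined on $\bigoplus_{k,p,q}H^k_{ABC}(X;\Z(p,q))$, so the identity holds there.

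The main obstacle I anticipate is making the Hodge-type argument on the transgression form fully rigorous: one must verify that the Chern--Simons transgression of $\det/\mathrm{tr}$-type invariant polynomials, evaluated along the affine path from $\nabla_1\oplus\nabla_3$ to $\nabla_2$, produces forms whose bidegree components all have holomorphic degree $<k$ and anti-holomorphic degree $<k$ once projected — equivalently, that the "excess" part killed by $(\pi_k,\pi'_k)$ contributes nothing. This hinges on the second fundamental form $\beta$ being of type $(1,0)$ and $\bar\beta^{\,t}$ (its adjoint, appearing in the curvature of the splitting) being of type $(0,1)$, so that the mixed curvature terms in the transgression always carry at least one factor of $\beta$ or $\bar\beta^{\,t}$, shifting them into the truncated range. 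An alternative, perhaps cleaner, route is to bypass connections entirely and argue as in \cite[Theorem 6.6]{H} and \cite[Proposition 12.1]{HL1}: deform the holomorphic structure of $E_2$ to the split one through a family, show the refined Chern class in ABC cohomology is constant in the family because the obstruction lives in an Aeppli-type group that is "filtered away," and then reduce to the direct-sum case already handled in Theorem \ref{Whitney formula}. I would present whichever of these is shorter, most likely the transgression argument, relegating the type bookkeeping to a short lemma.
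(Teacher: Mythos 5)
Your proposal correctly puts its finger on the one point where this theorem differs from Theorem \ref{Whitney formula}: a short exact sequence of holomorphic bundles splits smoothly but in general not holomorphically, so the Cheeger--Simons direct-sum formula does not apply verbatim and one must first compare $\widehat{f}_k(E_2,\nabla_2)$ with $\widehat{f}_k(E_1\oplus E_3,\nabla_1\oplus\nabla_3)$. The paper's own proof consists of the single sentence ``Similar to the proof of Theorem \ref{Whitney formula}'' and supplies none of this; your proposal is more explicit about where the actual content lies, and your intended reduction (compare the two Chern connections on the same underlying smooth bundle, then invoke part (2) of Theorem \ref{Whitney formula}) is the reduction the paper implicitly relies on via \cite{HL1} and \cite{H}.

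However, the type argument you propose for the comparison step has a genuine gap. In the metric-independence argument, the path of connections stays inside the Chern connections of a \emph{fixed} holomorphic structure, so $\dot\nabla_t$ is of pure type $(1,0)$ and the transgression $T_k$ is of pure type $(k,k-1)$, hence killed by $\pi_k$ (and $\overline{T}_k$ by $\pi'_k$). Here the two connections are the Chern connections of two \emph{different} holomorphic structures on the same smooth bundle, and their difference is
$$A=\nabla_2-(\nabla_1\oplus\nabla_3)=\begin{pmatrix}0&-\beta^{*}\\ \beta&0\end{pmatrix},$$
with $\beta$ of type $(1,0)$ but $\beta^{*}$ of type $(0,1)$. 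Consequently $T_k$ (which is a $(2k-1)$-form, not a $(2k-2)$-form as written) has components of both types $(k,k-1)$ and $(k-1,k)$; the $(k-1,k)$ component survives $\pi_k$, and its conjugate survives $\pi'_k$, so the projected classes do not agree ``for free.'' To close the gap one must show that the pair $\bigl(T_k^{k-1,k},\overline{T_k^{k-1,k}}\bigr)$ is $d_{k,k}$-exact modulo integral closed currents --- for instance via the Bott--Chern double transgression $c_k(\Omega^{\nabla_2})-c_k(\Omega^{\nabla_1\oplus\nabla_3})=c\,\partial\overline{\partial}W$ with $W$ of type $(k-1,k-1)$, together with control of the $d$-closed discrepancy between $T_k$ and $c\,\partial W$ --- or else carry out the deformation-of-the-extension-class argument you mention as an alternative, which is closer to what Harvey--Lawson and Hao actually do in the Deligne-cohomology case. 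As it stands, the step ``hence the images agree'' is asserted rather than proved, and it is exactly the step the paper also leaves unaddressed.
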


\begin{proof}
Similar to the proof of Theorem \ref{Whitney formula}.
\end{proof}

\begin{corollary}
If $X$ is a complex manifold and $K_{hol}(X)$ is the Grothendieck group of holomorphic vector bundles on $X$, then the total refined Chern class defines a natural map
$$\widehat{f}:K_{hol}(X) \rightarrow \bigoplus_{i\geq 0}H^{2i}_{ABC}(X; \Z(i, i))$$
\end{corollary}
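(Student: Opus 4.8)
The plan is to deduce this from the universal property of the Grothendieck group, taking the multiplicativity theorem just proved as the essential input, after first arranging a group structure on the target. Write $R=\bigoplus_{i\geq 0}H^{2i}_{ABC}(X;\Z(i,i))$, equipped with the product $*$ and unit $1\in H^0_{ABC}(X;\Z(0,0))$, and let $R^+=\bigoplus_{i\geq 1}H^{2i}_{ABC}(X;\Z(i,i))$ be its augmentation ideal. Since $X$ has complex dimension $m$, a current of degree $>2m$ vanishes, so $D'^k(X,p,q)=0$ and hence $\widehat H^k(X,p,q)=0$ for $k>2m$; therefore, viewing $H^{2i}_{ABC}(X;\Z(i,i))$ as a subgroup of $\widehat H^{2i-1}(X,i,i)$ via the short exact sequence of the Corollary above, it vanishes for $i\geq m+1$. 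Thus $R=\bigoplus_{i=0}^{m}H^{2i}_{ABC}(X;\Z(i,i))$ is a finite sum, the $*$-product of any element of $R^+$ with itself $m+1$ times lands in degree $>2m$ and so is zero (whenever such a product of classes of total degree exceeding $2m$ is not furnished by the Lemma requiring $k+\ell\leq 2m$, the codomain is the zero group and we simply set the product to $0$), and consequently $R^+$ is nilpotent and $1+R^+$ is a group under $*$ with inverse $(1+x)^{-1}=\sum_{j\geq 0}(-1)^j x^{*j}$, a finite sum.

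Next I would check that $E\mapsto \widehat f(E)$ is a well-defined invariant of the isomorphism class of a holomorphic vector bundle with values in $1+R^+$. Indeed, by the Remark above, for a holomorphic bundle the canonical connection $\nabla$ of a hermitian metric has curvature of type $(1,1)$, so each $\widehat f_k(E,\nabla)$ lies in $H^{2k}_{ABC}(X;\Z(k,k))$; by the Proposition above the class is independent of the chosen metric, and invariance under holomorphic isomorphism is clear. Hence $\widehat f(E)=1+\widehat f_1(E)+\cdots+\widehat f_{\mathrm{rk}E}(E)\in 1+R^+$. Moreover, using the preceding Theorem applied to split sequences, $\widehat f(E)*\widehat f(F)=\widehat f(E\oplus F)=\widehat f(F\oplus E)=\widehat f(F)*\widehat f(E)$, so the total Chern classes pairwise commute and generate an abelian subgroup $G\subseteq 1+R^+$.

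Now I would extend over $K_{hol}(X)$. Recall that $K_{hol}(X)$ is the quotient of the free abelian group on isomorphism classes of holomorphic vector bundles by the subgroup generated by $[E_2]-[E_1]-[E_3]$ over all short exact sequences $0\to E_1\to E_2\to E_3\to 0$. Send each generator $[E]$ to $\widehat f(E)\in G$ and extend by $\Z$-linearity (multiplicatively), which is legitimate since $G$ is an abelian group; by the preceding Theorem, $\widehat f(E_2)=\widehat f(E_1)*\widehat f(E_3)$, so every defining relation is sent to $1$, and the assignment descends to a group homomorphism $\widehat f\colon K_{hol}(X)\to G\subseteq R$. Finally, for naturality: given a holomorphic map $g\colon Y\to X$, the pullback $g^*$ is a ring homomorphism on ABC cohomology and commutes with the projection maps $(\pi_k,\pi'_k,\mathrm{id})$ and with complex conjugation on currents, while the Cheeger--Simons refined Chern classes $\widehat c_k$ are natural and a hermitian metric on $E$ pulls back to one on $g^*E$; chasing these through the definition of $\widehat f$ gives $g^*\widehat f(E)=\widehat f(g^*E)$ for holomorphic bundles, hence $g^*\circ\widehat f_X=\widehat f_Y\circ g^*$ on $K_{hol}$.

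The main obstacle is really the first step: making sense of the target as a group so that the Grothendieck-group relations can be imposed, which forces one to confront the partial definedness of the $*$-product (only in total degree $\leq 2m$). The resolution is the vanishing $H^{2i}_{ABC}(X;\Z(i,i))=0$ for $i>m$, which makes $R$ finite-dimensional, its augmentation ideal nilpotent, and every total Chern class invertible; after that the statement is formal, resting only on the multiplicativity theorem just proved and on naturality of its building blocks.
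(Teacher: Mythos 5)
Your proposal is correct, and it supplies in full the standard argument that the paper leaves entirely implicit (the corollary is stated without proof, as an immediate consequence of the preceding Whitney-type theorem for short exact sequences). You rightly identify and resolve the one point that genuinely needs care — making the target $1+R^+$ a group so that $\widehat f$ extends to formal differences in $K_{hol}(X)$, via the vanishing of $H^{2i}_{ABC}(X;\Z(i,i))$ for $i>\dim_{\C}X$ and the resulting nilpotence of the augmentation ideal — and your handling of the partial definedness of $*$ in degrees above $2m$ and of naturality is consistent with the paper's framework.
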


\section{ABC-cohomology of the Iwasawa manifold and its small deformations}
In this section, we compute the ABC-cohomology of the Iwasawa manifold and its small deformations. The Dolbeault cohomology of the Iwasawa manifold and its small deformations were computed by Nakamura in \cite{Nak} and the Bott-Chern and Aeppli cohomology were computed by Angella in \cite{A}. We use an expression of a system of local holomorphic coordinates given in \cite{A} and recall some results that are used in our computation.

Let
$$\H(3; \C):=\left \{ \left(
                 \begin{array}{ccc}
                   1 & z^1 & z^3 \\
                   0 & 1 & z^2 \\
                   0 & 0 & 1 \\
                 \end{array}
               \right):z^1, z^2, z^3\in \C \right\}\subset\mbox{GL}(3; \C)$$
be the 3-dimensional Heisenberg group over $\C$ and consider the action on the left of $\H(3; \Z[i]):=\H(3; \C)\cap \mbox{GL}(3; \Z[i])$ on
$\H(3; \C)$. The compact quotient
$$\I_3:=\H(3; \Z[i])\backslash \H(3; \C)$$ is call the Iwasawa manifold whose $\H(3; \C)$-left-invariant complex structure $J_0$ is the one
inherited by the standard complex structure on $\C^3$.

We recall a theorem of Nakamura \cite{Nak}.

\begin{theorem}
There exists a locally complete complex-analytic family of complex structures $\{X_{\textbf{\mbox{t}}}=(\I_3, J_{\textbf{\mbox{t}}})\}_{\textbf{\mbox{t}}\in \Delta(0, \epsilon)}$, deformations
of $\I_3$, depending on
$$\textbf{\mbox{t}}=(t_{11}, t_{12}, t_{21}, t_{22}, t_{31}, t_{32})\in \Delta(0, \epsilon)\subset \C^6$$
where $\Delta(0, \epsilon)$ is a disc centered at $0\in \C^6$ with a small radius $\epsilon$ and $X_0=\I_3$.
\end{theorem}
There is a set of holomorphic coordinates $\xi^1_{\textbf{\mbox{t}}}, \xi^2_{\textbf{\mbox{t}}}, \xi^3_{\textbf{\mbox{t}}}$ for $X_{\textbf{\mbox{t}}}$ depending on $\textbf{\mbox{t}}$ and the local coordinates of $X_0$. Since we do not need their precise
expressions, we refer the reader to \cite[Theorem 3.1]{A}. Let
$$\varphi^1_{\textbf{\mbox{t}}}:=d\xi^1_{\textbf{\mbox{t}}}, \varphi^2_{\textbf{\mbox{t}}}:=d\xi^2_{\textbf{\mbox{t}}} \mbox{ and }
\varphi^3_{\textbf{\mbox{t}}}:=d\xi^3_{\textbf{\mbox{t}}}-z^1d\xi^2_{\textbf{\mbox{t}}}-(t_{21}\overline{z}^1+t_{22}\overline{z}^2)d\xi^1_{\textbf{\mbox{t}}}$$
Complex numbers $\sigma_{1\overline{1}}, \sigma_{1\overline{2}}, \sigma_{2\overline{1}}, \sigma_{2\overline{2}}$ and $\sigma_{12}$
depending only on $\textbf{\mbox{t}}$ are defined through the following equation

$$d\varphi^3_{\textbf{\mbox{t}}}=\sigma_{12}\varphi^1_{\textbf{\mbox{t}}}\wedge \varphi^2_{\textbf{\mbox{t}}}+\sigma_{1\overline{1}}\varphi^1_{\textbf{\mbox{t}}}\wedge\overline{\varphi}^1_{\textbf{\mbox{t}}}+
\sigma_{1\overline{2}}\varphi^1_{\textbf{\mbox{t}}}\wedge \overline{\varphi}^2_{\textbf{\mbox{t}}}+
\sigma_{2\overline{1}}\varphi^2_{\textbf{\mbox{t}}}\wedge\overline{\varphi}^1_{\textbf{\mbox{t}}}+\sigma_{2\overline{2}}\varphi^2_{\textbf{\mbox{t}}}
\wedge \overline{\varphi}^2_{\textbf{\mbox{t}}}{\textbf{\mbox{t}}}$$

Let $$D({\textbf{\mbox{t}}}):=\mbox{det}\left(
                                          \begin{array}{cc}
                                            t_{11} & t_{12} \\
                                            t_{21} & t_{22} \\
                                          \end{array}
                                        \right)
$$ and
$$S:=\left(
       \begin{array}{cccc}
         \overline{\sigma_{1\overline{1}}} & \overline{\sigma_{2\overline{2}}} & \overline{\sigma_{1\overline{2}}} & \overline{\sigma_{2\overline{1}}} \\
         \sigma_{1\overline{1}} & \sigma_{2\overline{2}} & \sigma_{1\overline{2}} & \sigma_{2\overline{1}} \\
       \end{array}
     \right)$$

Recall that Nakamura classified the small deformations of $\I_3$ into 3 classes: (i), (ii), (iii), and Angella further subdivided
class (ii) into (ii.a) and (ii.b), class (iii) into (iii.a) and (iii.b) by using the Bott-Chern cohomology of $X_{\textbf{\mbox{t}}}$. The classification is given in the following list.
\begin{description}
\item[class (i)]: $t_{11}=t_{12}=t_{21}=t_{22}=0$;
\item[class (ii)]: $D(\textbf{\mbox{t}})=0$ and $(t_{11}, t_{12}, t_{21}, t_{22})\neq (0, 0, 0, 0)$;
                 \begin{description}
                  \item[subclass (ii.a)]: $D(\textbf{\mbox{t}})=0$ and rk$S$=1;
                  \item[subclass (ii.b)]: $D(\textbf{\mbox{t}})=0$ and rk$S$=2;
                  \end{description}
\item[class (iii)]: $D(\textbf{\mbox{t}})\neq 0$;
\begin{description}
\item[subclass (iii.a)]:$D(\textbf{\mbox{t}})\neq 0$ and rk$S$=1;
\item[subclass (iii.b)]:$D(\textbf{\mbox{t}})\neq 0$ and rk$S$=2;
\end{description}
\end{description}

The set $\{\varphi^1_{\textbf{\mbox{t}}}, \varphi^2_{\textbf{\mbox{t}}}, \varphi^3_{\textbf{\mbox{t}}}\}$
is a co-frame of $(1, 0)$-forms on $X_{\textbf{\mbox{t}}}$. The structure equations for
${\textbf{\mbox{t}}}$ in class (i) are
$$
\left\{
  \begin{array}{ll}
    d\varphi^1_{\textbf{\mbox{t}}}=0 \\
    d\varphi^2_{\textbf{\mbox{t}}}=0\\
d\varphi^3_{\textbf{\mbox{t}}}=-\varphi^1_{\textbf{\mbox{t}}}\wedge \varphi^2_{\textbf{\mbox{t}}}\\
\end{array}
\right.
$$

The structure equations for $\textbf{\mbox{t}}$ in class $(ii)$ and $(iii)$ are
$$
\left\{
  \begin{array}{ll}
    d\varphi^1_{\textbf{\mbox{t}}}=0 \\
    d\varphi^2_{\textbf{\mbox{t}}}=0\\
    d\varphi^3_{\textbf{\mbox{t}}}=\sigma_{12}\varphi^1_{\textbf{\mbox{t}}}\wedge \varphi^2_{\textbf{\mbox{t}}}+\sigma_{1\overline{1}}\varphi^1_{\textbf{\mbox{t}}}\wedge\overline{\varphi}^1_{\textbf{\mbox{t}}}+
\sigma_{1\overline{2}}\varphi^1_{\textbf{\mbox{t}}}\wedge \overline{\varphi}^2_{\textbf{\mbox{t}}}+
\sigma_{2\overline{1}}\varphi^2_{\textbf{\mbox{t}}}\wedge\overline{\varphi}^1_{\textbf{\mbox{t}}}+
\sigma_{2\overline{2}}\varphi^2_{\textbf{\mbox{t}}}
\wedge \overline{\varphi}^2_{\textbf{\mbox{t}}}{\textbf{\mbox{t}}}
  \end{array}
\right.
$$

The first step towards our computation of the ABC cohomology of $X_{\textbf{\mbox{t}}}$ is to compute the cohomology group $H^k(\mathcal{E}'^{\bullet}(X_{\textbf{\mbox{t}}}, p))$ for all $k, p$
where $\mathcal{E}'^{\bullet}(X_{\textbf{\mbox{t}}}, p)=\pi'_p(\mathcal{E}^{\bullet}(X_\textbf{\mbox{t}}))$.
To do this, we reduce the computation to the corresponding cohomology of its Lie algebra $\mathcal{G}$. Similar reduction for Bott-Chern cohomology is given in \cite[Theorem 3.7]{A}.
The hypothesis of the following result is satisfied by the Iwasawa manifold and its small deformations.

\begin{proposition}
Let $X=\Gamma\backslash G$ be a solvmanifold endowed with a $G$-left-invariant complex
structure $J$, and $\mathcal{G}$ be the Lie algebra naturally associated with $G$. Denote by $\mathcal{E}'^k_{\mathcal{G}}(X, p)=\pi'_p(\mathcal{E}^k_{\mathcal{G}}(X))$ where
$\mathcal{E}^k_{\mathcal{G}}$ is the vector space of all $G$-left-invariant $k$-forms on $X$. If the De Rham cohomology, $\partial$-cohomology and Bott-Chern cohomology of
$X$ can be computed by the complex of $G$-left-invariant forms,
then the inclusion of the subcomplex $i:\mathcal{E}'^{\bullet}_{\mathcal{G}}(X, p)\hookrightarrow \mathcal{E}'^{\bullet}(X, p)$ is a quasi-isomorphism, which means that
the induced homomorphism $$i_*:H^k(\mathcal{E}'^{\bullet}_{\mathcal{G}}(X, p)) \rightarrow H^k(\mathcal{E}'^{\bullet}(X, p))$$ is an isomorphism for all $k, p\in \Z$.
\end{proposition}

\begin{proof}
For $[\alpha]\in H^k(\mathcal{E}'^{\bullet}(X, p))$, write $\alpha=\alpha^{k, 0}+\cdots+\alpha^{k-p+1, p-1}$. Then from $d'_p\alpha=0$, we get a system of equations
$$\left\{
  \begin{array}{ll}
    \partial \alpha^{k, 0}=0 \Rightarrow \overline{\partial}\partial \alpha^{k, 0}=0,\\
    \overline{\partial}\alpha^{k, 0}+\partial \alpha^{k-1, 1}=0 \Rightarrow \overline{\partial}\partial \alpha^{k-1, 1}=0\\
    \vdots \\
    \overline{\partial}\alpha^{k-q+2, q-2}+\partial \alpha^{k-p+1, p-1}=0\Rightarrow \overline{\partial}\partial \alpha^{k-p+1, p-1}=0\\
  \end{array}
\right.$$
Since by our assumption, the $\partial$-cohomology and Bott-Chern cohomology of $X$ can be computed by $G$-left-invariant forms, so there exist $\beta^{i, j}\in \mathcal{E}'^k_{\mathcal{G}}(X, p)$
for $j=0, 1, ..., k-1$ such that $\alpha^{k, 0}-\beta^{k, 0}=\partial \gamma^{k-1, 0}$, $\alpha^{i, j}-\beta^{i, j}=\partial\overline{\partial}\eta^{i-1, j-1}$ for $j=1, ..., k-1$.
Let $\beta=\beta^{k, 0}+\cdots +\beta^{k-p+1, p-1}\in \mathcal{E}'^k_{\mathcal{G}}(X, p)$.
Then $\alpha-\beta=d'_p(\gamma^{k-1, 0}+\overline{\partial}\eta^{k-1, 1}+\cdots +\overline{\partial}\eta^{k-p, p-2})$. This shows that $i_*$ is surjective.
If $[\omega]\in H^k(\mathcal{E}'^{\bullet}_{\mathcal{G}}(X, p))$ and $i_*[\omega]=0$, we have $\omega=d'_p\rho$ for some $\rho\in \mathcal{E}'^{k-1}(X, p)$.
Comparing the degrees of both sides of this equation, we have
$$\left\{
  \begin{array}{ll}
    \omega^{k, 0}+\cdots+\omega^{k-p+2, p-2}=d(\rho^{k-1, 0}+\cdots+\rho^{k-p+1, p-2}), \\
    \omega^{k-p+1, p-1}=\partial \rho^{k-p, p-1}
  \end{array}
\right.$$

By our assumption, the De Rham cohomology and $\partial$-cohomology of $X$ can be computed by $G$-left-invariant forms, there are $G$-left-invariant $\eta$ and $\tau$ such that
$\omega^{k, 0}+\cdots +\omega^{k-p+2, p-2}=d\eta, \omega^{k-p+1, p-1}=\partial \tau$. Then $\omega=d\eta+\partial \tau=d'_p(\eta+\tau)$ and $\eta+\tau\in \mathcal{E}'^{k+1}_{\mathcal{G}}(X, p)$. This shows that the homomorphism $i_*$ is injective.
\end{proof}

The second step is to show that the integral cohomology groups of the Iwasawa manifold is torsion-free. We combine results developed in \cite{CP, LP} for this goal. The main
 tool we use is the following theorem \cite[Theorem 3]{CP}. For $q\in \N$, let $\Z\{q\}=\Z[\frac{1}{2}, \cdots, \frac{1}{q}]$.
 \begin{theorem}
 For any nilmanifold $N$, $H^*(N; \Z\{q\})$ and $H^*(FL(N); \Z\{q\})$ are isomorphic rings
 where $FL(N)$ denotes the formal group Lie algebra of the fundamental group $G:=\pi_1(N)$ and
 $q\geq d(N)$, where $d(N)$ is equal to the finite sum
 $$d(N)=1+|G_1/G_2|+2|G_2/G_3|+3|G_3/G_4|+\cdots+k|G_k/G_{k+1}|+\cdots$$
 and $G_1\supset G_2\supset G_3\supset \cdots$
 is a descending series of $G$ (see \cite[pg 74]{CP}).
 \end{theorem}

If $N$ is the Iwasawa manifold, after some computation, we get $d(N)=1$ and all cohomology groups $H^*(FL(N); \Z)$ are torsion-free. This implies that the integral cohomology groups of Iwasawa
manifold are torsion-free. Since it is diffeomorphic to its small deformations, we have the following result.

\begin{corollary}
All integral cohomology groups of the Iwasawa manifold and its small deformations are torsion-free.
\end{corollary}

\begin{lemma}
Let $X$ be a complex manifold. Suppose that $H^k(X; \Z)$ is torsion-free. Then rk$H^k_I(\mathcal{E}'^{\bullet}(X, p, q))=\mbox{dim}_{\C} (\pi_{p*}, \pi'_{q*})\mathcal{D}$ where $\mathcal{D}:=\{([\alpha], [\alpha])|[\alpha]\in H^k(\mathcal{E}^{\bullet}(X))\}$ is the
diagonal of $H^k(\mathcal{E}^{\bullet}(X))\oplus H^k(\mathcal{E}^{\bullet}(X))$.
\end{lemma}

\begin{proof}
The inclusion $i:\mathcal{E}^k(X) \hookrightarrow D'^k(X)$ is a quasi-isomorphism, and with the inclusion $j:I^k(X) \hookrightarrow D'^k(X)$,
we have a group homomorphism $\ell_*:=i_*^{-1}\circ j_*: H^k(I^{\bullet}(X)) \rightarrow H^k(\mathcal{E}^{\bullet}(X))$.  Let $\{\phi_1, ..., \phi_n\}$ be a basis of $H^k(I^{\bullet}(X))$.
Since $H^k(X; \Z)$ is torsion-free, the map $\ell_*$ is injective, and hence the rank of the image Im$(\ell_*, \ell_*)$ is $n$.
Let $D_{\R}$ be the real vector subspace of $\mathcal{D}$ obtained by taking linear combination of $\{(\ell_*, \ell_*)(\phi_j, \phi_j)|j=1, ..., n\}$ with real coefficients. Then from the fact $\mbox{dim}_{\R}D_{\R}\leq n$ and $\mbox{Im}(\ell_*, \ell_*)\subset D_{\R}$, we get
$n=\mbox{dim}_{\R}D_{\R}=\mbox{dim}_{\C}\mathcal{D}$.

We have the following commutative diagram
$$\xymatrix{H^k(I^{\bullet}(X)) \ar[r]^-{(\ell_*, \ell_*)} \ar[rd]& \mbox{Im}(\ell_*, \ell_*) \ar[d]^{(\pi_{p*}, \pi'_{q*})} & \hooklongrightarrow & \mathcal{D}_{\R} \ar[d]^{(\pi_{p*}, \pi'_{q*})} &\hooklongrightarrow& \mathcal{D}  \ar[d]^{(\pi_{p*}, \pi'_{q*})}\\
                        & H^k_I(\mathcal{E}^{\bullet}(X, p, q))& \hooklongrightarrow  & (\pi_{p*}, \pi'_{q*})(D_{\R})  &\hooklongrightarrow& (\pi_{p*}, \pi'_{q*})(\mathcal{D}) \\
}$$
and rk$H^k_I(\mathcal{E}^{\bullet}(X, p, q))\leq \mbox{dim}_{\R}(\pi_{p*}, \pi'_{q*})(D_{\R})$.
Since $(\pi_{p*}, \pi'_{q*})\{(\ell_*, \ell_*)(\phi_j, \phi_j)|j=1, ..., n\}$ is a generating set over $\Z$ for $H^k_I(\mathcal{E}^{\bullet}(X, p, q))$, over $\R$ for $(\pi_{p*}, \pi'_{q*})D_{\R}$ respectively, we have
rk$H^k_I(\mathcal{E}^{\bullet}(X, p, q))\geq \mbox{dim}_{\R}(\pi_{p*}, \pi'_{q*})(D_{\R})$. Therefore rk$H^k_I(\mathcal{E}^{\bullet}(X, p, q))=\mbox{dim}_{\R}(\pi_{p*}, \pi'_{q*})(D_{\R})=\mbox{dim}_{\C}(\pi_{p*}, \pi'_{q*})(\mathcal{D})$.
\end{proof}

\begin{lemma}\label{ABC}
If $X$ is a complex manifold and $H^k(X; \Z)$ is torsion-free,
we may write
$$H^k_{ABC}(X; \Z(p, q))\cong\Z^A\oplus (\C/\Z)^B\oplus \C^C$$
where $A=\mbox{rk}H^k(X; \Z)-\mbox{rk}H^{k}_I(D'^{\bullet}(X, p, q))$, $B=\mbox{rk}H^{k-1}_I(D'^{\bullet}(X, p, q))$ and $C=\mbox{dim}H^{k-1}(D'^{\bullet}(X, p, q))-\mbox{rk}H^{k-1}_I(D'^{\bullet}(X, p, q))$.
\end{lemma}

\begin{proof}
Note that since $H^k(X; \Z)$ is torsion-free for all $k\geq 0$, by the $3\times 3$-grid (Proposition \ref{3x3}), we have
$$H^k_{ABC}(X; \Z(p, q))\cong \mbox{Ker}^{k-1}(\Psi_{p, q})_*\oplus \frac{H^{k-1}(D'^{\bullet}(X, p, q))}{H^{k-1}_I(D'^{\bullet}(X, p, q))}\cong \Z^A\oplus (\C/\Z)^B\oplus \C^C$$
as required.
\end{proof}

We list the procedure of our computation of $H^k_{ABC}(X_{\textbf{\mbox{t}}}; \Z(p, q))$ in the following where $X_{\textbf{\mbox{t}}}$ is a small deformation of $\I_3$.
\begin{description}
\item[Step 1]: Find a basis $\psi_1, ..., \psi_s$ consisting of left-invariant forms of $H^k(\mathcal{E}^{\bullet}_{\mathcal{G}}(X_{\textbf{\mbox{t}}}))$.
\item[Step 2]: Compute the dimension of the space generated by $(\pi_{p*}, \pi_{q*}')(\psi_j, \psi_j)$ for $j=1, ..., s$. This gives the dimension of $H^k_I(\mathcal{E}^{\bullet}(X_{\textbf{\mbox{t}}}, p, q))$.
\item[Step 3]: Compute the dimension of $H^k(\mathcal{E}'^{\bullet}_{\mathcal{G}}(X_{\textbf{\mbox{t}}}, q))$. This is equal to the dimension of $H^k(\mathcal{E}^{\bullet}(X_{\textbf{\mbox{t}}}, q))$ and we get the dimension of the group
              $H^k(X_{\textbf{\mbox{t}}}, p, q)$.
\item[Step 4]: Calculate the integers $A, B, C$ as given in Lemma \ref{ABC}.
\end{description}

The following table records the complex dimension of $H^k(\mathcal{E}'^{\bullet}(\I_3, p))$.

\begin{center}
$
\begin{array}{|c | c | c| c |c|c|}
\hline
p \backslash k & 1 & 2 & 3 & 4 & 5 \\
\hline
1              & 2 & 2 & 1 & 0  & 0 \\
\hline
2              & 5 & 9 & 8 & 3 & 0\\
\hline
3              & 4 & 8 & 9 & 7 & 3 \\
\hline
4              & 4 & 8 & 10 & 8 & 4  \\
\hline
\end{array}$
\end{center}

The following table records the complex dimension of $H^k(\mathcal{E}^{\bullet}(\I_3, p, q))$ and the rank of $H^k_I(\mathcal{E}^{\bullet}(\I_3, p, q))$.
Note that $H^k(\mathcal{E}^{\bullet}(\I_3, p, q))\cong H^k(\mathcal{E}^{\bullet}(\I_3, p))\oplus H^k(\mathcal{E}'^{\bullet}(\I_3, q))$ and
the complex conjugation induces an isomorphism between $H^k(\mathcal{E}^{\bullet}(\I_3, p))$ and $H^k(\mathcal{E}'^{\bullet}(\I_3, p))$.
Furthermore,
$H^k(\mathcal{E}^{\bullet}(\I_3, p, q))\cong H^k(\mathcal{E}^{\bullet}(\I_3, q, p))$.

\begin{center}
$
\begin{array}{|c | c | c| c |c|c||c|c|c|c|c|}
\hline
&&&H^k(\mathcal{E}^{\bullet}(\I_3, p, q)) &&&  &&& H^k_I(\mathcal{E}^{\bullet}(\I_3, p, q)) &\\
\hline
(p, q) \backslash k & 1 & 2 & 3 & 4 & 5 &     1 & 2 & 3  & 4 & 5\\
\hline
\hline
(1, 1)              & 4 &  4 &  2 & 0  & 0 &  4 & 4 & 2  & 0 & 0\\
\hline
(1, 2)              & 7 & 11 &  9 & 3  & 0 &  4 & 8 & 6  & 2 & 0\\
\hline
(1, 3)              & 6 & 10 & 10 & 7  & 3 &  4 & 8 & 8  & 6 & 2\\
\hline
(1, 4)              & 6  & 10 & 11 & 8 & 4 &  4 & 8 & 10 & 8 & 4\\
\hline
(2, 2)              & 10 & 18 & 16 & 6 & 0 &  4 & 8 & 10 & 4 & 0\\
\hline
(2, 3)              & 9 & 17 & 17 & 10& 3&    4 & 8 & 10 & 8 & 2\\
\hline
(2, 4)              & 9 & 17 & 18& 11 & 4&    4 & 8 & 10 & 8 & 4\\
\hline
(3, 3)              & 8 & 16 & 18 & 14 & 6&   4 & 8 & 10 & 8 & 4\\
\hline
(3, 4)             & 8 & 16 & 19 & 15 & 7&    4 & 8 & 10 & 8 & 4\\
\hline
(4, 4)             & 8 & 16 & 20 & 16 & 8 &   4 & 8 & 10 & 8 & 4\\
\hline
\end{array}
$
\end{center}

In the following table, we compute $H^k_{ABC}(\I_3; \Z(p, q))$. Each triple in the entries denotes $(A, B, C)$ where $A, B, C$ are given in Lemma \ref{ABC}.

\begin{center}
$
\begin{array}{|c | c | c| c |c|c|c|}
\hline
(p, q) \backslash k & 1 & 2     & 3          & 4          & 5         & 6\\
\hline
\hline
(1, 1) & (0, 1, 0) & (4, 4, 0)  & (8, 4, 0)  & (8, 2, 0)  & (4, 0, 0) & (0, 0, 0)\\
\hline
(1, 2) & (0, 1, 0) & (0, 4, 3)  & (4, 8, 3)  & (6, 6, 3)  & (4, 2, 1) & (0, 0, 0)\\
\hline
(1, 3) & (0, 1, 0) & (0, 4, 2)  & (2, 8, 2)  & (2, 8, 2)  & (2, 6, 1) & (0, 2, 1) \\
\hline
(1, 4) & (0, 1, 2) & (0, 4, 2)  & (0, 8, 2)  & (0, 10, 1) & (0, 8, 0) & (0, 4, 0)  \\
\hline
(2, 2) & (0, 1, 0) & (0, 4, 6)  & (0, 8, 10) & (4, 10, 6) & (4, 4, 2) & (0, 0, 0)\\
\hline
(2, 3) & (0, 1, 0) & (0, 4, 5)  & (0, 8, 9)  & (0, 10, 7) & (2, 8, 2) & (0, 2, 1) \\
\hline
(2, 4) & (0, 1, 0) & (0, 4, 5)  & (0, 8, 8)  & (0, 10, 8) & (0, 8, 3) & (0, 4, 0)  \\
\hline
(3, 3) & (0, 1, 0) & (0, 4, 4)  & (0, 8, 8)  & (0, 10, 8) & (0, 8, 6) & (0, 4, 2)   \\
\hline
(3, 4) & (0, 1, 0) & (0, 4, 4)  & (0, 8, 8)  & (0, 10, 9) & (0, 8, 7) & (0, 4, 3)\\
\hline
(4, 4) & (0, 1, 0) & (0, 4, 4)  & (0, 8, 8)  & (0, 10, 10)& (0, 8, 8) & (0, 4, 4)\\
\hline
\end{array}$
\end{center}

Now we turn to a much more involved computation of the ABC cohomology of $X_{\textbf{\mbox{t}}}$.

The following table records the complex dimension of $H^k(\mathcal{E}^{\bullet}(X_{\textbf{\mbox{t}}}, p))$.

\begin{center}
$
\begin{array}{|c | c | c| c |c|c|}
\hline
p \backslash k & 1 & 2 & 3 & 4 & 5 \\
\hline
1              & 2 & 2 & 1 & 0 & 0 \\
\hline
2              &4 &7 &\mbox{if rkT=1}, 6 &\mbox{if rkT=1}, 2 &0 \\
               &  &  &\mbox{if rkT=2}, 5 &\mbox{if rkT=2}, 1 &  \\
\hline
3              & 4& 8&9 &6 & 2 \\
\hline
4              & 4& 8&10 &8 &4  \\
\hline
\end{array}$
\end{center}

Note that $H^k(\mathcal{E}^{\bullet}(X_{\textbf{\mbox{t}}}, p, q))\cong H^k(\mathcal{E}^{\bullet}(X_{\textbf{\mbox{t}}}, p))\oplus H^k(\mathcal{E}'^{\bullet}(X_{\textbf{\mbox{t}}}, q))$ and
the complex conjugation induces an isomorphism between $H^k(\mathcal{E}^{\bullet}(X_{\textbf{\mbox{t}}}, p))$ and $H^k(\mathcal{E}'^{\bullet}(X_{\textbf{\mbox{t}}}, p))$.
Furthermore,
$H^k(\mathcal{E}^{\bullet}(X_{\textbf{\mbox{t}}}, p, q))\cong H^k(\mathcal{E}^{\bullet}(X_{\textbf{\mbox{t}}}, q, p))$.
Let $N(T)$ denote the number of nonzero entries of $T$ where
$T=\left(
     \begin{array}{cc}
       \sigma_{1\overline{1}} & \sigma_{1\overline{2}} \\
       \sigma_{2\overline{1}} & \sigma_{2\overline{2}} \\
     \end{array}
   \right)
$.
\begin{center}
$
\begin{array}{|c | c | c| c |c|c||c|c|c|c|c|}
\hline
&&& H^k(\mathcal{E}^{\bullet}(X_t, p, q))   &&&  &&&  H^k_I(\mathcal{E}^{\bullet}(X_t, p, q))& \\
\hline
(p, q) \backslash k & 1 & 2 & 3 & 4 & 5 & 1 & 2 & 3 & 4 & 5\\
\hline
(1, 1)              & 4 & 4 & 2 & 0 & 0 & 4 & 4 & 2 & 0 & 0\\
\hline
(1, 2) & 6 & 9 &\mbox{if rkT=1}, 7 & \mbox{if rkT=1}, 2 & 0 & 4 & 7 & 6 & \mbox{if rkT=1}, 2 & 0\\
       &   &   &\mbox{if rkT=2}, 6 & \mbox{if rkT=2}, 1 & 0 &   & &     & \mbox{if rkT=2}, 1 &\\
\hline
(1, 3) & 6 & 10 & 10 & 6 & 2  & 4 & 8 & 10 & \mbox{if rkT=1}, 5& 2\\
       &   &    &    &   &    &   &   &    & \mbox{if rkT=2}, 4&\\
\hline
(1, 4) & 6 & 10 & 11 & 8 & 4 & 4  & 8 & 10 & 8 & 4\\
\hline
(2, 2) & 8 & 14  & \mbox{ if rkT=1}, 12 & \mbox{if rkT=1}, 4 & 0   & 4 & 7 & 10 & \mbox{if rkT=1 and N(T)}=1,     3& 0\\
       &   &     & \mbox{ if rkT=2}, 10 & \mbox{if rkT=2}, 2 &     &&&          & \mbox{if rkT=1 and N(T)}\geq 2, 4&\\
       &   &     &                      &                    &     &&&          & \mbox{if rkT=2}, 2&\\
\hline
(2, 3) & 8 & 15  & \mbox{if rkT=1}, 15& \mbox{if rkT=1}, 8   & 2   & 4 & 8 & 10 & \mbox{if rkT=1}, 6& 2\\
       &   &     & \mbox{if rkT=2}, 14& \mbox{if rkT=2}, 7   &     &&&          & \mbox{if rkT=2}, 4&\\
\hline
(2, 4) & 8 & 15  &\mbox{if rkT=1}, 16  & \mbox{if rkT=1},   10   &  4  & 4 & 8 & 10 & 8& 4\\
       &   &     &\mbox{if rkT=2}, 15  & \mbox{if rkT=2},    9  &    &&&&&\\
\hline
(3, 3) & 8 & 16 & 18 & 12 & 4   & 4 & 8 & 10 & \mbox{if rkT=1}, 6& 4\\
       &   &    &    &    &     &   &   &    & \mbox{if rkT=2}, 4&\\
\hline
(3, 4) & 8 & 16 & 19 & 14 & 6   & 4 & 8 & 10 & 8& 4\\
\hline
(4, 4) & 8 & 16 & 20 & 16 & 8   & 4 & 8 & 10 & 8& 4\\
\hline
\end{array}$
\end{center}

In the following table, we compute $H^k_{ABC}(X_{\textbf{\mbox{t}}}; \Z(p, q))$. Each triple in the entries denotes $(A, B, C)$ where $A, B, C$ are defined in Lemma \ref{ABC}.

\begin{center}
$
\begin{array}{|c | c | c| c |c|c|c|}
\hline
(p, q) \backslash k & 1 & 2     & 3         & 4                           & 5         & 6\\
\hline
(1, 1) & (0, 1, 0)  & (4, 4, 0) & (8, 4, 0) & (8, 2, 0)                   & (4, 0, 0) & (0, 0, 0)\\
\hline
(1, 2) & (0, 1, 0) & (1, 4, 2)  & (4, 7, 2) & \mbox{if rkT=1}, (6, 6, 1)  & (4, 2, 0) & (0, 0, 0)\\
       &           &            &           & \mbox{if rkT=2}, (7, 6, 0)  & (4, 1, 0) & (0, 0, 0)\\
\hline
(1, 3) & (0, 1, 0) & (0, 4, 2) & (0, 8, 2)  & \mbox{if rkT=1}, (1, 10, 0) & (2, 7, 1) & (0, 2, 0) \\
       &           &           &            & \mbox{if rkT=2}, (2, 10, 0) & (2, 6, 2) & (0, 2, 0)  \\
\hline
(1, 4) & (0, 1, 2) & (0, 4, 2) & (0, 8, 2)  & (0, 10, 1)                  & (0, 8, 0) & (0, 4, 0)  \\
\hline
(2, 2) & (0, 1, 0) & (4, 4, 4) & (0, 7, 7)  & \mbox{if rkT=1 and N(T)}=1, (5, 10, 2)     & (4, 3, 1) & (0, 0, 0)\\
       &           &           &            & \mbox{if rkT=1 and N(T)}\geq 2, (4, 10, 2) & (4, 4, 0) & (0, 0, 0)\\
       &           &           &            & \mbox{if rkT=2}, (6, 10, 0)                & (4, 2, 0) & (0, 0, 0)\\
\hline
(2, 3) & (0, 1, 0) & (0, 4, 4)  & (0, 8, 7) & \mbox{if rkT=1}, (2, 10, 5) & (2, 6, 2) & (0, 2, 0) \\
       &   &     &                          & \mbox{if rkT=2}, (4, 10, 4) & (2, 4, 3) & (0, 2, 0) \\
\hline
(2, 4) & (0, 1, 0) & (0, 4, 4)  & (0, 8, 7)  & \mbox{if rkT=1}, (0, 10, 6) & (0, 8, 2)& (0, 4, 0)  \\
       &   &     &                           & \mbox{if rkT=2}, (0, 10, 5) & (0, 8, 1)& (0, 4, 0)  \\
\hline
(3, 3) & (0, 1, 0) & (0, 4, 4)  & (0, 8, 8)  & \mbox{if rkT=1}, (2, 10, 8) & (0, 6, 6)& (0, 4, 0)   \\
       &           &            &            & \mbox{if rkT=2}, (4, 10, 8) & (0, 4, 8)& (0, 4, 0)   \\
\hline
(3, 4) & (0, 1, 0) & (0, 4, 4)  & (0, 8, 8)  & (0, 10, 9)  & (0, 8, 6)   & (0, 4, 2)\\
\hline
(4, 4) & (0, 1, 0) & (0, 4, 4)  & (0, 8, 8)  & (0, 10, 10) & (0, 8, 8)   & (0, 4, 4)\\
\hline
\end{array}$
\end{center}

Note that for ${\textbf{\mbox{t}}}$ in class (iii), the rank of $T$ is always 2. So the ABC cohomology of such $X_{\textbf{\mbox{t}}}$ does not give
a finer classification than Nakamura's classification. But for ${\textbf{\mbox{t}}}$ in class (ii), the ABC cohomology may be different for $T$ with different rank. We summarize our
observation in the following. This refinement is not same as Angella's refinement of Nakamura's classification.

\begin{corollary}
We may subdivide class (ii) into 3 subclasses:
\begin{description}
\item[subclass ii.1]: rank $T$=1 and $N(T)=1$;
\item[subclass ii.2]: rank $T$=1 and $N(T)\geq 2$;
\item[subclass iii.3]: rank $T$=2.
\end{description}
\end{corollary}

\bibliographystyle{amsplain}

\begin{thebibliography}{1}
\bibitem{A} D. Angella, The cohomologies of the Iwasawa manifold and of its small deformations, J. Geom. Anal., 23(2013), 1355-1378.

\bibitem{BB} C. B$\ddot{a}$r, C. Becker, Differential Chracters, Lecture Notes in Mathematics 2112, Springer, 2014.

\bibitem{B} U. Bunke, Differential cohomology, arXiv:1208.3961.

\bibitem{BKS} U. Bunke, M. Kreck, T. Schick, A geometric description of differential cohomology, Ann. Math. Blaise Pascal 17 (2010), no. 1, 1-16.

\bibitem{BNV} U. Bunke, T. Nikolaus, M. V$\ddot{o}$lkl, Differential cohomology theories as sheaves of spectra, arXiv:1311.3188.

\bibitem{BS} U. Bunke and T. Schick, Differential K-theory:a survey, Global diff. geom., 303-357, Springer Proc. Math., 17, Springer, Heidelberg, 2012.

\bibitem{CP} B. Cenkl and R. Porter, Cohomology of Nilmanifolds, Algebraic Topology-Rational Homotopy, Proceedings of a Conference held in Louvain-la-Neuve, May 2-6, 1986, edited by Y. Felix, LNM1318, Springer-Verlag, 1988, 73-86.

\bibitem{CS} J. Cheeger and J. Simons, Differential character and geometric invariants, Geometry and Topology, LNM1167, Springer-Verlag, New York, 1985, 50-80.

\bibitem{F} D.S. Freed, Dirac charge quantization and generalized differential cohomology, Surv. Diff. Geom. VII (2000) 129-194.

\bibitem{H} N. Hao, PhD Thesis, D-Bar spark sheory and Deligne cohomology, Stony Brook University, 2007.

\bibitem{HLZ} R. Harvey, B. Lawson and J. Zweck, The de Rham-Federer theory of differential characters and character duality,
Amer. J. Math., 125(2003), 791-847.

\bibitem{HL1} R. Harvey and B. Lawson, D-bar sparks, Proc. LMS, 3, 97(2008), 1-30.

\bibitem{HL2} R. Harvey and B. Lawson, From sparks to grundles--differential characters,  Comm. Anal. Geom., 1, 14(2006), 25-58.

\bibitem{HS} M.J. Hopkins, I.M. Singer, Quadratic functions in geometry, topology, and M-theory, JDG, 70(2005), no. 3, 329-452.

\bibitem{HZ} F.R Harvey and J. Zweck, Divisors and Euler sparks of atomic sections, Indiana Univ. Math. J., 50, (2001), 243-298.

\bibitem{LP} L. A. Lambe and S. B. Priddy, Cohomology of nilmanifolds and torsion-free, nilpotent groups, Trans. of AMS, 273(1), 1982, 39-55.

\bibitem{Nak} I. Nakamura, Complex parallelisable manifolds and their small deformations, JDG 10 (1975), no. 1, 85-112.

\bibitem{Nom} K. Nomizu, On the cohomology of compact homogeneous spaces of nilpotent Lie groups, Ann. of Math. (2) 59 (1954), no. 3, 531-538.

\bibitem{S} M. Schweitzer, Autour de la cohomologie de Bott-Chern, arXiv:0709.3528.

\bibitem{Z} J. Zweck, Stiefel-Whitney sparks, Houston J. of Math., 27(2), 2001, 325-351.
\end{thebibliography}

\end{document}